\newtheorem{theorem}{Theorem}[section]
\theoremstyle{plain}
\newtheorem{corollary}[theorem]{Corollary}
\newtheorem{defi}[theorem]{Definition}
\newtheorem{lemma}[theorem]{Lemma}
\newtheorem{prop}[theorem]{Proposition}
\newtheorem{remark}[theorem]{Remark}
\numberwithin{equation}{section}
\def\dprod{\,{\scriptscriptstyle\bullet}\,}
\def\nuhat{\widehat{\nu}}
\def\muhat{\widehat{\mu}}
\def\dist{{\rm dist}}
\def\Hk{{\mathcal H}}
\def\Ok{{\mathcal O}}
\def\Nk{{\mathcal N}}
\def\Dh{\dim_{\rm H}}
\def\bzeta{{\mathbf \zeta}}
\def\half{\frac{1}{2}}
\newcommand{\lam}{\lambda}
\newcommand{\gam}{\gamma}
\newcommand{\sig}{\sigma}
\def\b0{{\bf 0}}
\newcommand{\R}{{\mathbb R}}
\newcommand{\Q}{{\mathbb Q}}
\newcommand{\Z}{{\mathbb Z}}
\newcommand{\C}{{\mathbb C}}
\def\N{{\mathbb N}}
\def\bp{{\bf p}}
\def\bz{{\bf z}}
\def\wh{\widehat}
\def\Lk{{\mathcal L}}
\def\Mk{{\mathcal M}}
\def\Dk{{\mathcal D}}
\def\T{{\mathbb T}}
\def\bx{{\mathbf x}}
\def\be{{\mathbf e}}
\def\beq{\begin{equation}}
\def\eeq{\end{equation}}
\def\wt{\widetilde}
\newcommand{\Ek}{{\mathcal E}}
\newcommand{\eps}{{\varepsilon}}
\def\ov{\overline}
\def\Span{{\rm Span}}
\def\ve1{\vec{1}}
\def\Tk{{\mathcal T}}
\def\Ak{{\mathcal A}}
\def\pb{\boldsymbol{p}}
\def\balpha{\boldsymbol{\alpha}}
\def\thb{\boldsymbol{\theta}}
\def\Dsc{{\mathscr D}}
\def\what{\widehat}
\def\re{{\rm Re}}
\def\im{{\rm Im}}
\def\thet{\vartheta}
\def\scr{\scriptscriptstyle \R/\Z}
\def\nula{\nu_\lam}
\def\ben{\begin{enumerate}}
\def\een{\end{enumerate}}
\begin{document}

\title[Fourier decay]
{Fourier decay and absolute continuity 
for typical homogeneous  self-similar measures in $\R^d$ for $d\ge 3$}

\author[Boris Solomyak]{Boris Solomyak}

\address{Boris Solomyak, Department of Mathematics, Bar-Ilan University, Ramat Gan, 5290002 Israel}
\email{bsolom3@gmail.com}

\thanks{Supported in part by the Israel Science Foundation grant \#1647/23.}

\date{\today}

\begin{abstract}
We consider  iterated function systems (IFS) in $\R^d$ for $d\ge 3$ 
of the form $\{f_j(x) = \lam \Ok x  + a_j\}_{j=0}^m$, with $a_0=0$ and $m\ge 1$. Here $\lam\in (0,1)$ is the contraction ratio and
$\Ok$ is an orthogonal matrix. Given a positive probability vector $\pb$, there is a unique invariant (stationary) measure for the IFS, called (in this case) a homogeneous self-similar measure, which we denote $\mu(\lam \Ok, \Dk, \pb)$, where $\Dk = \{a_0,\ldots,a_m\}$ is the set of ``vector digits''.
We obtain two  results on Fourier decay for such measures. First we show that if $\Dk$ spans $\R^d$,  then for every fixed $\Ok$ and
$\pb$ the measure $\mu(\lam \Ok, \Dk, \pb)$
has power Fourier decay (equivalently, positive Fourier dimension) for all but a zero-Hausdorff dimension set of $\lam$. In our second result we
do not impose any restrictions on $\Dk$, other than the necessary one of affine irreducibility, and obtain power Fourier decay for almost all
homogeneous self-similar measures; however, only for even $d\ge 4$. 
Combined with recent work of Corso and Shmerkin \cite{CoShm24}, these results imply absolute continuity for almost all self-similar
measures under the same assumptions, in the super-critical parameter region.
\end{abstract}

\maketitle

\thispagestyle{empty}

\section{Introduction and statement of results}

For a finite positive Borel measure $\mu$ on $\R^d$, consider the Fourier transform
$$
\wh\mu(\xi) = \int_{\R^d} e^{-2\pi i \langle \xi, x\rangle}\,d\mu(x).
$$
The measure is called {\em Rajchman} if $\wh\mu(\xi)$ tends to zero at infinity. By the Riemann-Lebesgue Lemma, every absolutely continuous measure
is Rajchman, but absolute continuity is not necessary. It is a delicate and much studied question to decide which singular measures are Rajchman.
Assuming they are, it is important to obtain specific quantitative bounds for the rate of decay at infinity. 
Let $\Mk(\R^d)$ be the set of finite positive measures in $\R^d$
For $\gam>0$ consider
$$
\Dsc_d(\gam) = \bigl\{\nu\in \Mk(\R^d):\ \left|\widehat{\nu}(\xi)\right| = O_\nu(|\xi|^{-\gam}),\ \ |\xi|\to \infty\bigr\},
$$
and denote $\Dsc_d= \bigcup_{\gam>0} \Dsc_d(\gam)$. A measure $\nu$ is said to have {\em power Fourier decay} if $\nu\in \Dsc_d$.
This is equivalent to having a positive {\em Fourier dimension}, see \cite{Mattila:Fourier-book}.
Further, let
$$
\Dsc_{d,\log} = \bigl\{\nu\in \Mk(\R^d):\ \exists\,\alpha>0,\ \left|\widehat{\nu}(\xi)\right| = O_\nu\bigl(\bigl|\log|\xi|\bigr|^{-\alpha}\bigr),\ \ |\xi|\to \infty\bigr\}.
$$
Such measures are said to have {\em polylogarithmic Fourier decay}. Of course, other decay rates may occur as well.

There is on-going very intensive research of the Fourier decay phenomena and its applications, for
a broad class of measures which may be termed ``fractal''. We are not going to give a comprehensive literature review: a ``sample'' of recent papers, which includes two surveys, is \cite{ARW23,ARW24,Sahlsten23,BaBa24,BaKhaSa24,LPS25}; many more references can be found there. 
However, much of the recent progress is for measures coming from non-linear dynamics, in particular, for stationary (invariant) measure of non-linear iterated function systems (IFS). Here we focus exclusively on the linear case.

A measure $\mu$ is called {\em self-affine}  if it is the invariant measure for a self-affine IFS $\{f_j\}_{j=0}^m$, with $m\ge 1$, where $f_j(x) = A_jx + a_j$, the matrices $A_j:\R^d\to \R^d$ are invertible linear contractions (in some norm) and $a_j\in \R^d$ are   ``digit'' vectors.
This means that for some probability vector $\pb = (p_j)_{j\le m}$ holds
\beq \label{ssm1}
\mu = \sum_{j=0}^m p_j (\mu\circ f_j^{-1}).
\eeq
The IFS is {\em self-similar} if all $A_j$ are contracting similitudes, that is, $A_j = \lam_j \Ok_j$ for some $\lam_j\in (0,1)$ and orthogonal matrices $\Ok_j$. 
The IFS and the corresponding invariant measure are called {\em homogeneous self-affine} (respectively, {\em homogeneous self-similar}) 
if $A_j=A$ for all $j\le m$, and $A$ is a general linear contracting map (respectively, a contracting similitude). Our  results are on self-similar homogeneous IFS, and we
review the relevant history with some detail; however, we do not intend to duplicate the excellent survey by Sahlsten \cite{Sahlsten23}, which is much more exhaustive.
Our main motivation for showing the power Fourier decay is to study {\em absolute continuity} of self-similar measures, but 
it has many other applications which we do not discuss here, see, e.g.,  \cite{Sahlsten23}.

\subsection{Historical background} We start with the most basic case: $d=1$, $m=2$, a homogeneous self-similar measure. Up to an affine-linear conjugacy, we obtain a 1-parameter family of IFS $\{\lam x, \lam x+1\}$. The resulting self-similar measures are known as {\em Bernoulli convolutions}; they were
extensively studied since the 1930's. Denote by $\nula^p$ the Bernoulli convolution, corresponding to the probabilities $(p,1-p)$. In the ``classical case''
$p=1/2$ we omit the superscript $p$.
Erd\H{o}s \cite{Erd1} proved that $\nu_\lam$ is not Rajchman when $\theta=1/\lam$ is a {\em PV (or Pisot)} number, that is, 
an algebraic integer greater than one, all of whose Galois conjugates lie inside the unit circle. Salem \cite{Salem} showed that if
$1/\lam$ is not a Pisot number, then  $\nuhat_\lam$ does vanish at infinity. What about decay rates?
Erd\H{o}s \cite{Erd2}  proved that for any compact interval $J\subset (0,1)$ there exists $\alpha=\alpha(J)>0$ such that
$\nu_\lam\in \Dsc_1(\alpha)$ for a.e.\ $\lam\in J$. In a short but illuminating expository note, Kahane \cite{kahane} indicated that Erd\H{o}s' argument actually gives that $\nu_\lam\in \Dsc_1$ for all $\lam\in (0,1)$ in the complement  of an exceptional set of zero Hausdorff dimension. (This technique became known as the {\em Erd\H{o}s-Kahane argument}, which turned out to be useful in other contexts as well.)
All the above results hold for any
$p\in (0,1)$; moreover,
 the Erd\H{o}s-Kahane argument works for any family of homogeneous IFS on the line, with the contraction rate as a parameter. As it often happens with results of ``almost sure'' kind, few specific $\lam$ are known, all algebraic, for which $\nu_\lam$ has power Fourier decay.
Dai, Feng, and Wang \cite{DFW} proved this for the classical Bernoulli convolution $\nula$, when $\theta=\lam^{-1}$ is a  {\em Garsia number}, i.e.,
an algebraic integer, all of whose conjugates have modulus greater than 1, and the constant term of its minimal polynomial is $\pm 2$.
Recently, Streck \cite{Streck23} generalized this to a larger class of homogeneous IFS, still with a very special
algebraic contraction ratio.

What about weaker decay rates for explicit $\lam$? Kershner \cite{Kershner} (see also \cite{Dai}) proved that $\nula\in \Dsc_{1,\log}$ for a rational
contraction rate $\lam = p/q$, with $1  < p< q$ (note that for $\lam = 1/q$, with $q\ge 3$, the measure $\nula$ is non-Rajchman; in fact, these are the
simplest Pisot Bernoulli convolutions). In a joint work with Bufetov \cite[Corollary A.3]{BuSo14} it was shown that if $\lam^{-1}$ is an 
algebraic integer which has at least one conjugate outside the unit circle, then $\nula\in \Dsc_{1,\log}$ as well; this was extended by Gao and Ma \cite{GaoMa17} to an arbitrary set of digits.

The case of a {\em Salem number} $\theta = \lam^{-1}$ is very interesting, but remains mysterious. This is, by definition, an algebraic integer which has all of its conjugates satisfying $|\theta_j|\le 1$, but not a Pisot number.
It is known that $\nula\notin \Dsc_1$ in the Salem case, see
\cite{kahane} (and \cite[Section 5]{PSS00} for more details).
Curiously, it is not known whether the property $\nula\in \Dsc_1(\alpha)$ is 
topologically generic (of second category) in $(0,1)$. In fact, if it is, then the Salem numbers do not accumulate to one (which would confirm a
well-known conjecture). This follows from the fact that the set $\{\lam\in (0,1):\ \nula\notin \Dsc_1\}$ is $G_\delta$, see \cite[Lemma 5.4]{PSS00}, and
the implication $\nula\notin \Dsc_1 \implies \ \nu_{\lam^2} \notin \Dsc_1$. 

 Note that all the delicate arithmetic phenomena, having to do with the contraction ratio $\lam$, disappear if we introduce dependence on a translation parameter. For instance, for the family of IFS $\{\lam x, 1+\lam x, a+\lam x\}$, with any fixed $\lam\in (0,1)$ and any probability vector
$\pb>0$, the corresponding self-similar measure belongs to $\Dsc_1$ for all $a\in \R\setminus E$, where $\Dh(E)=0$; see 
\cite[Proposition 3.1]{ShSol16}.

Next we briefly discuss absolute continuity of Bernoulli convolutions and more general homogeneous self-similar measures on $\R$.		
Using the convolution structure of $\nula$ and his ``a.e.\ $\lam$'' power Fourier decay result, Erd\H{o}s \cite{Erd2} deduced that $\nula$
is absolutely continuous, even of arbitrarily high smoothness, fixed
in advance, for Lebesgue-a.e.\ $\lam$ in a sufficiently small neighborhood of 1. A different, {\em transversality method}, was used to confirm
absolute continuity of $\nula$ for Lebesgue-a.e.\ $\lam\in (1/2,1)$ in \cite{Solomyak95} (see also \cite{PeresSolomyak96}), and to estimate the Hausdorff dimension of the exceptional set \cite{PS00}. More recently, following breakthrough results of Hochman \cite{Hochman14} on the dimension of
self-similar measures, Shmerkin \cite{Shmerkin14}
showed that this exceptional set has Hausdorff dimension zero, invoking the classical Erd\H{o}s-Kahane statement on power Fourier decay and a 
beautiful ``convolution lemma,'' which we also use in this paper:

\begin{lemma}[{Shmerkin \cite{Shmerkin14}}] \label{lem:Shm}
Suppose that $\mu$ and $\nu$ are finite measures in $\R^d$, such that $\Dh(\mu) = d$ and $\nu\in \Dsc_d$. Then $\mu*\nu$ is absolutely continuous with
respect to $\Lk^d$.
\end{lemma}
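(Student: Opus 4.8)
The plan is to exploit the classical fact that a measure with full Hausdorff dimension $d$ cannot be too concentrated on any set of small Fourier-analytic size, and to combine this with the power decay of $\widehat{\nu}$ to show that $\widehat{\mu*\nu} = \widehat{\mu}\cdot\widehat{\nu}$ lies in $L^2(\R^d)$; Plancherel then gives that $\mu*\nu$ has an $L^2$ density with respect to $\Lk^d$, hence is absolutely continuous. The key technical input on the side of $\mu$ is an $L^q$-dimension (or entropy) estimate: since $\Dh(\mu)=d$, for every $\eps>0$ one has, for all small dyadic scales $2^{-n}$,
\beq
\sum_{Q}\mu(Q)^2 \le C_\eps\, 2^{-n(d-\eps)},
\eeq
where $Q$ ranges over the dyadic cubes of side $2^{-n}$. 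Equivalently, in frequency terms, the ``frequency energy'' of $\mu$ localized to the annulus $|\xi|\sim 2^n$ satisfies $\int_{|\xi|\sim 2^n}|\widehat{\mu}(\xi)|^2\,d\xi \lesssim_\eps 2^{n(d-1+\eps)}$ — this is the standard Fourier-side reformulation of the $L^2$-flatness of $\mu$, obtained by comparing the $\ell^2$ mass of the cube-values with a smoothed version and using that $\mu$ has no atoms of relative mass more than $2^{-n(d-\eps)}$ in a cube.

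Granting this, I would partition frequency space into dyadic annuli $A_n=\{|\xi|\sim 2^n\}$ and estimate
\beq
\int_{A_n}|\widehat{\mu*\nu}(\xi)|^2\,d\xi = \int_{A_n}|\widehat{\mu}(\xi)|^2|\widehat{\nu}(\xi)|^2\,d\xi \le \Bigl(\sup_{\xi\in A_n}|\widehat{\nu}(\xi)|^2\Bigr)\int_{A_n}|\widehat{\mu}(\xi)|^2\,d\xi.
\eeq
Using $\nu\in\Dsc_d(\gam)$ we bound the supremum by $C\,2^{-2\gam n}$, and using the dimension estimate for $\mu$ we bound the integral by $C_\eps 2^{n(d-1+\eps)}$. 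Choosing $\eps<2\gam$ makes the product summable: $\sum_n 2^{-2\gam n}2^{n(d-1+\eps)}<\infty$ is false in general — so in fact the right normalization is to use the $L^2$ ball estimate $\int_{|\xi|\le 2^n}|\widehat{\mu}|^2 \lesssim_\eps 2^{n(d-\eps)}$ (valid because $\Dh(\mu)=d$ forces the energy integral $\int|\widehat\mu(\xi)|^2|\xi|^{-s}\,d\xi<\infty$ for all $s<d$, hence $\int|\widehat\mu|^2\cdot \One_{|\xi|\le R}\lesssim R^{d-s}$), combine it with $|\widehat\nu(\xi)|\lesssim |\xi|^{-\gam}$ via a dyadic summation, and conclude $\int_{\R^d}|\widehat\mu(\xi)|^2|\widehat\nu(\xi)|^2\,d\xi<\infty$ as soon as $\gam>0$. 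That $L^2$-integrability of $\widehat{\mu*\nu}$ is exactly the assertion that $\mu*\nu\in L^2(\R^d)$.

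The main obstacle — and the conceptual heart of the lemma — is making precise and correctly normalizing the passage from ``$\Dh(\mu)=d$'' to a genuinely \emph{quantitative} Fourier-energy bound for $\mu$ on dyadic balls, with an arbitrarily small loss $\eps$ in the exponent. Full Hausdorff dimension is \emph{not} the same as finite $d$-energy (the Frostman/energy characterization gives finiteness only for exponents strictly below $d$), so one has to extract the ``$d-\eps$ for all $\eps$'' statement carefully, either directly from a Frostman-type exponent via the energy integral, or — as is cleanest — by working scale-by-scale with the entropy/$L^2$-dimension of $\mu$ and a Littlewood–Paley decomposition, so that the error $\eps$ at each scale is absorbed before summing. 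Once this quantitative step is in hand, the rest is the routine Plancherel estimate sketched above.
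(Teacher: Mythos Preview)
The paper does not prove this lemma; it is quoted from \cite{Shmerkin14} and used as a black box. So there is no ``paper's own proof'' to compare with, and I can only evaluate your argument on its merits.

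There is a genuine gap. Your entire scheme rests on passing from the hypothesis $\Dh(\mu)=d$ to a global quantitative bound on $\mu$ itself --- either the box-count estimate $\sum_{Q\in D_n}\mu(Q)^2\le C_\eps 2^{-n(d-\eps)}$, or equivalently the energy bound $I_s(\mu)<\infty$ for all $s<d$, or the dyadic Fourier bound $\int_{|\xi|\le R}|\widehat\mu|^2\lesssim_\eps R^{\eps}$. None of these follow from $\Dh(\mu)=d$. Full Hausdorff dimension controls the \emph{essential infimum} of the lower local dimension, but gives no uniformity in the scale at which the local-dimension bound kicks in, and it is exactly that uniformity you need for an energy estimate. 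Concretely: in $\R$, let $\mu=\sum_{k\ge 1}\mu_k$, where $\mu_k$ is the uniform probability measure on an interval $I_k$ of length $2^{-k^2}$, scaled to have mass $2^{-k}$, with the $I_k$ pairwise disjoint. Then $\mu$ is a finite, compactly supported, absolutely continuous measure, so $\Dh(\mu)=1$. But $I_s(\mu)\ge\sum_k I_s(\mu_k)\asymp\sum_k 2^{-2k+sk^2}=\infty$ for every $s>0$, and likewise $\sum_{Q\in D_n}\mu(Q)^2\asymp 2^{-2k}$ when $2^{-n}\asymp 2^{-k^2}$, which is far larger than $2^{-n(1-\eps)}$. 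So the ``key technical input'' you invoke is simply false for general $\mu$, and the Littlewood--Paley/entropy reformulation you suggest at the end does not rescue it: those are restatements of the same false inequality.

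The fix (and this is what Shmerkin actually does) is to \emph{restrict} $\mu$ before running the Fourier argument. From $\Dh(\mu)=d$ one gets, for each $\eps>0$, a Borel set $A_\eps$ with $\mu(\R^d\setminus A_\eps)<\eps$ such that the restriction $\mu|_{A_\eps}$ satisfies a genuine Frostman bound $\mu|_{A_\eps}(B(x,r))\le C_\eps r^{d-\eps}$ (this is a standard Egorov-type extraction from the a.e.\ local-dimension bound). Then $\mu|_{A_\eps}$ \emph{does} have finite $s$-energy for $s<d-\eps$, your $L^2$ computation goes through for $\mu|_{A_\eps}*\nu$ once $\eps<2\gam$, and hence $\mu|_{A_\eps}*\nu\ll\Lk^d$. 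Finally, decomposing $\mu*\nu=(\mu|_{A_\eps}*\nu)+(\mu|_{A_\eps^c}*\nu)$ shows that the singular part of $\mu*\nu$ has total mass at most $\eps\,\nu(\R^d)$; letting $\eps\to 0$ gives absolute continuity. Note that this argument does \emph{not} produce an $L^2$ density for $\mu*\nu$ in general (and indeed one should not expect it to).
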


Many of the above results, especially those for ``typical'' parameters, extend to arbitrary homogeneous self-similar measures on $\R$, with various kinds
of parameter dependence, see \cite{Shmerkin14,ShSol16}. 
Until a few years ago,
the only explicit parameters $\lam$, for which $\nula$ is known to be absolutely continuous, were the reciprocals of Garsia numbers.  More recently, 
in a continuation of the line of research pioneered by Hochman \cite{Hochman14}, but with many further delicate and technical innovations, 
absolutely continuity was established
 for a much larger class of examples,
all algebraic and satisfying certain arithmetic conditions, due to Varj\'u \cite{Varju1} and Kittle \cite{Kittle}.

\smallskip

In higher dimensions  issues of irreducibility arise. Following \cite{Hochman}, we say that the IFS is {\em affinely irreducible} if the attractor is not contained in a proper affine subspace of $\R^d$. It is easy to see that this is a necessary condition for the self-affine measure to be Rajchman, so this will always be our assumption. The IFS is called {\em linearly irreducible} if the linear maps $A_j$ do not have a common proper invariant subspace.

We next consider self-similar homogeneous IFS in $\R^2$. Affine irreducibility means that the attractor is not contained in a line.
There are 2 cases:

\begin{enumerate}
\item {\em Linearly reducible case.}
If $A_j = A = \lam\cdot I$, for $\lam\in (-1,1)$, then power Fourier decay, for all but a zero Hausdorff dimension set of exceptions,
 easily follows by the classical 1-dimensional argument. However, to deduce ``typical'' 
absolute continuity via Shmerkin's Lemma \ref{lem:Shm} is problematic, except in some very
special symmetric cases, because Hochman's higher-dimensional results
\cite{Hochman} require linear irreducibility. These issues will be clarified below for self-similar homogeneous IFS in $\R^d$ for any $d\ge 2$.

\smallskip

\item  {\em Linearly irreducible case.}
View $\R^2\cong \C$ as the complex plane, and let $Az = \lam z$, where $\lam$ is a non-real complex number, with $|\lam|<1$. In this setting, power Fourier
decay was obtained for all $\lam$ outside a zero-Hausdorff dimensional set of exceptional 
parameters in a joint work with Shmerkin \cite{ShSol16b}. Moreover, Hochman's  \cite[Theorem 1.5]{Hochman} applies here, so we obtained absolute 
continuity of the self-similar measure for a.e.\ parameter in the super-critical region, more precisely, outside of a (different) 
zero-Hausdorff dimensional set of parameters, see \cite[Theorem B]{ShSol16b}.
\end{enumerate}

The next step is to consider homogeneous self-similar IFS in dimensions $\ge 3$, and this is the topic of the current paper. One of the difficulties is that such
a system is {\em never} linearly irreducible.

\smallskip

We only mention the case of {\em non-homogeneous self-similar IFS} in passing, without going into details.
For  questions on  Fourier decay  of non-homogeneous self-similar measures on $\R$, the reader is referred to 
\cite{AHW21,Bremont,LS1,Sol_Fourier, VarYu} and references therein. Absolute continuity of such measures, for almost all relevant parameters, was established in \cite{SSS}, and this was
extended to the planar case in a joint work with \'Spiewak \cite{SoSp23}. Lindenstrauss and Varj\'u \cite{LinVar16} proved that in dimension $d\ge 3$, if  the rotation parts of the similarities generate random walk with a spectral gap, then the self-similar measure is absolutely continuous, assuming that all the 
contraction rations are sufficiently close to one. In the survey \cite{Sahlsten23} Sahlsten showed how the methods of \cite{LinVar16}
can be used to obtain power Fourier decay in their setting (e.g., for algebraic rotation parts of the similarities in $\R^d$, $d\ge 3$, when the spectral gap hypothesis holds)  for any common contraction ratio $0 < \lam <1$, see \cite[Theorem 4.3]{Sahlsten23}.
Recently a significant
advance was achieved by Kittle and Kogler \cite{KiKog24}, including 
first explicit examples of absolutely continuous non-homogeneous self-similar measures in dimensions 1 and 2, and an extension of \cite{LinVar16}.

\subsection{Statement of new results}
Denote the digit set by $\Dk:=\{a_0,\ldots,a_m\}$.
By a conjugation with a translation, we can always assume that $a_0=0\in \Dk$. In this case 
affine irreducibility is equivalent to the digit set $\Dk$  being a {\em cyclic family} for $A$, that is, $\R^d$ being the smallest $A$-invariant subspace containing $\Dk$.
Denote the homogeneous self-similar measure, determined by the contraction similarity map $A$, digit set $\Dk$, and the
probability vector $\pb$, by $\mu(A,\Dk,\pb)$. We will write $\pb>0$ if all $p_j>0$.
The special feature of a homogeneous self-similar (and a self-affine) measure is that it can be expressed as an infinite convolution product
\beq \label{eq-conv}
\mu(A,\Dk,\pb) = \Bigl(\Asterisk\prod\limits_{n=0}^\infty\Bigr) \sum_{j=0}^m p_j \delta_{A^n a_j}.
\eeq
For every $\pb>0$ it is supported on the attractor (self-similar set) 
$$
K_{A,\Dk}:= \Bigl\{x\in \R^d: \ x = \sum_{n=0}^\infty A^n b_n,\ b_n \in \Dk\Bigr\}.
$$

\begin{theorem} \label{th:new1}
For $d\ge 1$, let $A = \thet^{-1} \Ok$, where $\thet>1$ and $\Ok\in O(d,\R)$. Let $\Dk = \{0,a_1,\ldots,a_m\}\subset \R^d$ be any set of digits in $\R^d$, such that $\Span(\Dk) = \R^d$ (so that $m\ge d$). Let $\pb$ be a positive probability vector. 
Then the corresponding self-similar measure $\mu(\thet^{-1}\Ok, \Dk, \pb)$ belongs to $\Dsc_d$ for all $\thet>1$ outside of a set of Hausdorff dimension zero.
\end{theorem}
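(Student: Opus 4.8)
The plan is to implement a $d$-dimensional version of the Erd\H{o}s--Kahane argument (in the spirit of \cite{kahane,Shmerkin14,ShSol16b}). By the infinite convolution formula \eqref{eq-conv},
\[
\wh\mu(\xi)=\prod_{n=0}^\infty \phi(\eta_n),\qquad \eta_n:=\thet^{-n}\Ok^{-n}\xi,\qquad \phi(\eta):=\sum_{j=0}^m p_j e^{-2\pi i\langle\eta,a_j\rangle},
\]
so $|\eta_n|=\thet^{-n}|\xi|$, $|\phi|\le 1$ everywhere, and $|\phi(\eta)|=1$ exactly when $\langle\eta,a_j\rangle\in\Z$ for all $j$. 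Fix a basis $\{a_{j_1},\dots,a_{j_d}\}\subseteq\Dk$ of $\R^d$ and let $\Lam_0=\{v:\langle v,a_{j_i}\rangle\in\Z,\ i=1,\dots,d\}$ be the dual lattice; it is full rank, with shortest nonzero vector $\rho_0>0$. The basic dichotomy is: there is $\eps(\delta_0)\to 0$ as $\delta_0\to 0^+$ such that $|\phi(\eta)|>1-\delta_0$ implies $\|\langle\eta,a_j\rangle\|_{\R/\Z}\le\eps(\delta_0)$ for every $j$, and hence $\dist(\eta,\Lam_0)\le C\eps(\delta_0)$ for a constant $C=C(\Dk)$; fix $\delta_0$ so small that $C\eps(\delta_0)<\rho_0/4$, and write $\eps$ for $C\eps(\delta_0)$. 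Since the exceptional set is a subset of $(1,\infty)$, it is enough to show that for each fixed $\delta\in(0,1)$ and $T>1$ the set of $\thet\in[1+\delta,T]$ with $\mu(\thet^{-1}\Ok,\Dk,\pb)\notin\Dsc_d$ has Hausdorff dimension $0$, and then take a countable union.

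Fix a small $\gam>0$ and let $U_k=U_k(\gam,\delta,T)$ be the set of $\thet\in[1+\delta,T]$ for which there is a $\xi$ with $2^k\le|\xi|<2^{k+1}$ and $|\wh\mu(\xi)|\ge|\xi|^{-\gam}$. If $\mu_\thet\notin\Dsc_d$ then, since $|\wh\mu|$ is bounded, one gets $\thet\in\limsup_k U_k$ for every $\gam>0$; so it suffices to bound $\Dh(\limsup_k U_k)$ by a quantity tending to $0$ as $\gam\to 0$. For $\thet\in U_k$ with witness $\xi$, set $N:=\lfloor k\log 2/\log\thet\rfloor$, so $\thet^N\le 2^k$ and $|\eta_n|\ge 1$ for $0\le n\le N$. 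Call $n\in[1,N]$ \emph{good} if $|\phi(\eta_n)|>1-\delta_0$ and \emph{bad} otherwise. From $|\wh\mu(\xi)|\le(1-\delta_0)^{\,b}$, where $b$ is the number of bad $n\in[1,N]$, and $|\wh\mu(\xi)|\ge 2^{-(k+1)\gam}$, one obtains $b\le\gam' N$ with $\gam'=\gam'(\gam,\delta,T)\to 0$ as $\gam\to 0$. For each good $n$ the dichotomy gives a unique $v_n\in\Lam_0$ with $|\eta_n-v_n|\le\eps$, and $v_n\ne 0$ since $|\eta_n|\ge 1$. I encode the pair $(\thet,\xi)$ by the triple: the integer $N$; the set $G$ of good indices in $[1,N]$; and the sequence $(v_n)_{n\in G}$.

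The next step is to count, over all $\thet\in[1+\delta,T]$ and all witnesses, the encodings arising at scale $k$. The integer $N$ has $O_{\delta,T}(k)$ possible values; $G$ misses at most $\gam'N$ of the $N$ indices, so there are at most $N^{O(1)}2^{H(\gam')N}$ choices for it ($H$ the binary entropy). Building $(v_n)_{n\in G}$ from the largest good index $n_J$ downwards: since $n_J\ge(1-\gam')N$ one has $|v_{n_J}|\le\thet^{-n_J}|\xi|+\eps\le C(T)\,2^{\gam' k}$, hence $\le C(T)\,2^{d\gam' k}$ lattice choices; and if $n_i<n_{i+1}$ are consecutive elements of $G$ with gap $g_i$, the identity $\eta_{n_i}=\thet^{g_i}\Ok^{g_i}\eta_{n_{i+1}}$ forces $|v_{n_i}-\thet^{g_i}\Ok^{g_i}v_{n_{i+1}}|\le 2\thet^{g_i}\eps$, which determines $v_{n_i}$ uniquely from $v_{n_{i+1}}$ when $2\thet^{g_i}\eps<\rho_0/2$ and otherwise leaves $O\big((\thet^{g_i}\eps)^d\big)$ choices; as $\sum_i(g_i-1)\le\gam'N$, the product of all such ambiguities is $e^{O_{\delta,T}(\gam')k}$. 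Thus the number of encodings at scale $k$ is at most $e^{g(\gam)k}$ with $g(\gam)=O\big(H(\gam')+\gam'\big)\to 0$. Finally, each encoding confines $\thet$ to a short interval: the first $2\lfloor\gam'N\rfloor+2$ indices form $\lfloor\gam'N\rfloor+1$ disjoint consecutive pairs, not all spoiled by the $\le\gam'N$ bad indices, so there is a good pair $(n,n+1)$ with $n\le 2\gam'N+1$ whose position and values $v_n,v_{n+1}$ are read off from the encoding; then $|v_n-\thet\Ok v_{n+1}|\le 2T\eps$ and $|v_{n+1}|\ge\thet^{-(n+1)}|\xi|-\eps\ge\tfrac12 T^{-2}2^{(1-2\gam')k}$, which (as $\thet\mapsto\thet\Ok v_{n+1}$ moves at speed $|v_{n+1}|$) forces $\thet$ into an interval of length at most $8T^3\eps\,2^{-(1-2\gam')k}=:2^{-c(\gam)k}$, where $c(\gam)\ge\tfrac12$ for $\gam$ small and $k$ large. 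Hence $U_k$ is covered by $e^{g(\gam)k}$ intervals of length $2^{-c(\gam)k}$, so $\Hk^s(\limsup_k U_k)=0$ whenever $s>g(\gam)/(c(\gam)\log 2)$, giving $\Dh(\limsup_k U_k)\le g(\gam)/(c(\gam)\log 2)\to 0$ as $\gam\to 0$.

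I expect the genuine work to lie in the counting of encodings, together with the bookkeeping of bad indices and long gaps: this is precisely the Erd\H{o}s--Kahane mechanism, but here it must be run in $\R^d$ with the rotations $\Ok^{g_i}$ present in the recursion $v_{n_{i+1}}\mapsto v_{n_i}$, so one has to control carefully how sharply each lattice point is pinned down by its neighbour and how the norms $|\eta_n|$ evolve along the orbit. Two issues that look worrying but turn out to be harmless: one should not work with the dual of all of $\Dk$, which may be a lower-rank or even trivial lattice --- using the dual lattice $\Lam_0$ of a fixed basis subset of $\Dk$ sidesteps this, the extra near-integrality constraints coming from the remaining digits being simply discarded; and the dependence of the depth $N$ on $\thet$ is absorbed by recording $N$ in the encoding, at the cost of only a polynomial factor in the count.
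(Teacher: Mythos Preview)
Your overall framework is the same as the paper's --- an Erd\H{o}s--Kahane covering argument in which one records, for each frequency, a sequence of integer (lattice) data and then shows that (a) the parameter $\thet$ is pinned down by the data and (b) the number of admissible data strings is subexponentially small. Your lattice points $v_n\in\Lambda_0$ are exactly the paper's integer vectors $\vec K_n$ after the change of basis $T_\Dk$.

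There is, however, a genuine gap in your counting of encodings. You write that the relation $|v_{n_i}-\thet^{g_i}\Ok^{g_i}v_{n_{i+1}}|\le 2\thet^{g_i}\eps$ ``determines $v_{n_i}$ uniquely from $v_{n_{i+1}}$'' when the right-hand side is small. This is only true for a \emph{fixed} $\thet$; but you are counting encodings over \emph{all} $\thet\in[1+\delta,T]$. For a given $v_{n_{i+1}}$, as $\thet$ ranges over this interval the point $\thet^{g_i}\Ok^{g_i}v_{n_{i+1}}$ sweeps out an arc of length comparable to $|v_{n_{i+1}}|$, and the number of lattice points in a tube around that arc is of the same order. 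Since $|v_{n_{i+1}}|$ grows geometrically as you descend toward small indices, your product bound $e^{O(\gam')k}$ on the ambiguity is unjustified as written. The final ``localisation'' step, which uses a single deep good pair to confine $\thet$, does not help here: you need the count \emph{before} you know the full encoding.

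The paper closes exactly this gap by eliminating $\thet$ from the recursion. Writing $\vec L_n=(\Ok^t)^nT_\Dk^{-1}\vec K_n$, one has $\thet\approx\|\vec L_{n+1}\|/\|\vec L_n\|$ (with error $O(\thet^{-n})$), and substituting this proxy into $\vec K_{n+1}\approx\thet\, T_\Dk(\Ok^t)^{-1}T_\Dk^{-1}\vec K_n$ gives a $\thet$-free implicit relation
\[
\Bigl\|\vec K_{n+1}-\tfrac{\|\vec L_{n+1}\|}{\|\vec L_n\|}\,T_\Dk(\Ok^t)^{-1}T_\Dk^{-1}\vec K_n\Bigr\|<C_2\max\{\|\vec\eps_n\|,\|\vec\eps_{n+1}\|\},
\]
which, because both sides are computed from the integer data alone, yields a uniform bound ($1$ or $M$) on the number of successors $\vec K_{n+1}$, independent of $\thet$ and $\eta$. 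In your language this amounts to replacing $\thet$ in the step $v_{n_{i+1}}\mapsto v_{n_i}$ by $|v_{n_i}|/|v_{n_{i+1}}|$ (or, equivalently, to propagating the constraint on $\thet$ obtained from the pair already chosen). Either formulation repairs the argument; without it the count does not go through.
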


The proof is based on a variant of the Erd\H{o}s-Kahane technique.

\smallskip

Next we state our second result on  Fourier decay. 
The natural measure on the set of
similitudes $A =  \thet^{-1}\cdot \Ok$, with $\thet > 1$ and
$\Ok \in O(d,\R)$ is
is $\Lk^1|_{(0,1)}\times m_d$, where $m_d$ is the Haar measure on $O(d,\R)$.
Every orthogonal matrix
is diagonalizable, with eigenvalues of modulus 1. Having a real eigenvalue $\pm 1$ is a degenerate case for $d$ even; for a.e.\ orthogonal matrix 
in $\R^{2s}$ all the eigenvalues are non-real and distinct. If $d=2s+1$, then $m_d$-a.e.\ orthogonal matrix in $O(d,\R)$ has one real eigenvalue $\pm 1$
and $s$ non-real distinct eigenvalues. Denote $\T^+ :=\{e^{2\pi i \alpha}:\ \alpha\in (0,\half)\}$. We will use the notation
$[N] = \{1,\ldots,N\}$ for $N\in \N$.

\begin{theorem} \label{thm2}
For almost every contraction similitude $A=\thet^{-1}\Ok$ in $\R^d$ for an even $d$, for every cyclic set of digits $\Dk$ and a probability vector $\pb>0$, the
homogeneous self-similar measure $\mu= \mu(A, \Dk, \pb)$ in $\R^d$ has power Fourier decay, i.e.,\ $\mu\in \Dsc_d$.

More precisely, for $d=2s$, with $s\ge 1$, let $\Ok$ be an orthogonal matrix with distinct non-real eigenvalues
$e^{\pm 2\pi i \alpha_j}$. Using a linear conjugacy and the identification $\R^{2s} \cong \C^s$, we can assume 
without loss of generality, that $\Ok$ is a diagonal 
complex $s\times s$ matrix
$$
\Ok = {\rm Diag}[e^{-2\pi i \alpha_1},\ldots, e^{-2\pi i \alpha_s}],\ \ \alpha_j \in (0,1/2).
$$
Suppose that the digit set $\Dk=\{0,a_1,\ldots,a_m\}$, with $a_j = (a_j^{(1)}, \ldots, a_j^{(s)})\in \C^s$, is a cyclic set for $A$
(equivalently,  for every $\ell\in [s]$ there exists $a_j$
with $a_j^{(\ell)}\ne 0$). Fix $\thet>1$ and $\pb>0$, and
 let $\ell\in [s]$. Then for every list $\{\alpha_j\}_{j\in [s]\setminus \ell}$ of distinct numbers in $(0,1/2)$, there exists a set 
 $\Ek_\ell\subset \thet\cdot\T^+$ of
zero Hausdorff dimension, such that for all $\alpha_\ell$, with $e^{-2\pi i\alpha_\ell}\notin \thet^{-1}\cdot \Ek_\ell$,  the Fourier transform  $\what\mu(\thet^{-1}\Ok, \Dk, \pb)$ 
has uniform power decay in the direction of vectors $\xi\in \R^{2s}$ satisfying 
$$
|\xi_{2\ell-1} + i\xi_{2\ell}| = \max_{j\le s} |\xi_{2j-1} + i\xi_{2j}|.
$$
\end{theorem}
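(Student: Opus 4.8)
The plan is to run a higher–dimensional Erdős–Kahane argument in which the free parameter is the \emph{rotation angle} $\alpha_\ell$ rather than the contraction ratio, extending the two–dimensional complex case treated in \cite{ShSol16b}. Fix $\thet>1$, the angles $\{\alpha_{\ell'}\}_{\ell'\ne\ell}$, the probability vector $\pb>0$, and a digit $j_*$ with $a_{j_*}^{(\ell)}\ne 0$ (which exists by cyclicity); write $\omega_{\ell'}=\thet e^{2\pi i\alpha_{\ell'}}$ and set $\mu=\mu(\thet^{-1}\Ok,\Dk,\pb)$. First I would reduce ``no power decay in $\ell$–dominant directions'' to an arithmetic condition: using $\what\mu(\xi)=\prod_{n\ge 0}\Phi_n(\xi)$ with $\Phi_n(\xi)=\sum_j p_j e^{-2\pi i\langle\xi,A^n a_j\rangle}$, the standard bounds $1-|\Phi_n(\xi)|\gtrsim_{\pb}\sum_j \dist(\langle\xi,A^n a_j\rangle,\Z)^2$ and $\log\prod_n|\Phi_n|\le-\sum_n(1-|\Phi_n|)$ give, whenever $\xi$ is $\ell$–dominant and $|\what\mu(\xi)|\ge|\xi|^{-\gamma}$ and $N:=\lfloor\log_\thet|\xi^{(\ell)}|\rfloor$, that all but at most $\eps N$ indices $n\in[0,N]$ satisfy $\dist(\langle\xi,A^n a_{j_*}\rangle,\Z)<\delta_0$, where $\delta_0$ is a small constant fixed later and $\eps=C\gamma/\delta_0^2\to 0$ as $\gamma\to 0$. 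Renormalising upwards ($k=N-n$), with $\eta:=\thet^{-N}\Ok^{-N}\xi\in\C^s$ (so $|\eta^{(\ell)}|=\thet^{-N}|\xi^{(\ell)}|\in[1,\thet)$ and $|\eta^{(\ell')}|\le\thet$), a direct computation yields
\[
u_k:=\langle\xi,A^{N-k}a_{j_*}\rangle=\re\sum_{\ell'=1}^s\omega_{\ell'}^{\,k}\,\overline{\eta^{(\ell')}}\,a_{j_*}^{(\ell')},
\]
so $(u_k)_{k\in\Z}$ is a real combination of the $2s$ exponentials $\omega_{\ell'}^{\,k},\overline\omega_{\ell'}^{\,k}$, whose bases are distinct once $\alpha_\ell\notin\{\alpha_{\ell'}\}$ (which we arrange by enlarging the final exceptional set). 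Hence $(u_k)$ satisfies the constant–coefficient recurrence of order $2s$ with characteristic polynomial $Q(x)=Q_0(x)\,(x^2-2\thet t\,x+\thet^2)$, $t:=\cos 2\pi\alpha_\ell$, where $Q_0(x)=\prod_{\ell'\ne\ell}(x^2-2\thet\cos2\pi\alpha_{\ell'}\,x+\thet^2)$ is \emph{known}. Thus $\alpha_\ell$ enters the recurrence only through $t$, and affinely: writing $Q=A-tB$ with $A(x)=(x^2+\thet^2)Q_0(x)$ (monic of degree $2s$) and $B(x)=2\thet x\,Q_0(x)$, one gets $u_{k+2s}=\sum_{i<2s}(tb_i-a_i)u_{k+i}$ and the identity $t=[A(\mathrm{sh})u]_k/[B(\mathrm{sh})u]_k$; moreover $[B(\mathrm{sh})u]_k=4\thet\,\re\!\bigl(\gamma_\ell Q_0(\omega_\ell)\omega_\ell^{\,k+1}\bigr)$ with $\gamma_\ell=\tfrac12\overline{\eta^{(\ell)}}a_{j_*}^{(\ell)}$, $|\gamma_\ell|\ge\tfrac12|a_{j_*}^{(\ell)}|>0$. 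Restricting $\alpha_\ell$ to a compact subinterval $J\subset(0,\tfrac12)$ avoiding $0,\tfrac12$ and $\{\alpha_{\ell'}\}$, both $|\sin2\pi\alpha_\ell|$ and $|Q_0(\omega_\ell)|$ are bounded below, so for at least one of any two consecutive indices $|[B(\mathrm{sh})u]_k|\ge c_2\thet^{k}$.

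Next I would carry out the counting in $\alpha_\ell$ at scale $N$. Let $m_k$ be the nearest integer to $u_k$, so that $|u_k-m_k|<\delta_0$ off a set of at most $\eps N$ indices; call $k$ a \emph{good position} if $[k,k+2s]$ lies entirely off that set. The integer state $\vec M_k=(m_k,\dots,m_{k+2s-1})\in\Z^{2s}$ propagates as follows. Along a good run the most recent good position is at distance $1$, whence $t$ is known to precision $O(\delta_0\thet^{-k})$ (from the ratio identity and the lower bound $|[B(\mathrm{sh})u]|\gtrsim\thet^{\cdot}$); then $u_{k+2s}=t\!\sum_{i<2s}b_i m_{k+i}-\!\sum_{i<2s}a_i m_{k+i}+O(\delta_0)$ has ambiguity $O(\delta_0)$, so $m_{k+2s}$ is \emph{uniquely determined} once $\delta_0$ is small. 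Across a bad gap of length $g$ the state is propagated by the $g$–th power of the linear return map $T_t$ (norm $\asymp\thet^{g}$) applied to an $O(\delta_0)$–approximate state with $t$ known to $O(\delta_0\thet^{-k})$, which introduces $\thet^{O(g)}$ choices but then $t$ is re–pinned at the next good position; the total gap length is $\le\eps N$. The initial segment is controlled because $|u_k|\le\thet^{2s}\sum_{\ell'}|a_{j_*}^{(\ell')}|=O(1)$, and the first good position occurs at $k_0=O(\eps N)$, where the relevant $2s+1$ values are $O(\thet^{k_0})$, giving $\thet^{O(\eps N)}$ choices. Multiplying these contributions, the number of integer sequences $(m_k)_{k\le N}$ arising from an $\ell$–dominant $\gamma$–witness at scale $N$ is at most $\thet^{O(\eps N)}$.

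On the other hand, a good position $k_1\ge(1-O(\eps))N$ exists, and there $t=[A(\mathrm{sh})m]_{k_1}/[B(\mathrm{sh})m]_{k_1}+O(\delta_0\thet^{-k_1})$ with the denominator $\gtrsim\thet^{k_1}$, so the sequence determines $t$, hence $\alpha_\ell=\tfrac1{2\pi}\arccos t$, to within an interval of length $\thet^{-(1-O(\eps))N}$. Thus for each such $J$ the set of $\alpha_\ell\in J$ bad at scale $N$ for the exponent $\gamma$ is covered by $\thet^{O(\eps N)}$ intervals of length $\thet^{-(1-O(\eps))N}$; since the genuinely exceptional set (the one in the statement) is contained in $\bigcap_{\gamma>0}\limsup_N(\text{bad at scale }N)$, and $\sum_N\thet^{O(\eps N)}\thet^{-\beta(1-O(\eps))N}<\infty$ for $\beta>\tfrac{O(\eps)}{1-O(\eps)}$, the exceptional set within $J$ has Hausdorff dimension $O(\eps)=O(\gamma/\delta_0^2)$, hence $0$ after letting $\gamma\to 0$ with $\delta_0$ fixed. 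Exhausting $(0,\tfrac12)\setminus(\{0,\tfrac12\}\cup\{\alpha_{\ell'}\})$ by countably many such $J$ and using countable stability of $\Dh$ (the finitely many removed points are negligible), one obtains a set $\Ek_\ell$ (in the coordinate $\thet e^{-2\pi i\alpha_\ell}$) of zero Hausdorff dimension off which $\what\mu$ has uniform power decay in $\ell$–dominant directions. The ``almost every contraction similitude'' statement then follows: $m_d$–a.e.\ $\Ok$ has distinct non–real eigenvalues, every $\xi$ is $\ell$–dominant for some $\ell\in[s]$, and applying the above for each $\ell$ (other angles and $\thet$ fixed) and integrating in $\alpha_\ell$ against $\Lk^1$ and in $\thet$ (Fubini) gives power decay of $\what\mu$ in all directions, hence $\mu\in\Dsc_d$, for $(\Lk^1|_{(0,1)}\times m_d)$–a.e.\ $A$.

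The main obstacle is the branching analysis, which differs from the classical one–dimensional Erdős–Kahane argument in two ways. First, the recurrence coefficients $a_i-tb_i$ are not integers — they involve $\thet$ and the other, generically irrational, angles — so one cannot say the next term is a \emph{fixed} affine–integer function of the state; instead one must show that the \emph{only} genuine unknown, $t=\cos 2\pi\alpha_\ell$, is itself pinned to exponentially small precision by the past good positions, and then deduce that the $2s$–dimensional integer state propagates uniquely along good runs, in $O(1)$ ways at isolated bad positions, and in $\thet^{O(g)}$ ways across a bad gap of length $g$. Second, and most delicate, one must keep the total contribution of long bad gaps and of the initial segment down to $\thet^{O(\eps N)}$ rather than something super‑exponential in $N$, which forces one to ``jump over'' gaps using the return map $T_t$ rather than tracking each step (this is the analogue of, but harder than, the treatment of the initial block in the classical argument). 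The condition that $\alpha_\ell$ avoid the other angles and $0,\tfrac12$ — needed for $|Q_0(\omega_\ell)|$ and $|\sin2\pi\alpha_\ell|$, hence the ``denominator'' $[B(\mathrm{sh})u]_k$, to be bounded below — confines all degeneracies to a finite set of points, absorbed by the covering argument above.
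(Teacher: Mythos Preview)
Your proposal is correct and follows essentially the same Erd\H{o}s--Kahane scheme as the paper: reduce to the real sequence $u_k=\sum_j\theta_j^k\tau_j$, apply the known factor $Q_0$ of the characteristic polynomial to kill the $2s-2$ fixed exponentials (the paper does this via the iterated differences $A_n^{(j)}=A_{n+1}^{(j-1)}-\theta_j A_n^{(j-1)}$, which is exactly the operator $\prod_{j\le d-2}(\mathrm{sh}-\theta_j)=Q_0(\mathrm{sh})$), so that what remains is $2\re[w_0\omega_\ell^{\,k}]$ with $|w_0|\asymp 1$, then recover the free parameter from a few consecutive values and run the standard covering count. The one substantive difference is in how the planar step is executed. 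The paper imports Lemma~\ref{lem-tech} from \cite{ShSol16}, which solves $\re(\theta^j w_0)=x_j$, $j=0,\dots,3$, on an open neighbourhood $\Nk_r(V_{R_0})$; this produces a prediction function $\Xi_{\thb'}(K_n,\dots,K_{n+d+2})$ for $K_{n+d+3}$ that does \emph{not} involve the unknown $\theta_{d-1}$, so the branching is bounded by a fixed $M$ at \emph{every} index, good or bad, and the gap analysis disappears. Your route instead recovers $t=\cos 2\pi\alpha_\ell$ via the explicit ratio $t=(v_{k+2}+\thet^2 v_k)/(2\thet v_{k+1})$, using one fewer value (you exploit that $|\omega_\ell|=\thet$ is known), at the price of handling the event $v_{k+1}\approx 0$ by the two-consecutive argument and of ``jumping'' bad gaps with $T_t^{\,g}$ while tracking the precision loss in $t$. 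Both executions yield the same covering bound $\exp\bigl(O(\eps\log(1/\eps)N)\bigr)$ and hence the zero-dimensional exceptional set; the paper's version is cleaner at the branching step, yours is more self-contained in that it avoids the black-box lemma.
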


\begin{remark} \label{rem1} {\em 
(a) The first claim in the theorem above, about almost every contraction similitude $A=\thet^{-1}\Ok$, follows from the more precise statement by Fubini's Theorem, with the exceptional set of parameters being $\bigcup_{\ell\in [s]} \bigl\{\balpha\in (0,\half)^\ell:\ e^{-2\pi i \alpha_\ell}\in \thet^{-1}\cdot \Ek_\ell\bigr\}$.

(b)
The restriction to only even dimensions $d$ is an artefact of the proof; we do not think it is essential, but we were not able to remove it.
It can be explained, roughly, as follows: 

The argument proceeds by fixing all the eigenvalues, except one, say, $\ell$'s eigenvalue, and running a variant of the Erd\H{o}s-Kahane scheme to estimate the Hausdorff dimension of the set of ``bad'' values of the undetermined eigenvalue for the Fourier decay. The problem arises 
for direction vectors with a small $\ell$-component, with the bounds becoming non-uniform. Thus we are forced to exclude {\em all} 
eigenvalues in turn.
Suppose $d=2s+1$.
Almost every similitude in $\R^{2s+1}$ has $2s$ non-real distinct eigenvalues and one real eigenvalue.
Excluding the real eigenvalue does not work, since the other eigenvalues determine it up to sign. All that we can get
for a typical homogeneous self-similar measure in $\R^{2s+1}$, with a single real eigenvalue,
is uniform power Fourier decay outside of an arbitrary circular cone around the real eigen-direction, fixed in advance. This is shown
 by a minor modification of the proof of Theorem~\ref{thm2}; we leave the details to the reader.
}
\end{remark}


\subsection{Application: fat Sierpi\'nski tetrahedron}
Consider the following IFS in $\R^3$: $$\{\lam x,\ \lam x + \be_1,\ \lam x + \be_2,\ \lam x + \be_3\}.$$ We call its attractor, denoted 
$S^{(3)}_\lam$, 
for $\lam\in (\half,1)$, the ``fat Sierpi\'nski tetrahedron,'' by analogy with the ``fat Sierpi\'nski gasket,'' studied by many authors;
see, e.g., \cite{SiSol02,BroMonSid04,Jordan05,JorPoli06}.

\begin{corollary}
For all $\lam\in (4^{-1/3},1)$, 
outside of a set of zero Hausdorff dimension, the set $S_\lam$ has positive 3-dimensional Lebesgue measure.
\end{corollary}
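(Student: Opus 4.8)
The plan is to realize $S_\lam$ as the support of a concrete homogeneous self-similar measure, to obtain power Fourier decay for that measure from Theorem~\ref{th:new1}, and then to upgrade the decay to absolute continuity; once a nonzero absolutely continuous measure is seen to live on $S_\lam$, this forces $\Lk^3(S_\lam)>0$.

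Concretely, I would take the uniform probability vector $\pb=(1/4,1/4,1/4,1/4)$ and set $\mu_\lam:=\mu(\lam I,\Dk,\pb)$, where $\Dk=\{0,\be_1,\be_2,\be_3\}$ and $I\in O(3,\R)$. Since $\pb>0$, $\mu_\lam$ is a nonzero finite positive measure with $\supp\mu_\lam=S_\lam$. The digits span $\R^3$, so Theorem~\ref{th:new1}, applied with orthogonal part $I$ and $\thet=1/\lam$, gives a set $\Ek\subset(0,1)$ with $\Dh(\Ek)=0$ such that $\mu_\lam\in\Dsc_3$ for every $\lam\in(0,1)\setminus\Ek$. (Theorem~\ref{th:new1} produces the zero-dimensional exceptional set in the variable $\thet$; since $\thet\mapsto 1/\thet$ is bi-Lipschitz away from $0$ and Hausdorff dimension zero is invariant under such maps, the corresponding set of $\lam$'s is again zero-dimensional.) Next I would observe that $(4^{-1/3},1)$ is precisely the super-critical window here: the similarity dimension of $\mu_\lam$ equals $H(\pb)/\log(1/\lam)=\log 4/\log(1/\lam)$, which exceeds $3$ exactly when $\lam^3>1/4$. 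Hence, combining Theorem~\ref{th:new1} with the result of Corso and Shmerkin \cite{CoShm24}---which in the super-critical regime upgrades power Fourier decay of a homogeneous self-similar measure to absolute continuity---one obtains a further set $\Ek'\subset(4^{-1/3},1)$ with $\Dh(\Ek')=0$ such that $\mu_\lam\ll\Lk^3$ for all $\lam\in(4^{-1/3},1)\setminus(\Ek\cup\Ek')$. For such $\lam$ the measure $\mu_\lam$ is nonzero, absolutely continuous with respect to $\Lk^3$, and supported on $S_\lam$, so $\Lk^3(S_\lam)>0$; and $\Dh(\Ek\cup\Ek')=0$, which is the assertion.

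The step I expect to be the real obstacle---and the reason the corollary has to lean on \cite{CoShm24} rather than purely on the tools of this paper---is the passage from Fourier decay to absolute continuity. All maps of the tetrahedron IFS share the linear part $\lam I$, so the system is linearly reducible; Hochman's higher-dimensional dimension theory does not apply, and one cannot assert $\Dh(\mu_\lam)=3$ and then feed $\mu_\lam$ into Shmerkin's convolution Lemma~\ref{lem:Shm}. The elementary moves available here---for instance rewriting $\mu_\lam$ as a convolution of $K$ rescaled copies of $\mu(\lam^K I,\Dk,\pb)$, one for each residue class of the indices modulo $K$---only reproduce the same Fourier decay exponent (each factor carries roughly $1/K$ of it), so they cannot by themselves cross the threshold $3/2$ beyond which $\widehat\mu_\lam\in L^2$, and hence an $L^2$ density, would follow. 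Circumventing this reducibility is exactly the role played by \cite{CoShm24}.
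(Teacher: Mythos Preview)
Your proposal has a genuine gap at the crucial step: the appeal to Corso--Shmerkin \cite{CoShm24} is not justified in this situation and, as far as the statements available in this paper go, does not apply. The result recorded here as Theorem~\ref{th:CoSh} requires that the orthogonal part $\Ok$ have non-real eigenvalues with irrational rotation angles (and pairwise irrational differences); for the tetrahedron IFS the linear part is $\lam I$, so $\Ok=I$ and there are no complex eigendirections at all. Likewise, Corollary~\ref{cor:ac1}(i) is an ``a.e.\ $\Ok$'' statement with respect to Haar measure on $O(d,\R)$, and the identity is a null set there. Your description of \cite{CoShm24} as something that ``upgrades power Fourier decay to absolute continuity'' is also not accurate: that paper supplies $L^q$ dimension under a projected exponential separation hypothesis; absolute continuity then comes from combining that with Fourier decay of a \emph{second} convolution factor via \cite[Theorem~4.4]{ShSol16}, not from Fourier decay of $\mu_\lam$ itself. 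So you are missing both the correct mechanism and the verification of its hypotheses.

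The paper's proof bypasses \cite{CoShm24} entirely and resolves the reducibility obstacle by a symmetry trick you did not find. One first conjugates so that the digits are the vertices of a \emph{regular} tetrahedron $\Tk$, then writes $\mu_\lam=\mu_{\lam^k}*\wt\mu_\lam^{(k)}$ by keeping/skipping every $k$-th term. Theorem~\ref{th:new1} handles the first factor. For the second factor one observes that, because the weights are uniform, $\wt\mu_\lam^{(k)}$ is unchanged if each map of its generating IFS is precomposed with a rotation from the symmetry group of $\Tk$; choosing these rotations differently for different maps produces a linearly \emph{irreducible} IFS with the same invariant measure. Now Hochman's higher-dimensional theorem \cite[Corollary~1.6]{Hochman} applies and gives $\Dh(\wt\mu_\lam^{(k)})=3$ off a zero-dimensional set of $\lam$, after which Shmerkin's Lemma~\ref{lem:Shm} yields absolute continuity of the convolution. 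The symmetry manoeuvre is the missing idea; without it (or a genuine substitute) your argument does not close.
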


\begin{proof}[Proof sketch]
We use the method of Shmerkin \cite{Shmerkin14} to show absolute continuity of the uniform self-similar measure on 
$S^{(3)}_\lam$ for $\lam\in 
(4^{-1/3},1)\setminus \Ek$, where $\Dh(\Ek)=0$.  We can replace the set of digits $\{0,\be_1,\be_2, \be_3\}$ by the set of vertices of a regular tetrahedron $\Tk\subset \R^3$, which has a larger group of symmetry, via a linear conjugacy, which preserves all the properties
of the self-similar measure.
Denote by $\mu_\lam$ the corresponding uniform self-similar measure in $\R^3$.
We can write 
\beq \label{eq:split1}
\mu_\lam = \mu_{\lam^k} * \wt \mu_\lam^{(k)},
\eeq
where $\wt\mu_\lam^{(k)}$ is obtained by skipping every $k$-th term in the random base-$\lam$ series representation. 
 It suffices to obtain the absolute continuity result
for the intervals $\bigl(4^{-\frac{k-1}{3k}},1\bigr)$  for all $k$. We have
$\mu_{\lam^k}\in \Dsc_3$ for all
$\lam \in (0,1) \setminus \Ek_k^{(1)}$, with $\Dh(\Ek_k^{(1)})=0$, by Theorem~\ref{th:new1}.

For the measure $\wt\mu_\lam^{(k)}$ we would like to use Hochman's results \cite{Hochman} on the dimension of self-similar measures
in $\R^d$.
Note that 
$\wt\mu_\lam^{(k)}$ is a self-similar measure of similarity dimension $\frac{\log(4^{k-1})}{\log(1/\lam^k)}>3$ in this parameter interval.
We cannot use \cite[Corollary 1.6]{Hochman} directly, since the homogeneous IFS in $\R^3$ is not linearly irreducible.
However, in view of the symmetry, since $\wt\mu_\lam^{(k)}$ has uniform weights, 
we obtain the same self-similar measure if the functions mapping 
the big tetrahedron $\Tk$ onto the $4^{k-1}$ small ones, are pre-composed with a rotation taking  $\Tk$ into itself. This rotation may be
different for each smaller tetrahedron. This allows us to avoid a common proper invariant subspace for the linear parts of the IFS maps;
for instance, by using rotations about lines perpendicular to the facets of $\Tk$. A standard argument (see \cite[Section 6.6]{Hochman}) yields that there exists a set $\Ek_k^{(2)}$ of zero Hausdorff (even packing) dimension, such that for all 
$\lam\in \bigl(4^{-\frac{k-1}{3k}},1\bigr)\setminus \Ek_k^{(2)}$ the resulting IFS satisfies the exponential separation condition, hence
$\dim_H(\wt \mu_\lam^{(k)}) = 3$ for all $\lam\in \bigl(4^{-\frac{k-1}{3k}},1\bigr)\setminus \Ek_k^{(2)}$ by 
Hochman's \cite[Corollary 1.6]{Hochman}.
Finally, using (the higher-dimensional version of) \cite[Lemma 2.1]{Shmerkin14}, we
obtain absolute continuity for all $\lam\in \bigl(4^{-\frac{k-1}{3k}},1\bigr)\setminus (\Ek_k^{(1)}\cup \Ek_k^{(2)})$, as desired.
\end{proof}

The ``fat tetrahedron'' construction and the Lebesgue measure claim 
can be analogously extended to an arbitrary dimension $d\ge 4$; we used $d=3$ for 
simplicity.

\medskip

Recall Hochman's exponential separation (ES) condition, used above, in the special case of self-similar IFS.

\begin{defi}[M. Hochman \cite{Hochman14,Hochman}] {\em
A homogeneous IFS $f_j(x) = Ax + a_j,\ a_j\in \Dk$, is said to satisfy the ES condition if there exist a sequence $N_j\to \infty$ and $\eps>0$,
such that for any two words $v\ne w\in \Dk^{N_j+1}$, holds
$$
\Bigl\|\sum_{n=0}^{N_j} A^n(v_n - w_n)\Bigr\|\ge \eps^{N_j}.
$$
Equivalently, for any positive probability vector $\pb$, the atoms of discrete measures approximating the self-similar measure $\mu(A,\Dk,\pb)$:
$$
\mu_{N_j} (A,\Dk,\pb) := \Bigl(\Asterisk\prod\limits_{n=0}^{N_j}\Bigr) \sum_{j=0}^m p_j \delta_{A^n a_j},
$$
are $\eps^{N_j}$-separated.}
\end{defi}

\subsection{Application: absolute continuity via $L^q$ dimension}

This application is based on a recent paper of Corso and Shmerkin \cite{CoShm24}. In order to state their result, we need to
recall the notion of $L^q$ dimension for a probability measure $\nu$ in $\R^d$. It is given by
$$
\dim_q(\nu) = \frac{\tau_q(\nu)}{q-1},\ \ q>1,\ \ \mbox{where}\ \ 
\tau_q(\nu) = \liminf_{n\to\infty} \frac{-\log\sum_{Q\in D_n}\nu(Q)^q}{n},
$$
and $D_n$ is the partition of $\R^d$ into $2^{-n}$-mesh cubes. The ``self-similar'' $L^q$ dimension of a homogeneous self-similar
measure $\mu$, corresponding to a weighted ISF with a contraction ratio $\thet^{-1}$ and probability vector $\pb = (p_j)$, provides a
natural upper bound for the $L^q$-dimension of $\mu$:
\beq \label{eq:dimsq}
\dim_q(\mu) \le \dim_{\rm s}(\mu, q):= \frac{-\log \sum_j p_j^q}{(q-1)\log\thet}.
\eeq
Now we can state \cite[Theorem 1.11]{CoShm24}, which is slightly reformulated for our purposes.

\begin{theorem}[E. Corso and P. Shmerkin, 2024 preprint] \label{th:CoSh}
Let $d=2s+r$, with $r\in \{0,1\}$. Consider a homogeneous self-similar IFS in $\C^s\oplus \R^r \cong \R^d$ of the following form. 
Let $\thet>1$ and $\alpha_k\in [0,1)$, $1\le k \le s$, be
such that $\alpha_k\notin \Q$ and $\alpha_i - \alpha_k\notin \Q$ for $i\ne k$ (in particular, $\alpha_i \ne \alpha_k$). The IFS is
$\Phi = \{f_j\}_{j=0}^m$ where $f_j(\bx) = \thet^{-1} \Ok(\bx) + a_j$, with 
$$
\Ok = {\rm Diag}[e^{-2\pi i \alpha_1},\ldots, e^{-2\pi i \alpha_s}]\ \ \mbox{and}\ \
a_j = (a_j^{(1)}, \ldots, a_j^{(s)})\in \C^s,\ \ \mbox{if}\ d=2s,
$$
or
$$
\Ok = {\rm Diag}[e^{-2\pi i \alpha_1},\ldots, e^{-2\pi i \alpha_s},\pm 1]\ \ \mbox{and}\ \
a_j = (a_j^{(1)}, \ldots, a_j^{(s)}, a_j^{(d)})\in \C^s\oplus \R,\ \ \mbox{if}\ d=2s+1.
$$

{\bf  Main Assumption:} All the  coordinate IFS of the form 
$\{f_j^{(\ell)}\}_{j=0}^m$, where
$$
f_j^{(\ell)}(z) = \thet^{-1} e^{-2\pi i \alpha_\ell} z + a_j^{(\ell)},\ \ \ell=1,\ldots,s,\ \ \mbox{on}\ \ \C,
$$
and
$$
f_j^{(d)}(x) = \pm \thet^{-1} x + a_j^{(d)},\ \ \mbox{if}\ d=2s+1,\ \ \mbox{on}\ \ \R,
$$
fulfil the exponential separation condition. 

Then, for any probability vector $\pb>0$, the associated
self-similar measure satisfies
$$
\dim_q(\mu) = \min\{d,\dim_{\rm s}(\mu,q)\}.
$$
\end{theorem}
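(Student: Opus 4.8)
The statement is \cite[Theorem 1.11]{CoShm24}; the plan below reconstructs its proof, which is an $L^q$-analogue of the entropy/inverse-theorem scheme of Hochman \cite{Hochman14} and Shmerkin \cite{Shmerkin14}. Since $\dim_q(\mu)\le d$ always and $\dim_q(\mu)\le\dim_{\rm s}(\mu,q)$ by \eqref{eq:dimsq}, only the lower bound $\dim_q(\mu)\ge\min\{d,\dim_{\rm s}(\mu,q)\}$ needs proof; fix $q>1$ and $\pb>0$, write $D:=\dim_{\rm s}(\mu,q)$ and $\thet>1$ for the reciprocal contraction ratio. Code $\mu=\Pi_*\pb^{\N}$ with $\Pi(\om)=\sum_{n\ge0}A^na_{\om_n}$; since all linear parts are powers of the single similitude $A=\thet^{-1}\Ok$, the measure factors as the infinite convolution $\mu=\conv_{n\ge0}(A^n)_*\eta$, $\eta:=\sum_{j=0}^mp_j\delta_{a_j}$, with truncations $\mu_N:=\conv_{n=0}^{N-1}(A^n)_*\eta=\sum_{\bw\in[m+1]^N}p_\bw\,\delta_{\Pi_N(\bw)}$, atomic measures with $(m+1)^N$ atoms at scale $\asymp\thet^{-N}$. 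As in \cite{Shmerkin14} one first reduces the computation of $\tau_q(\mu)$ to the exponential rate of decay of the $L^q$-norm of $\mu_N$ smoothed at scale $\thet^{-N}$; the desired equality becomes the statement that this rate is the maximal one compatible with the trivial bound, truncated at the ambient dimension.

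\textbf{Step 1 (inverse theorem).} Apply the inverse theorem for $L^q$-norms of convolutions in $\R^d$ — the higher-dimensional counterpart of the planar theory underlying \cite{Shmerkin14}, in the form developed in \cite{CoShm24} — to $\mu_N=\conv_{n<N}(A^n)_*\eta$. Its output is a rigidity alternative: if $\dim_q(\mu)<\min\{d,D\}-\gam$ for some $\gam>0$, then there is $\delta=\delta(\gam,q)>0$, with $\delta\downarrow0$ as $\gam\downarrow0$, such that for all large $N$, at a $(1-\delta)$-density of scales $t\le N$ the measure $\mu$ blown up at scale $\thet^{-t}$ around a typical mesh point is, up to total-variation error $\delta$, either (i) $(1-\delta)$-concentrated on a ball of relative radius $\delta$ (equivalently, the level-$t$ cylinders of $\mu_N$ collide super-exponentially), or (ii) roughly uniform on the relative $\delta$-neighbourhood of some \emph{proper affine subspace} $W_t\subsetneq\R^d$. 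The key structural remark is that, by the exact self-similarity $\mu=\sum_{\bw\in[m+1]^t}p_\bw\,(\mu\circ f_\bw^{-1})$ with $f_\bw=A^t(\cdot)+\Pi_t(\bw)$, the blow-up of $\mu$ at scale $\thet^{-t}$ is, up to a translation and a bounded error from the mismatch between mesh cubes and cylinders, the \emph{rotated copy} $(\Ok^t)_*\mu$.

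\textbf{Step 2 (both alternatives fail).} Alternative (i) at a positive density of scales forces, for a positive proportion of $t$, an exponentially large number of pairs $\bv\ne\bw\in[m+1]^t$ with $\|\Pi_t(\bv)-\Pi_t(\bw)\|\le\thet^{-(1+c)t}$ for some $c>0$. Choosing a coordinate $\ell$ in which the first disagreeing digits of $\bv,\bw$ differ (some coordinate does, as the elements of $\Dk$ are distinct), the $\ell$-th coordinate sequences are distinct yet $|\Pi_t^{(\ell)}(\bv)-\Pi_t^{(\ell)}(\bw)|\le\thet^{-(1+c)t}$, contradicting the exponential separation of the coordinate IFS $\{f_j^{(\ell)}\}$ (and of $\{f_j^{(d)}\}$ when $d=2s+1$). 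For alternative (ii): by the structural remark the fixed measure $\mu$ is, for a $(1-\delta)$-density of $t\le N$, within relative distance $\delta$ of the proper subspace $(\Ok^{-t})\widetilde W_t$ ($\widetilde W_t$ obtained by sliding $W_t$ through a point of $\supp\mu$). Since $\Ok^t={\rm Diag}[e^{-2\pi it\alpha_1},\dots,e^{-2\pi it\alpha_s}]$ and the $\alpha_\ell$ are irrational with $\alpha_i-\alpha_\ell\notin\Q$ (no rational resonances among the rotation angles), Weyl equidistribution with an effective Erd\H{o}s--Tur\'an rate shows $\{\Ok^t\}_{t\le N}$ equidistributes in its orbit closure; hence, for any fixed proper subspace, the density of $t\le N$ for which $\mu$ is $\delta$-close to its $\Ok^{-t}$-image is $o_\delta(1)$, \emph{unless} $\mu$ is $\delta'$-close to a subspace invariant under that orbit closure, i.e.\ to a proper \emph{coordinate} subspace of $\C^s$ (the eigenvalues of $\Ok$ being distinct). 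Letting $\gam\downarrow0$, hence $\delta\downarrow0$, this forces $\supp\mu=K_{A,\Dk}$, and thus $\Dk$, into a proper coordinate subspace — contradicting that $\Dk$ is a cyclic set for $A$ (in the odd case $d=2s+1$ the real coordinate is handled the same way, using that $\pm1\cdot\Dk^{(d)}$ cannot cluster near $0$ since $\Dk^{(d)}$ contains a nonzero value). Both alternatives being impossible, $\gam=0$ and $\dim_q(\mu)\ge\min\{d,D\}$.

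\textbf{Main obstacle.} Two ingredients carry the weight. The heavier is Step 1: establishing the $L^q$-inverse theorem in $\R^d$ for $d\ge3$ — controlling $L^q$-norms of convolutions of discretized measures simultaneously at all intermediate scales and showing that near-maximal concentration is detected either by collision of atoms or by near-uniformity on neighbourhoods of proper affine subspaces; this is the bulk of \cite{CoShm24} and rests on the planar theory of \cite{Shmerkin14}. The second delicate point is the interface in Step 2: the inverse theorem yields only \emph{approximate} structure at a positive-density (not full-density) set of scales, so the equidistribution argument must be run quantitatively, matching the structure parameter $\delta=\delta(\gam)$ against the density of ``resonant'' times; this is precisely where the arithmetic hypotheses on the $\alpha_\ell$ are consumed, and making the two quantifiers cooperate is the subtle part of the argument.
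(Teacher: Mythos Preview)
The paper does not contain a proof of this theorem. Theorem~\ref{th:CoSh} is stated as a result of Corso and Shmerkin, explicitly attributed to \cite[Theorem~1.11]{CoShm24} and ``slightly reformulated for our purposes''; it is then used as a black box in the derivation of Corollary~\ref{cor:ac1}. There is therefore no proof in the present paper against which to compare your proposal.

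That said, a brief comment on the proposal itself. Your outline is in the correct genre --- an $L^q$ inverse-theorem scheme in the spirit of Shmerkin's work, with the exponential separation assumption used to rule out the ``super-concentration'' branch and the irrationality/non-resonance hypotheses on the $\alpha_\ell$ used to rule out the ``saturated on a proper subspace'' branch via equidistribution of $\{\Ok^t\}$. However, your Step~2 handling of alternative~(i) is not right as written: you claim that if $\bv\ne\bw$ then one can find a coordinate $\ell$ in which the digit sequences differ, and then invoke exponential separation for the $\ell$-th coordinate IFS. But the coordinate IFS $\{f_j^{(\ell)}\}_{j=0}^m$ may well have repeated maps (nothing prevents $a_i^{(\ell)}=a_j^{(\ell)}$ for $i\ne j$), so two distinct words in $[m+1]^t$ can project to identical words for the $\ell$-th IFS even when some digits differ; the exponential separation condition for $\{f_j^{(\ell)}\}$ says nothing in that case. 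The actual argument has to be more careful about how the projected separation condition (what \cite{CoShm24} calls ``projected exponential separation'') is used. In any case, the substantive content --- the higher-dimensional $L^q$ inverse theorem and the quantitative matching of the structure parameter against the equidistribution density --- lives entirely in \cite{CoShm24}, not in the paper under review.
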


\begin{remark}
{\em 
(a) The results of Corso and Shmerkin \cite{CoShm24} are much more general; they work in the framework of ``pleasant models,''
see \cite[Section 1]{CoShm24}.

(b)  The {\em Main Assumption}
 in the last theorem is a special case of the ``projected exponential separation'' condition, see \cite[Definition 1.10]{CoShm24}.

(c)  Corso and Shmerkin \cite{CoShm24} obtain  analogous results on $L^q$ dimensions of projected measures to {\em arbitrary subspaces}.
}
\end{remark}

\begin{corollary}\label{cor:ac1}
\begin{enumerate}
\item[{\bf (i)}] Let $d\ge 3$ be arbitrary, and let $\Dk =\{0, a_1,\ldots, a_m\}$ be an arbitrary spanning digit set in $\R^d$. Then for $\Lk^1\times m_d$-a.e.\
contracting similitude $A = \thet^{-1} \Ok$ in $\R^d$, for every probability vector $\pb$ and $q>1$, such that 
\beq \label{eq:lq-cond}
\dim_s(\mu)=\frac{-\log \sum_j p_j^q}{(q-1)\log\thet}>d,
\eeq
the self-similar measure $\mu(\thet^{-1}\Ok, \Dk,\pb)$ is absolutely continuous with respect to $\Lk^d$ with a density (the Radon-Nikodym 
derivative) in $L^q(\R^d)$.
\item[{\bf (ii)}] 
For a.e.\ self-similar homogeneous IFS in $\R^d \cong \C^s$, $d=2s$, with a contraction ratio $\thet^{-1}\in (0,1)$,
which is affinely irreducible, for every $\pb>0$ and $q>1$, satisfying \eqref{eq:lq-cond},
the corresponding self-similar measure is absolutely continuous with respect to $\Lk^d$ with a density in $L^q(\R^d)$.
\end{enumerate}
\end{corollary}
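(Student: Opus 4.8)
The plan is to combine the two Fourier‑decay theorems above with the $L^q$‑dimension theorem of Corso and Shmerkin (Theorem~\ref{th:CoSh}), through the ``skipping'' decomposition \eqref{eq:split1} and an $L^q$‑version of Shmerkin's convolution lemma (Lemma~\ref{lem:Shm}). Fix a contracting similitude $A=\thet^{-1}\Ok$ in $\R^d$, a probability vector $\pb>0$ (as in (ii); this is the substantive case), and an exponent $q>1$ with $\dim_s(\mu,q)>d$. Since the threshold is \emph{strict}, I can pick $k\in\N$ so large that $\frac{k-1}{k}\dim_s(\mu,q)>d$, and split, using \eqref{eq-conv},
\[
\mu(A,\Dk,\pb)=\mu_{(k)}*\wt\mu^{(k)},\qquad \mu_{(k)}:=\mu(A^k,\Dk,\pb),
\]
where $\wt\mu^{(k)}$ collects the factors of index $n\not\equiv 0\pmod k$. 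The key point is that $\wt\mu^{(k)}$ is again a homogeneous self‑similar measure, with linear part $A^k=\thet^{-k}\Ok^k$, alphabet $\{0,\dots,m\}^{k-1}$, digit map $w\mapsto\sum_{r=1}^{k-1}A^r a_{w_r}$ and weights $\prod_r p_{w_r}$; since this digit map is injective for a.e.\ $A$ (so no weights are merged), $\dim_s(\wt\mu^{(k)},q)=\frac{k-1}{k}\dim_s(\mu,q)>d$.

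I would then feed the two factors to two different machines. For $\mu_{(k)}=\mu(\thet^{-k}\Ok^k,\Dk,\pb)$: in case \textbf{(i)}, $\Dk$ spans $\R^d$, so Theorem~\ref{th:new1}, applied to $\Ok^k$ with parameter $\thet^k$, gives $\mu_{(k)}\in\Dsc_d$ for all $\thet>1$ outside a set of Hausdorff dimension zero. Since the Erd\H{o}s-Kahane exceptional set is independent of $\pb$ and $\th\mapsto\th^k$ is bi‑Lipschitz on compacta, the union over $k$ of these sets still has Hausdorff dimension zero and works simultaneously for every $\pb$ and $q$. In case \textbf{(ii)}, $d$ is even and $\Dk$ is cyclic, so Theorem~\ref{thm2}, applied to $A^k$, gives $\mu_{(k)}\in\Dsc_d$ for a.e.\ $A$, using that cyclicity for $A$ is equivalent to cyclicity for $A^k$ once the eigenvalue angles are distinct, and that the good set of Theorem~\ref{thm2} pulls back under $A\mapsto A^k$ to a full‑measure set (intersecting over all $k$). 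For $\wt\mu^{(k)}$: I would invoke Theorem~\ref{th:CoSh} to get $\dim_q(\wt\mu^{(k)})=\min\{d,\dim_s(\wt\mu^{(k)},q)\}=d$; its hypotheses hold for a.e.\ $A$, since the irrationality of the angles $k\alpha_j$ and $k(\alpha_i-\alpha_j)$ is generic, while the \emph{Main Assumption}---exponential separation of the coordinate IFS of $\wt\mu^{(k)}$---holds outside a set of zero Hausdorff (even packing) dimension by the standard transversality argument, exactly as in the proof of the ``fat tetrahedron'' corollary above (cf.\ \cite[Section~6.6]{Hochman}), the non‑degeneracy being that in each coordinate $\ell$ the digit $\sum_{r=1}^{k-1}A^r a_j$ has nonzero $\ell$‑component for a.e.\ $A$ whenever $a_j^{(\ell)}\ne 0$ (guaranteed by the spanning, resp.\ cyclic, hypothesis on $\Dk$).

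Finally I would apply the $L^q$‑convolution lemma: \emph{if $\mu_1$ is a probability measure on $\R^d$ with $\dim_q(\mu_1)=d$ and $\mu_2\in\Dsc_d$, then $\mu_1*\mu_2$ is absolutely continuous with a density in $L^q(\R^d)$}; with $\mu_1=\wt\mu^{(k)}$ and $\mu_2=\mu_{(k)}$ this gives the result, the exceptional set of parameters being the union over $k\in\N$ of the sets above (of Hausdorff dimension zero in (i), of measure zero in (ii)), independent of $\pb$ and $q$. For $q=2$ the lemma is elementary: with $|\wh{\mu_2}(\xi)|\lesssim|\xi|^{-\gamma}$ and $\int_{|\xi|\le R}|\wh{\mu_1}|^2\lesssim_\eps R^\eps$ (which is $\dim_2(\mu_1)=d$), one gets $\|\mu_1*\mu_2\|_2^2\lesssim\sum_j 2^{-2j\gamma}2^{j\eps}<\infty$ for $\eps<2\gamma$, and the same estimate applied to the mollifications at scale $2^{-n}$ keeps the discretized $L^2$‑norms bounded; for integer $q$ one argues similarly with the $q$‑fold Fourier energy of $\mu_1$. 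I expect this convolution lemma to be the one non‑soft ingredient: the \emph{power} Fourier decay of $\mu_2$, however slow, must be used to absorb the sub‑exponential fluctuations of the scale‑$2^{-n}$ discretized $L^q$‑norms of $\mu_1$ (recall that $\dim_q(\mu_1)=d$ is in general strictly weaker than $\mu_1$ having an $L^q$ density), and for general $q>1$ I would cite it from \cite{CoShm24} or deduce it by their $L^q$‑flattening methods. Absolute continuity alone also follows directly from Lemma~\ref{lem:Shm}, since $\dim_q(\wt\mu^{(k)})=d$ implies $\Dh(\wt\mu^{(k)})=d$.
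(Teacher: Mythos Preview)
Your proposal is correct and follows essentially the same route as the paper: the convolution splitting $\mu=\mu_k*\wt\mu_k$ with $\mu_k=\mu(A^k,\Dk,\pb)$, power Fourier decay for $\mu_k$ via Theorem~\ref{th:new1} (part (i)) or Theorem~\ref{thm2} (part (ii)), full $L^q$ dimension for $\wt\mu_k$ via Theorem~\ref{th:CoSh} with the coordinate ES condition verified generically, and then the $L^q$ convolution lemma. The lemma you were unsure how to cite is \cite[Theorem~4.4]{ShSol16}; the paper invokes it directly rather than re-deriving it, and otherwise your argument matches (you are in fact slightly more careful than the paper about the digit-injectivity needed for the formula $\dim_s(\wt\mu_k,q)=\tfrac{k-1}{k}\dim_s(\mu,q)$).
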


Let $d=2s$, and consider a self-similar IFS in $\C^s\cong \R^d$, with $A = \thet^{-1}\Ok$ a complex diagonal $s\times s$ matrix and
$\Dk =\{0, a_1,\ldots, a_m\}$ an arbitrary set of digits, with $m\ge 1$, such that for each $\ell\in [s]$ there exists $a_j^{(\ell)}\ne 0$.Then for
(Lebesgue)-a.e. such $A$, for every probability vector $\pb$ and $q>1$, satisfying \eqref{eq:lq-cond},
the self-similar measure $\mu(\thet^{-1}\Ok, \Dk,\pb)$ is absolutely continuous with respect to $\Lk^d$ with a density in $L^q(\R^d)$.

\begin{proof}[Proof (derivation)]
(i)
By analogy with \eqref{eq:split1} we have, for $k\ge 2$:
\beq \label{eq:split2}
\mu=\mu(\thet^{-1}\Ok, \Dk,\pb) = \mu(\thet^{-k}\Ok^k, \Dk,\pb) * \mu(\thet^{-k}\Ok^k, \wt\Dk_k,\wt\pb_k) =: \mu_k * \wt\mu_k,
\eeq
where $\mu_k=\mu(\thet^{-k}\Ok^k, \Dk,\pb)$ corresponds to keeping  every $k$-th term in the convolution product formula \eqref{eq-conv} and skipping the rest, whereas other measure, $\wt\mu_k$, is the convolution product with every $k$-th term skipped.
It is easy to see that  $\wt\mu_k=\mu(\thet^{-k}\Ok^k, \wt\Dk_k,\wt\pb_k)$
is indeed a self-similar measure with some other digit set $\wt\Dk_k$ (depending on $\thet^{-1}\Ok$) and a probability 
vector $\wt\pb_k$.  Fix $q>1$ and a probability vector $\pb$. It suffices to prove the claim for a.e.\ $\thet>1$ satisfying
$$
\dim_{\rm s}(\wt\mu_k, q)=\frac{-(k-1)\log \sum_j p_j^q}{k(q-1)\log\thet}>d,
$$
for every $k\ge 2$, so we fix $k$ for the rest of the proof. The condition on $\Ok$ in Theorem~\ref{th:CoSh}, that is, on its eigenvalues, is clearly a full 
measure condition. 
Then by Theorem~\ref{th:new1}, for all but a zero Hausdorff dimension set of exceptions $\thet$, the
measure $\mu_k = \mu(\thet^{-k}\Ok^k, \Dk,\pb)$ has power Fourier decay.  For the measure $\wt\mu_k$ we
would like to apply Theorem~\ref{th:CoSh}, then by \cite[Theorem 4.4]{ShSol16} the claim on absolute continuity with a density in $L^q$ follows.
It remains to ensure exponential separation (ES) for the coordinate IFS's. A standard by now argument (see \cite[Theorem 5.11]{Hochman} and
\cite[Prop.\,3.2]{ShSol16b}) yields that for every coordinate there is a zero packing dimension set of exceptional contraction coefficients
 ($\thet^{-1}$ in the case of
the real eigenvalue and $\thet^{-1} e^{-2\pi i \alpha_\ell}$ in the case of a complex eigenvalue), on the complement of which the ES holds. 
Combining everything, we obtain the desired zero-Lebesgue measure exceptional set.

\smallskip

(ii) In view of the remarks above, we can assume without loss of generality that we have
a self-similar IFS in $\C^s\cong \R^d$, with $A = \thet^{-1}\Ok$ a complex diagonal $s\times s$ matrix and
$\Dk =\{0, a_1,\ldots, a_m\}$ is an arbitrary set of digits, such that for each $\ell\in [s]$ there exists $a_j^{(\ell)}\ne 0$.
Next we use the same scheme, representing the measure as a convolution \eqref{eq:split2}. For every $\ell\in [s]$, fixing all the eigenvalues of $A$, but 
$\theta_\ell$, compatible with the requirements of Theorem~\ref{th:CoSh}, we obtain a zero-dimensional set of exceptions for $\theta_\ell$, such that
in its complement both the exponential separation condition in the $\ell$-th coordinate holds, and uniform Fourier power decay holds in the directions having
the $\ell$'s coordinate dominant. This yields a zero Lebesgue measure set of exceptions in the space of the corresponding similitudes. Their union over 
$\ell$ is the desired exceptional set.
\end{proof}


\section{Preliminary steps of the proof of Fourier decay} \label{sec:prelim} 

By the definition of the self-similar measure,
$$
\muhat(\xi) = \sum_{j=0}^m p_j \int e^{-2\pi i \langle \xi, Ax + a_j\rangle}\, d\mu(x) = \Bigl(\sum_{j=0}^m p_j  e^{ -2\pi i \langle \xi,  a_j\rangle}\Bigr) \muhat(A^t\xi),
$$
where $A^t$ is the matrix transpose of $A$. Iterating we obtain
\begin{equation} \label{eq1}
\muhat(\xi) = \prod_{n=0}^\infty\left(\sum_{j=0}^m p_j  e^{-2\pi i \langle (A^t)^n\xi,  a_j\rangle}\right) = \prod_{n=0}^\infty\left(\sum_{j=0}^m p_j  e^{-2\pi i \langle \xi,  A^n a_j\rangle}\right),
\end{equation}
where the infinite product converges, since $\|A^n\| \to 0$ exponentially fast.
Let $\mu = \mu(\thet^{-1}\Ok, \Dk, \pb)$.
For $\xi\in \R^d$, with $\|\xi\|_\infty \ge 1$, consider $\eta(\xi) = (A^t)^{N(\xi)} \xi$, where $N(\xi)\ge 0$ is maximal, such that $\|\eta(\xi)\|_\infty \ge 1$. 
Then $\|\eta(\xi)\|_\infty \in [1, \thet)$ and \eqref{eq1} implies for $\eta = \eta(\xi,A)$:
\begin{equation} \label{eq2}
\muhat(\xi) = \muhat(\eta)\cdot \prod_{n=1}^{N(\xi)} \left(\sum_{j=0}^m p_j  e^{-2\pi i \langle \eta,\,  A^{-n} a_j\rangle}\right)
\end{equation}

The following is an elementary inequality; for the proof, see e.g., \cite[Lemma 2.1]{Sol22}.

\begin{lemma} \label{lem:elem} 
Let $\bp = (p_0,\ldots,p_m)>0$ be a probability vector and $\alpha_0=0,\ \alpha_j \in \R$, $j=1,\ldots,m$. Denote $\eps = \min_j p_j$ and write 
${\|x\|}_{\scriptscriptstyle \R/\Z}=\dist(x,\Z)$. Then
for any $k\le m$,
\beq \label{elem}
\left|\sum_{j=0}^m p_j e^{-2\pi i \alpha_j}\right| \le 1 - 2\pi\eps {\|\alpha_k\|}_{\scr}^2.
\eeq
\end{lemma}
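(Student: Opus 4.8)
The plan is to single out the two digits $j=0$ and $j=k$. Since $\alpha_0=0$, the $j=0$ term contributes the positive real number $p_0$, so writing $S:=\sum_{j=0}^m p_j e^{-2\pi i\alpha_j}$ and estimating the remaining terms by the triangle inequality,
\[
|S|\ \le\ \bigl|p_0+p_k e^{-2\pi i\alpha_k}\bigr|+\sum_{j\neq 0,k}p_j\ =\ \bigl|p_0+p_k e^{-2\pi i\alpha_k}\bigr|+\bigl(1-p_0-p_k\bigr),
\]
so it suffices to bound $(p_0+p_k)-\bigl|p_0+p_k e^{-2\pi i\alpha_k}\bigr|$ from below by $2\pi\eps\,\|\alpha_k\|_{\scr}^2$. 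Since $\cos(2\pi\alpha_k)=\cos\bigl(2\pi\|\alpha_k\|_{\scr}\bigr)$ (cosine is even and $2\pi$-periodic), one has
\[
\bigl|p_0+p_k e^{-2\pi i\alpha_k}\bigr|^2=p_0^2+p_k^2+2p_0p_k\cos(2\pi\alpha_k)=(p_0+p_k)^2-4p_0p_k\sin^2\!\bigl(\pi\|\alpha_k\|_{\scr}\bigr),
\]
so the claim reduces to the elementary inequality
\[
(p_0+p_k)-\sqrt{(p_0+p_k)^2-4p_0p_k\sin^2\theta}\ \ge\ 2\eps\,(1-\cos\theta),\qquad \theta:=\pi\|\alpha_k\|_{\scr}\in\bigl[0,\tfrac\pi2\bigr],
\]
combined with $1-\cos\theta\ge\theta^2/\pi$ on $[0,\tfrac\pi2]$, which via $\theta=\pi\|\alpha_k\|_{\scr}$ converts $2\eps(1-\cos\theta)$ into exactly $2\pi\eps\,\|\alpha_k\|_{\scr}^2$.

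For the displayed inequality I would move $2\eps(1-\cos\theta)$ to the right-hand side (the left side is then $\ge 2\eps\cos\theta\ge 0$) and square; after substituting $\sin^2\theta=(1-\cos\theta)(1+\cos\theta)$ and cancelling the nonnegative factor $1-\cos\theta$, it becomes
\[
p_0p_k(1+\cos\theta)+\eps^2(1-\cos\theta)\ \ge\ \eps(p_0+p_k),
\]
which follows at once from the two facts $(p_0-\eps)(p_k-\eps)\ge 0$ (so $p_0p_k\ge\eps(p_0+p_k)-\eps^2$) and $p_0+p_k\ge 2\eps$. For the bound $1-\cos\theta\ge\theta^2/\pi$ on $[0,\tfrac\pi2]$ I would use $\sin\theta\ge\tfrac2\pi\theta$ there (concavity of $\sin$) and integrate from $0$. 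The case $k=0$ is trivial, since then the right-hand side of the lemma equals $1$.

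The argument is entirely elementary; the only delicate point is obtaining the \emph{sharp} constant $2\pi$. A careless chain --- estimating $\sqrt{1-x}\le 1-\tfrac x2$, then $\tfrac{p_0p_k}{p_0+p_k}\ge\tfrac\eps2$, then $1-\cos x\ge\tfrac{2}{\pi^2}x^2$ --- only yields the weaker constant $4\eps$ in place of $2\pi\eps$. The two refinements that recover $2\pi$ are: keep $(p_0+p_k)-\sqrt{\,\cdot\,}$ intact and reduce it through the factorization $(p_0-\eps)(p_k-\eps)\ge 0$ rather than through a Taylor estimate of the square root; and use the sharp linear lower bound $\sin\theta\ge\tfrac2\pi\theta$, valid on all of $[0,\tfrac\pi2]$, rather than a cruder one. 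That is the step I expect to require the most care.
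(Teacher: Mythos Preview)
Your argument is correct and fully self-contained. The paper does not actually supply a proof of this lemma; it merely cites \cite[Lemma 2.1]{Sol22}, so there is no in-paper proof to compare against. Your route---isolate the pair $\{0,k\}$, reduce to the two-term inequality $(p_0+p_k)-\bigl|p_0+p_k e^{-2\pi i\alpha_k}\bigr|\ge 2\eps(1-\cos\theta)$ via the factorization $(p_0-\eps)(p_k-\eps)\ge0$, and then recover the sharp constant through Jordan's inequality integrated to $1-\cos\theta\ge\theta^2/\pi$---is exactly the kind of elementary derivation one expects here, and each step checks out (in particular, the squaring step is legitimate since $a-2\eps(1-\cos\theta)\ge 2\eps\cos\theta\ge0$ on $[0,\tfrac\pi2]$, and the final reduced inequality $b(1+\cos\theta)+\eps^2(1-\cos\theta)\ge\eps a$ follows from $b\ge\eps a-\eps^2$ and $a\ge 2\eps$ as you indicate).
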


Thus, using that  $|\muhat(\eta)|\le 1$, we obtain
\beq \label{imp1}
|\muhat(\xi)| \le  \prod_{n=1}^{N(\xi)} \left(1-2\pi \eps \max_{1\le j \le m} 
\Bigl\| \langle \eta, A^{-n} a_j\rangle \Bigr\|_{\scr}^2  \right) =: \Psi(A,\Dk,\eps,\xi). 
\eeq

We summarize the above discussion in the following

\begin{lemma}[Reduction] \label{lem:redu}
In order to show that $\mu(A,\Dk,\pb)\in \Dsc_d$, with $\pb>0$, it suffices to verify that there exists $\gam>0$, such that 
$$
\Psi(A, \Dk',\eps,\xi) = O(|\xi|^{-\gam}),\ \ |\xi|\to \infty,
$$
for any $\Dk' \subseteq \Dk$ with $0 \in \Dk'$ and $\eps=\min_j p_j$.
\end{lemma}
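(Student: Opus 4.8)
I would argue as follows; in fact the essential computation is already carried out in the discussion preceding the statement, so the proof is mainly a matter of organizing it. The starting point is the product formula \eqref{eq1}, obtained by iterating the self-similarity identity $\widehat\mu(\xi)=\bigl(\sum_{j} p_j e^{-2\pi i\langle\xi,a_j\rangle}\bigr)\,\widehat\mu(A^t\xi)$, together with its localized version \eqref{eq2}: for $\xi$ with $\|\xi\|_\infty\ge 1$ we set $\eta=\eta(\xi)=(A^t)^{N(\xi)}\xi$ with $N(\xi)$ maximal subject to $\|\eta\|_\infty\ge 1$, and then $\widehat\mu(\xi)=\widehat\mu(\eta)\prod_{n=1}^{N(\xi)}\bigl(\sum_{j=0}^m p_j e^{-2\pi i\langle\eta,\,A^{-n}a_j\rangle}\bigr)$. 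Since $\pb$ is a probability vector, $|\widehat\mu(\eta)|\le\widehat\mu(0)=1$; and applying Lemma~\ref{lem:elem} to each of the $N(\xi)$ factors, with $\alpha_j=\langle\eta,A^{-n}a_j\rangle$ (note $\alpha_0=0$ because $a_0=0$) and $\eps=\min_j p_j$, one obtains the pointwise bound \eqref{imp1}, i.e.\ $|\widehat\mu(\xi)|\le\Psi(A,\Dk,\eps,\xi)$.

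The only further remark needed is the monotonicity of $\Psi$ under passing to a smaller digit set: if $\Dk'\subseteq\Dk$ with $0\in\Dk'$, then in each factor of $\Psi(A,\Dk',\eps,\xi)$ the maximum $\max_{a_j\in\Dk'\setminus\{0\}}\|\langle\eta,A^{-n}a_j\rangle\|_{\scr}^2$ runs over a subset of the indices, hence is no larger than the corresponding maximum for $\Dk$; therefore $\Psi(A,\Dk,\eps,\xi)\le\Psi(A,\Dk',\eps,\xi)$ for every such $\Dk'$. Combining this with \eqref{imp1} gives $|\widehat\mu(\xi)|\le\Psi(A,\Dk',\eps,\xi)$ for all $\Dk'\subseteq\Dk$ containing $0$. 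Hence, if for some $\gam>0$ one has $\Psi(A,\Dk',\eps,\xi)=O(|\xi|^{-\gam})$ as $|\xi|\to\infty$ for such a $\Dk'$ — whether $\Dk'=\Dk$ or any subset one prefers to work with — then $|\widehat\mu(\xi)|=O(|\xi|^{-\gam})$, so that $\mu(A,\Dk,\pb)\in\Dsc_d(\gam)\subseteq\Dsc_d$. This is precisely the asserted reduction.

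I do not expect any genuine obstacle: the statement is a bookkeeping lemma whose only non-trivial ingredient, the elementary inequality of Lemma~\ref{lem:elem}, is quoted from \cite{Sol22}. The reason for phrasing the sufficient condition in terms of an arbitrary subdigit set $\Dk'\ni 0$, rather than just $\Dk$ itself, is flexibility in the Erd\H{o}s--Kahane arguments used later: there one typically isolates a single digit $a_j$ — chosen according to which coordinate, or which direction of $\xi$, is dominant, using the spanning/cyclic hypothesis on $\Dk$ — and bounds the Hausdorff dimension of the set of ``bad'' parameters for which $\Psi(A,\{0,a_j\},\eps,\xi)$ fails to have power decay; the monotonicity above then transports that decay back to the full self-similar measure.
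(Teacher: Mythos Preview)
Your proof is correct and matches the paper's approach exactly: the lemma is stated in the paper as a summary of the preceding discussion (equations \eqref{eq1}--\eqref{imp1}), and the only extra ingredient needed --- the monotonicity $\Psi(A,\Dk,\eps,\xi)\le\Psi(A,\Dk',\eps,\xi)$ for $0\in\Dk'\subseteq\Dk$ --- is precisely what you supply. Your closing remarks on why the freedom to choose $\Dk'$ matters for the later Erd\H{o}s--Kahane arguments are also accurate.
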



\section{Proof of Theorem~\ref{th:new1}}

Recall that now $\Dk = \{0,a_1,\ldots,a_m\}$ is a spanning set for $\R^d$. By Lemma~\ref{lem:redu}, we can assume that $m=d$
and  obtain from \eqref{imp1}:
\beq \label{imp11}
|\muhat(\xi)| \le 
=\prod_{n=1}^{N(\xi)} \left(1-2\pi \eps \max_{1\le j \le d} 
\Bigl\| \langle  a_j, \thet^n(\Ok^t)^{-n}\eta\rangle \Bigr\|_{\scr}^2  \right).
\eeq

As already mentioned above, the proof uses a variant of the Erd\H{o}s-Kahane method and has some common features with the proofs in
\cite{ShSol16,ShSol16b} and other applications of this technique.

We start by defining 
the exceptional set in Theorem~\ref{th:new1}. For $\delta, \rho >0$ and $1< B_1 < B_2$ we first define the ``bad set'' at scale $N$:
\beq \label{exc1}
E_N(\delta, \rho, B_1,B_2) = \left\{\thet\in [B_1, B_2]:\!\!\! \max_{\eta:\ 1 \le \|\eta\|_\infty \le B_2} \frac{1}{N} 
\Bigl|\Bigl\{n \in [N]:\!\! \max_{1\le j \le d} \Bigl\| \langle  a_j, \thet^n(\Ok^t)^{-n}\eta\rangle \Bigr\|_{\scr} \!\!\!< \rho\Bigr\}\Bigr| > 1-\delta \right\}.
\eeq
The exceptional set is given by
$$
\Ek(\delta,\rho,B_1,B_2)= \bigcap_{N_0=1}^\infty \bigcup_{N=N_0}^\infty E_{N}(\delta,\rho,B_1,B_2).
$$
Theorem~\ref{th:new1} is immediate from the next two propositions. We assume that the orthogonal matrix $\Ok$ and a spanning digit set
$\Dk = \{0,a_1,\ldots,a_d\}$ are fixed. All the constants below depend on them.

\begin{prop} \label{prop-excep00} Let $1  < B_1 < B_2$.
For any $\delta>0$, and $\rho\in (0,\half)$, we have $\mu(\thet^{-1}\Ok,\Dk,\pb)\in \Dsc_d(\gam)$ for some $\gam>0$,
whenever $\thet\not\in \Ek(\delta,\rho,B_1,B_2)$.
The power $\gam$ depends only on $\delta,\rho, B_2$, and $\eps = \min_j p_j$.
\end{prop}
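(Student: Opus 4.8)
The plan is to combine Lemma~\ref{lem:redu} with a pigeonhole/covering argument over the scales at which $\eta(\xi,A)$ passes through the fundamental annulus $\{1\le\|\cdot\|_\infty<\thet\}$. Fix $\thet\notin\Ek(\delta,\rho,B_1,B_2)$, so by definition of the exceptional set there is $N_0=N_0(\thet)$ with $\thet\notin E_N(\delta,\rho,B_1,B_2)$ for all $N\ge N_0$. First I would observe, following the reduction in Section~\ref{sec:prelim}, that it suffices to bound $\Psi(\thet^{-1}\Ok,\Dk',\eps,\xi)$; by Lemma~\ref{lem:redu} we may take $\Dk'=\Dk$ with $m=d$, as in \eqref{imp11}. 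Given $\xi$ with $\|\xi\|_\infty$ large, set $\eta=\eta(\xi,A)$ and $N=N(\xi)$, so that $\|\eta\|_\infty\in[1,\thet)\subset[1,B_2)$ and $N\to\infty$ as $|\xi|\to\infty$; more precisely $N\ge\frac{\log\|\xi\|_\infty}{\log\thet}-C$ for a constant depending only on $B_2$, since each application of $(A^t)^{-1}=\thet(\Ok^t)^{-1}$ expands the $\ell^\infty$-norm by a factor in a bounded range and $\|\Ok^{-n}\|$ is an isometry up to the norm-equivalence constant in $\R^d$.

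For $N\ge N_0$, since $\thet\notin E_N$, the maximum over all admissible $\eta$ in the definition \eqref{exc1} is $\le 1-\delta$; in particular for our specific $\eta$ (which satisfies $1\le\|\eta\|_\infty\le B_2$) we get that the set of $n\in[N]$ with $\max_{1\le j\le d}\|\langle a_j,\thet^n(\Ok^t)^{-n}\eta\rangle\|_{\scr}\ge\rho$ has cardinality at least $\delta N$. Plugging this into \eqref{imp11}, each such $n$ contributes a factor $\le 1-2\pi\eps\rho^2$, so
$$
|\muhat(\xi)|\le \prod_{n=1}^{N(\xi)}\Bigl(1-2\pi\eps\max_{1\le j\le d}\|\langle a_j,\thet^n(\Ok^t)^{-n}\eta\rangle\|_{\scr}^2\Bigr)\le (1-2\pi\eps\rho^2)^{\delta N(\xi)}.
$$
Using $N(\xi)\gtrsim\log|\xi|/\log B_2$ and writing $(1-2\pi\eps\rho^2)^{\delta N(\xi)}=\exp(\delta N(\xi)\log(1-2\pi\eps\rho^2))$ converts this into a genuine power bound $|\muhat(\xi)|=O(|\xi|^{-\gam})$ with $\gam=\frac{-\delta\log(1-2\pi\eps\rho^2)}{\log B_2}>0$, up to absorbing the finitely many scales $N<N_0$ (which only affect the implied constant, via $|\muhat|\le 1$) and adjusting $\gam$ for the additive constant $C$. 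This $\gam$ depends only on $\delta,\rho,B_2,\eps$, as claimed.

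The only subtlety — and the step I would write out carefully — is the bookkeeping translating between the frequency variable $\xi$ and the pair $(\eta,N)$: one must check that $N(\xi)\to\infty$ quantitatively in $|\xi|$ and that the truncation to $n\in[N]$ in \eqref{exc1} matches the product range $n=1,\dots,N(\xi)$ in \eqref{imp11} (a harmless off-by-one, since we may as well start the product at $n=1$). There is no serious obstacle here; the real content is packaged into Proposition~\ref{prop-excep00}'s companion statement bounding $\Dh(\Ek(\delta,\rho,B_1,B_2))$, which is where the Erd\H{o}s–Kahane machinery does its work. Finally, Theorem~\ref{th:new1} follows by letting $\delta,\rho\to 0$ along a sequence and $B_1\downarrow 1$, $B_2\uparrow\infty$, taking a countable union of zero-dimensional exceptional sets.
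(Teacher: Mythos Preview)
Your proposal is correct and follows essentially the same route as the paper: from $\thet\notin E_N(\delta,\rho,B_1,B_2)$ you extract at least $\delta N(\xi)$ ``good'' indices in the product \eqref{imp11}, each contributing a factor $\le 1-2\pi\eps\rho^2$, and convert the resulting exponential bound into a power bound via $N(\xi)\gtrsim\log|\xi|/\log B_2$, arriving at the same exponent $\gam=-\delta\log(1-2\pi\eps\rho^2)/\log B_2$. Your remark that the $\ell^\infty$-norm is only preserved by $\Ok^t$ up to a dimensional constant is in fact slightly more careful than the paper's own write-up, but of course this only affects the implied constant.
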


\begin{sloppypar}
\begin{prop} \label{prop-dim00} For any  $\beta>0$ and $1<B_1<B_2$ there 
exist  $\delta>0$ and $\rho \in (0,\half)$ such that $\Dh(\Ek(\delta,\rho,B_1,B_2))\le\beta$.
\end{prop}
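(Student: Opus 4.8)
The plan is to carry out a lattice-valued version of the Erd\H{o}s--Kahane argument \cite{kahane}. By Lemma~\ref{lem:redu} we may assume $m=d$ and that $\Dk=\{0,a_1,\ldots,a_d\}$ spans $\R^d$; then the linear map $\Phi\colon x\mapsto(\langle a_1,x\rangle,\ldots,\langle a_d,x\rangle)$ is invertible, and $L:=\Phi^{-1}(\Z^d)$ is a full-rank lattice depending only on $\Dk$, with shortest nonzero vector of length $\lambda_1=\lambda_1(L)$. The inequality $\max_{1\le j\le d}{\|\langle a_j,x\rangle\|}_{\scr}<\rho$ implies $\dist(x,L)<c_0\rho$, where $c_0=c_0(\Dk)$; once $\rho<\rho_0(\Dk):=\lambda_1/(2c_0)$ such an $x$ has a unique nearest lattice point $[x]_L$. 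Using $(\Ok^t)^{-n}=\Ok^n$ (valid since $\Ok\in O(d,\R)$), I set $v_n=v_n(\thet,\eta):=\thet^n\Ok^n\eta$, so that $v_{n+1}=\thet\Ok v_n$ and $\|v_n\|=\thet^n\|\eta\|\ge B_1^n$. With this notation, if $\thet\in E_N(\delta,\rho,B_1,B_2)$ then there exist $\eta$ with $1\le\|\eta\|_\infty\le B_2$ and a set $G\subseteq[N]$ of \emph{good} indices, $|G|>(1-\delta)N$, with $\dist(v_n,L)<c_0\rho$ for all $n\in G$.

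Given such $(\thet,\eta)$ I attach the symbolic code $(\ell_n)_{0\le n\le N}$ in $L$: $\ell_n:=[v_n]_L$ at good $n$, and a lattice point nearest $v_n$ otherwise. The engine is the identity $v_{n+1}=\thet\Ok v_n$ together with the observation that $\thet\mapsto\thet\Ok\ell$ traces a ray of speed $\|\Ok\ell\|=\|\ell\|$. If $n$ and $n+1$ are both good then $\|\thet\Ok\ell_n-\ell_{n+1}\|<c_1\rho$, which confines $\thet$ to an interval of length $\lesssim\rho/\|\ell_n\|\lesssim\rho B_1^{-n}$. The crucial \emph{locking} phenomenon is that once $\thet$ has been confined to an interval $I$ with $|I|\cdot\|\ell_n\|\lesssim\rho$, the region swept by $v_{n+1}$ as $\thet$ ranges over $I$ (with $v_n=\ell_n+O(\rho)$) has diameter $\lesssim\rho<\lambda_1/2$, so it, thickened by $c_0\rho$, meets $L$ in at most one point; hence every later good index forces a \emph{unique} continuation $\ell_{n+1}$, the bound $|I|\cdot\|\ell\|\lesssim\rho$ is self-reproducing, and $|I|$ keeps shrinking like $\rho B_1^{-n}$. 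A bad index can destroy the lock; but $v_{n+1}$ then still lies within $O_{\Dk,B_2}(1)$ of the point $\thet\Ok\ell_n$ with $\thet$ in a controlled interval, so there are only $O_{\Dk,B_2}(1)$ possibilities for the code symbol, and inspecting the recursion shows the lock is re-established after the next two consecutive good indices (this needs $\rho$ also below a second threshold $\rho_1(\Dk,B_2)$). Since $|[N]\setminus G|\le\delta N$, the scheme is locked at every index outside an initial stretch and $O(1)$-windows around the bad indices --- a set of size $O(\delta N)$ altogether. Therefore the number of codes of length $N$ is at most $K^{C\delta N}$, with $K=K(\Ok,\Dk,B_1,B_2)\ge1$ and $C=C(B_1)$ independent of $\delta,\rho,N$ (I absorb into $K^{C\delta N}$ the $O_{\Dk,B_2}(1)$ choices of $\ell_0$, which also takes care of the continuum of $\eta$, as only $[v_0]_L$ and the Voronoi cell of $v_0$ enter the recursion); and each code confines $\thet$ to an interval of length at most $C'\rho\,B_1^{-(1-C\delta)N}$.

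Consequently $E_N(\delta,\rho,B_1,B_2)$ is covered by at most $K^{C\delta N}$ intervals of length at most $C'\rho\,B_1^{-(1-C\delta)N}$, whence
$$
\Hk^\beta\!\bigl(E_N(\delta,\rho,B_1,B_2)\bigr)\ \le\ K^{C\delta N}\bigl(C'\rho\,B_1^{-(1-C\delta)N}\bigr)^\beta\ =\ (C'\rho)^\beta\Bigl(K^{C\delta}\,B_1^{-\beta(1-C\delta)}\Bigr)^{N}.
$$
Fixing any $\rho\in\bigl(0,\min\{\rho_0(\Dk),\rho_1(\Dk,B_2),\half\}\bigr)$ and then choosing $\delta=\delta(\beta,B_1)>0$ small enough that $C\delta\log K<\beta(1-C\delta)\log B_1$ makes the bracket strictly less than $1$; since $\Ek(\delta,\rho,B_1,B_2)\subseteq\bigcup_{N\ge N_0}E_N(\delta,\rho,B_1,B_2)$ for every $N_0$, summing the resulting geometric tail gives $\Hk^\beta(\Ek(\delta,\rho,B_1,B_2))=0$, that is, $\Dh(\Ek(\delta,\rho,B_1,B_2))\le\beta$, as required.

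The step I expect to be the main obstacle is the locking claim, i.e.\ getting \emph{subexponential} branching $K^{O(\delta N)}$ instead of $K^{N}$. This rests on showing that the nonlinear map $\thet\mapsto v_n(\thet,\eta)=\thet^n\Ok^n\eta$ is expanding with derivative of size $\asymp\thet^n$, so that two consecutive code symbols pin $\thet$ down to precision $\rho\,\thet^{-n}$, together with the bookkeeping that a bad index delocks the scheme only boundedly often and at bounded cost. Handling the $\sup$ over $\eta$ (absorbed into the bounded choice of $\ell_0$) and the fact that the good indices need not begin at $n=0$ is routine for this technique, but must be carried out so that the final constants depend on $\Ok,\Dk,B_1,B_2$ only and not on $\delta,\rho,N$.
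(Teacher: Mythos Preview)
Your approach is essentially the paper's: a lattice/integer-vector Erd\H{o}s--Kahane argument. Your code symbols $\ell_n=[v_n]_L$ are a reparametrization of the paper's integer vectors $\vec K_n$ (the nearest integer point to $T_\Dk v_n$, where $T_\Dk$ has rows $a_j^t$), and your locking recursion $\ell_{n+1}\approx\thet\Ok\ell_n$ is exactly the paper's approximation $\vec K_{n+1}\approx\thet\,T_\Dk(\Ok^t)^{-1}T_\Dk^{-1}\vec K_n$. The paper reaches the covering statement (Proposition~\ref{propa-EK00}) and then sums the resulting Hausdorff-$\beta$ bounds, just as you do.

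One genuine counting detail is missing. Your bound ``the number of codes of length $N$ is at most $K^{C\delta N}$'' is not quite right: for a \emph{fixed} placement $J$ of the $\le\delta N$ bad indices the branching is indeed $K^{O(\delta N)}$, but the placement itself is not determined by the code-so-far, and different placements produce different codes. One must therefore also sum over the $\binom{N}{\lfloor\delta N\rfloor}$ choices of $J$, giving
\[
\#\{\text{codes}\}\ \le\ O_{B_1,B_2}(1)\cdot\binom{N}{\lfloor\delta N\rfloor}\cdot K^{O(\delta N)}\ =\ \exp\bigl(O(\delta\log(1/\delta))\,N\bigr),
\]
which is precisely the paper's bound in Proposition~\ref{propa-EK00}. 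Your final step survives this correction unchanged: the condition for the geometric tail to converge becomes $C_0\,\delta\log(1/\delta)<\beta\log B_1$, which is still satisfiable for small $\delta$. (Incidentally, the paper uses the estimate $|\thet-\|\vec L_{n+1}\|/\|\vec L_n\||\le C_1 B_1^{-n}$, valid for \emph{all} $n$ since $\|\vec\eps_n\|\le\tfrac12$, to get covering intervals of length $B_1^{-N}$ rather than your $B_1^{-(1-C\delta)N}$; either works.)
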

\end{sloppypar}

The proof of Proposition~\ref{prop-dim00} is based on the following

\begin{prop} \label{propa-EK00}
For any $1< B_1< B_2$ there exist  positive constants $\rho=\rho(B_1,B_2)$, $C_0 = C_0(B_1,B_2)$, and $n_1 = n_1(B_1,B_2)\in \N$,
such that, for any $N\ge n_1$ and $\delta\in (0,\half)$, the 
set $E_{N}(\delta,\rho,B_1,B_2)$ can be covered
by $\exp\bigl(C_0\cdot\delta \log(1/\delta)N\bigr)$ disks of diameter $B_1^{-N}$.
\end{prop}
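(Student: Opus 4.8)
The plan is to run the Erd\H{o}s--Kahane recursion on the ``digit sequences'' of $\thet$ relative to the (vector-valued) orbit $\thet^n(\Ok^t)^{-n}\eta$. First I would fix $\eta$ with $1\le \|\eta\|_\infty\le B_2$; the covering bound must be uniform in $\eta$, and since the condition defining $E_N$ involves a supremum over $\eta$, I will in the end cover $E_N$ by a union over a suitable net of $\eta$-values, absorbing the (polynomially many) net points into the constant $C_0$ --- this is routine once the bound for fixed $\eta$ is in place. For fixed $\eta$, set $v_n(\thet) := \thet^n (\Ok^t)^{-n}\eta \in \R^d$. The key algebraic point is that $(\Ok^t)^{-n} = \Ok^n$ is again orthogonal, so $\|v_n(\thet)\|$ grows like $\thet^n$; and more importantly $v_{n+1}(\thet) = \thet\,\Ok\, v_n(\thet)$, a linear recursion with bounded (orthogonal, scaled) transition. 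Writing $\langle a_j, v_n(\thet)\rangle$ and recording the nearest integer $c_{j,n} := \lfloor \langle a_j, v_n(\thet)\rangle\rceil$, I would show that the tuple of fractional parts $\bigl(\{\langle a_j, v_n\rangle\}\bigr)_{j\le d}$ together with a bounded amount of extra information determines the same tuple at step $n+1$ up to a controlled number of choices. Concretely, because $\Dk$ spans $\R^d$ the vectors $a_1,\dots,a_d$ form a basis, so knowing $\langle a_j, v_n\rangle$ for all $j$ determines $v_n$; then $v_{n+1} = \thet\Ok v_n$ depends on $v_n$ and on the single unknown scalar $\thet\in[B_1,B_2]$, which enters linearly, so once $\thet$ is known to within $B_1^{-n}$ (say) the vector $v_{n+1}$ is pinned down to within a bounded factor, hence the integers $c_{j,n+1}$ lie in a bounded-size set.

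The recursion I would set up is the standard one: let $y_n = (\langle a_1,v_n\rangle,\dots,\langle a_d,v_n\rangle)$ and $z_n = y_n - c_n$ its vector of fractional parts (coordinates in $(-\tfrac12,\tfrac12]$). Writing $\thet = \thet_n + r_n$ where $\thet_n$ is a suitable truncated approximation determined by the data up to time $n$ and $|r_n|$ is exponentially small, one gets an expression of the form $y_{n+1} = \thet\, M\, y_n = \thet_n M y_n + r_n M y_n$, where $M$ is the fixed matrix expressing $v_{n+1}=\thet^{-1}\cdot(\text{linear map})\cdot v_n$ in the $a_j$-coordinates; since $\|v_n\|\asymp \thet^n\le B_2^n$ the error term $r_n M y_n$ is bounded by $C |r_n| B_2^n$, which is $\le 1/4$ provided $\thet_n$ approximates $\thet$ to accuracy $\asymp (B_2/B_1)^{-n}\cdot B_1^{-n}$ or better --- this dictates the choice of $B_1$ small enough relative to $B_2$, or rather one rescales and works with $B_1^{-N}$-disks as in the statement. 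Given $z_n$ and $c_{n}$, the value $\thet_n M y_n \bmod \Z^d$ is determined, so $z_{n+1}$ lies within $1/4$ of a determined point, hence $c_{n+1}$ takes at most a bounded number $K=K(B_1,B_2)$ of values. Thus the number of admissible integer sequences $(c_n)_{n\le N}$ is at most $K^N$ --- but this is the trivial bound. The improvement comes from the defining property of $E_N$: for $\thet\in E_N(\delta,\rho,B_1,B_2)$, for all but at most $\delta N$ indices $n$ we have $\max_j \|z_n^{(j)}\|_{\scr} < \rho$, i.e. all coordinates of $z_n$ are $\rho$-close to $0$. At such a ``good'' step the point $\thet_n M y_n \bmod \Z^d$ is itself within $O(\rho)$ of a lattice point (using that $z_n$ is small and the error is small), so $c_{n+1}$ is \emph{uniquely} determined --- there is only one choice. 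Only at the at most $\delta N$ ``bad'' steps do we pay the factor $K$. Hence the number of admissible sequences $(c_n)_{n\le N}$ is at most $\binom{N}{\le \delta N} K^{\delta N} \le \exp(C_0 \delta\log(1/\delta) N)$ for a suitable $C_0=C_0(B_1,B_2)$, using $\binom{N}{\le\delta N}\le \exp(H(\delta)N)$ and $H(\delta)\le C\delta\log(1/\delta)$.

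Finally, each admissible integer sequence $(c_n)_{n\le N}$ confines $\thet$ to a set of diameter $\lesssim B_1^{-N}$: indeed, from $y_n = c_n + z_n$ with $|z_n|\le 1/2$ and the recursion $y_{n+1}=\thet M y_n$, the sequence $(c_n)$ together with $n=N$ determines $\thet$ up to an error governed by the contraction in the ``inverse'' direction --- equivalently, two values of $\thet$ giving rise to the same $(c_n)_{n\le N}$ must agree to within $C B_1^{-N}$, because their $v_N$'s agree to within $O(1)$ while $\|v_N\|\asymp \thet^N\ge B_1^N$ and $v_N$ depends on $\thet$ with derivative of size $\asymp N\thet^{N-1}$. (This is the usual endgame of the Erd\H{o}s--Kahane argument; the spanning hypothesis is what makes $v_n$ recoverable from the scalar data.) Rescaling if necessary so that the relevant net of $\eta$'s and the constants line up, we cover $E_N(\delta,\rho,B_1,B_2)$ by $\exp(C_0\delta\log(1/\delta)N)$ disks of diameter $B_1^{-N}$, which is the assertion. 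The main obstacle I anticipate is making the ``good step $\Rightarrow$ unique successor digit'' implication genuinely uniform in $\eta$ and clean: one must choose $\rho=\rho(B_1,B_2)$ small enough that $\rho$-smallness of $z_n$ plus the exponentially small $r_n$-error still leaves $\thet_n M y_n \bmod \Z^d$ strictly inside a single unit cell, and this interacts with how accurately $\thet_n$ is defined at each step --- a bookkeeping issue that is standard in this circle of arguments but needs care to state correctly.
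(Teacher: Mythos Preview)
Your overall architecture is right and matches the paper: run an Erd\H{o}s--Kahane recursion on the nearest-integer vectors, get a unique successor at ``good'' steps and boundedly many at ``bad'' steps, and count. But there is a real gap in how you handle the unknown $\eta$. The claim that a net with ``polynomially many'' points suffices is false: the condition $\bigl\|\langle a_j,\thet^n(\Ok^t)^{-n}\eta\rangle\bigr\|_{\R/\Z}<\rho$ is only stable under perturbations of $\eta$ of size $O(\thet^{-n})$, so covering all $n\le N$ would require a net of spacing $\thet^{-N}$, i.e.\ $\thet^{dN}$ points, which destroys the bound. Relatedly, your recursion step says ``given $z_n$ and $c_n$\,'', but $z_n$ is not data you have; in the counting you only possess the integers $c_n$.

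The paper sidesteps the $\eta$-net entirely by making the approximation of $\thet$ \emph{intrinsic to the integer sequence}. Writing $T_\Dk$ for the matrix with rows $a_j^t$, one has $\thet^n\eta = (\Ok^t)^n T_\Dk^{-1}(\vec K_n+\vec\eps_n)$; setting $\vec L_n:=(\Ok^t)^n T_\Dk^{-1}\vec K_n$, the key estimate is
\[
\Bigl|\thet - \frac{\|\vec L_{n+1}\|}{\|\vec L_n\|}\Bigr| \le C_1\,\thet^{-n}\max\{\|\vec\eps_n\|,\|\vec\eps_{n+1}\|\},
\]
so the ratio $\|\vec L_{n+1}\|/\|\vec L_n\|$ --- computable from $\vec K_n,\vec K_{n+1}$ alone --- plays the role of your unspecified ``$\thet_n$''. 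Plugging this back gives $\bigl\|\vec K_{n+1} - \tfrac{\|\vec L_{n+1}\|}{\|\vec L_n\|}\,T_\Dk(\Ok^t)^{-1}T_\Dk^{-1}\vec K_n\bigr\|<C_2\max\{\|\vec\eps_n\|,\|\vec\eps_{n+1}\|\}$, from which the ``unique successor when errors $<\rho=(2C_2)^{-1}$'' and ``at most $M$ successors in general'' follow. The counting is then over integer sequences $(\vec K_n)_{n\le N}$, with no reference to $\eta$ at all, and the same ratio pins $\thet$ down to a $B_1^{-N}$-interval. Replace your net idea with this intrinsic construction and the argument goes through.
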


We first complete the proof of Proposition~\ref{prop-dim00}, assuming the last proposition. The symbol $\Hk^\beta$ denotes the 
$\beta$-dimensional Hausdorff measure.
By Proposition~\ref{propa-EK00}, for $N_0\ge n_1$,
$$
\Hk^\beta\left(\bigcup_{N=N_0}^\infty E_{N}(\delta,\rho,B_1,B_2)\right) \le 
\sum_{N=N_0}^\infty \exp(C_0\cdot\delta \log(1/\delta)N)\cdot B_1^{-\beta N} \to 0, \ \ N_0\to \infty,
$$
provided $\delta>0$ is so small that $\beta\log B_1 >C_0\cdot \delta \log(1/\delta)$. Thus $\Dh(\Ek(\delta,\rho,B_1,B_2))\le \beta$. \qed



It remains to prove Propositions \ref{prop-excep00} and \ref{propa-EK00}.

\begin{proof}[Proof of Proposition~\ref{prop-excep00}]
Suppose that $\thet\notin  \Ek(\delta,\rho,B_1,B_2)$. This implies that there exists $N_0\ge 1$
such that $\thet \notin E_N(\delta,\rho, B_1,B_2)$ for all $N\ge N_0$.

Let $\xi\in \R^d$ be such that $\|\xi\|_\infty \ge \vartheta^{N_0}$. As above, let $N = N(\xi)\ge 1$ be maximal,
such that $\|\eta\|_\infty\ge 1$ for $\eta = (A^t)^N\xi$.
Then $N \ge N_0$ and $\|\eta\|_\infty\le \vartheta\le B_2$.
 From the fact that $\thet \notin E_N(\delta,\rho, B_1,B_2)$ it follows that 
$$
 \max_{1\le j\le d}\,N^{-1}\cdot\Bigl|\Bigl\{n\in [N]:\ 
 \Bigl\| \langle  a_j, \thet^n(\Ok^t)^{-n}\eta\rangle \Bigr\|_{\scr} < \rho\Bigr\}\Bigr| 
\le 1-\delta.
$$
Then by \eqref{imp11},
$$
\bigl|\muhat(\thet^{-1}\Ok, \Dk, \pb)(\xi)\bigr| \le (1 - 2\pi\eps \rho^2)^{\lfloor\delta N\rfloor} \le (1 - 2\pi\eps \rho^2)^{\delta N-1}.
$$
By the definition of $N=N(\xi)$ we have 
$
\|\xi\|_\infty \le \vartheta^{N+1} \le B_2^{N+1};
$
 thus it follows that
$$
\bigl|\muhat(\thet^{-1}\Ok, \Dk, \pb)(\xi)\bigr| = 
O_{\mu}(1)\cdot {\|\xi\|}_{_{\scriptstyle{\infty}}}^{-\gam},
$$
for $\gam = -\delta\log (1 - 2\pi\eps \rho^2)/\log B_2$, and the proof is complete.
\end{proof}

\medskip

\begin{proof}[Proof of Proposition~\ref{propa-EK00}]
Let us write
\beq \label{def-Kn0}
\langle  a_j, \thet^n(\Ok^t)^{-n}\eta\rangle =  K_n^{(j)}+\eps^{(j)}_n,\ \ n\ge 0,\ \ 1\le j\le d.
\eeq
where $K^{(j)}_n\in \Z$ is the nearest integer to the expression in the left-hand side, so that, by definition,
$$
|\eps^{(j)}_n| = \Bigl\| \langle  a_j, \thet^n(\Ok^t)^{-n}\eta\rangle \Bigr\|_{\scr} \le 1/2.
$$ 
Note that \eqref{def-Kn0} may be viewed as a non-singular system of linear equations, by the assumption that $\{a_j\}_{j=1}^d$ forms a basis of $\R^d$. 
Denote by $T_\Dk$ the matrix whose rows are $a_j^t$. Then we obtain from \eqref{def-Kn0}:
\beq \label{eq:new0}
\vec{K}_n + \vec{\eps}_n = T_D \Ok^t)^{-n}\thet^n \eta \implies  \thet^n \eta = (\Ok^t)^{n} T_\Dk^{-1} (\vec{K}_n + \vec{\eps}_n),
\eeq
where $\vec{K}_n$ and $\vec{\eps}_n$ are vectors with components $K_n^{(j)}$ and $\eps^{(j)}_n$ respectively. Observe that
$$
{\|\vec\eps_n\|}_\infty = \max_{1\le j\le d} \Bigl\| \langle  a_j, \thet^n(\Ok^t)^{-n}\eta\rangle \Bigr\|_{\scr}.
$$
Denote
$$
\vec{L}_n = (\Ok^t)^{n} T_\Dk^{-1} \vec{K}_n.
$$
The idea for what comes next is to show that  $\thet \approx \|\vec{L}_{n+1}\|/\|\vec{L}_n\|$, and hence
$$
\vec{K}_{n+1}\approx \frac{\|\vec{L}_{n+1}\|}{\|\vec{L}_n\|}\cdot T_\Dk\,(\Ok^t)^{-1} \,T_\Dk^{-1} \vec{K}_n
$$
is a good approximation when $\max\{\|\vec{\eps}_n\|, \|\vec{\eps}_{n+1}\|\}$ is small. Below we are using the $\ell^\infty$ norm, unless stated
otherwise. Recall that $\Ok$ and $\Dk$ are fixed.

\begin{lemma} \label{lem:new1}
There exists $n_1= n_1(B_1,B_2) \in \N$, such that
\beq \label{eq:new1}
\left|\thet - \frac{\|\vec{L}_{n+1}\|}{\|\vec{L}_n\|}\right| \le C_1\cdot \thet^{-n} \max\{\|\vec{\eps}_n\|,\|\vec{\eps}_{n+1}\|\}
\le C_1\cdot B_1^{-n} \max\{\|\vec{\eps}_n\|,\|\vec{\eps}_{n+1}\|)\},\ \ n\ge n_1,
\eeq
and
\beq \label{eq:new2}
\left\|\vec{K}_{n+1} -  \frac{\|\vec{L}_{n+1}\|}{\|\vec{L}_n\|}\cdot T_\Dk\,(\Ok^t)^{-1} \,T_\Dk^{-1} \vec{K}_n\right\| 
< C_2\cdot \max\{\|\vec{\eps}_n\|,\|\vec{\eps}_{n+1}\|)\},\ \ n\ge n_1,
\eeq
where $C_1=C_1(B_1, B_2)$ and $C_2 = C_2(B_1, B_2)$.
\end{lemma}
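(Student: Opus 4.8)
The plan is to use nothing beyond the rewritten system \eqref{eq:new0}, which I would record in the form
\[
\thet^n \eta = \vec{L}_n + (\Ok^t)^{n} T_\Dk^{-1}\vec{\eps}_n, \qquad n\ge 0,
\]
and to extract both estimates from it, exactly along the heuristic sketched before the lemma. First I would note the elementary consequences. Since $\Ok\in O(d,\R)$, the map $(\Ok^t)^n$ is a Euclidean isometry, so $\|(\Ok^t)^{n} T_\Dk^{-1}\vec{\eps}_n\| \le c_0\|\vec{\eps}_n\|$, where $c_0$ depends only on $\|T_\Dk^{-1}\|$ and on $d$ (the latter to pass between $\ell^\infty$ and $\ell^2$). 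Because $\|\eta\|_\infty\in[1,\thet)$ we have $\thet^n \le \|\thet^n\eta\| \le B_2\thet^n$; combined with $\|\vec{\eps}_n\|\le\tfrac12$ and the triangle inequality this gives
\[
\tfrac12 B_1^{\,n}\ \le\ \thet^n - \tfrac{c_0}{2}\ \le\ \|\vec{L}_n\|\ \le\ B_2\thet^n + \tfrac{c_0}{2}\ \le\ (B_2+1)\thet^n
\]
for all $n\ge n_1$, where $n_1=n_1(B_1,B_2)$ is chosen large enough that $B_1^{\,n}\ge c_0$. Likewise $\|\vec{K}_n\|\le c_1\thet^n$ for $n\ge n_1$, directly from \eqref{eq:new0}. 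In short, $\|\vec{L}_n\|$ and $\|\vec{K}_n\|$ are both of exact exponential order $\thet^n$.

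For \eqref{eq:new1}, write $\|\vec{L}_{n+1}\| = \thet^{n+1}\|\eta\| + r_{n+1}$ and $\|\vec{L}_n\| = \thet^{n}\|\eta\| + r_n$ with $|r_k|\le c_0\|\vec{\eps}_k\|$ by the triangle inequality; then
\[
\frac{\|\vec{L}_{n+1}\|}{\|\vec{L}_n\|} \;=\; \thet\cdot\frac{1 + r_{n+1}/(\thet^{n+1}\|\eta\|)}{1 + r_n/(\thet^{n}\|\eta\|)}.
\]
Since $\|\eta\|\ge1$ and $\thet\ge B_1>1$, both correction quantities are at most $c_0 B_1^{-n}\max\{\|\vec{\eps}_n\|,\|\vec{\eps}_{n+1}\|\}$, which for $n\ge n_1$ does not exceed $\tfrac12$; using $\tfrac{1+x}{1+y} = 1 + O(|x|+|y|)$ for such $x,y$, together with $\thet\le B_2$, yields $\bigl|\thet - \|\vec{L}_{n+1}\|/\|\vec{L}_n\|\bigr| \le C_1\thet^{-n}\max\{\|\vec{\eps}_n\|,\|\vec{\eps}_{n+1}\|\}$, and the second inequality of \eqref{eq:new1} follows from $\thet^{-n}\le B_1^{-n}$.

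For \eqref{eq:new2}, the exact identity is the crux: applying $T_\Dk(\Ok^t)^{-1}T_\Dk^{-1}$ to $\vec{K}_n = T_\Dk(\Ok^t)^{-n}\thet^n\eta - \vec{\eps}_n$, multiplying by $\thet$, and subtracting from $\vec{K}_{n+1} = T_\Dk(\Ok^t)^{-(n+1)}\thet^{n+1}\eta - \vec{\eps}_{n+1}$ gives
\[
\vec{K}_{n+1} - \thet\, T_\Dk(\Ok^t)^{-1}T_\Dk^{-1}\vec{K}_n \;=\; -\vec{\eps}_{n+1} + \thet\, T_\Dk(\Ok^t)^{-1}T_\Dk^{-1}\vec{\eps}_n,
\]
whose right-hand side has norm $\le c_2\max\{\|\vec{\eps}_n\|,\|\vec{\eps}_{n+1}\|\}$ since $\thet\le B_2$. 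Replacing $\thet$ by $\|\vec{L}_{n+1}\|/\|\vec{L}_n\|$ introduces the extra term $\bigl(\thet - \|\vec{L}_{n+1}\|/\|\vec{L}_n\|\bigr)\,T_\Dk(\Ok^t)^{-1}T_\Dk^{-1}\vec{K}_n$, whose norm, by \eqref{eq:new1} together with $\|T_\Dk(\Ok^t)^{-1}T_\Dk^{-1}\|\le c_3$ and $\|\vec{K}_n\|\le c_1\thet^n$, is at most $C_1\thet^{-n}\max\{\ldots\}\cdot c_1 c_3\thet^n = c_4\max\{\|\vec{\eps}_n\|,\|\vec{\eps}_{n+1}\|\}$. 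Adding the two contributions gives \eqref{eq:new2} with $C_2 = c_2 + c_4$.

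The only delicate point — the ``main obstacle'', modest as it is — is the uniform lower bound $\|\vec{L}_n\|\ge\tfrac12 B_1^{\,n}$: everything downstream relies on $\|\vec{L}_n\|$ being genuinely comparable to $\thet^n$ and not much smaller (indeed possibly $\vec{K}_n=\vec{L}_n=0$ for small $n$), and it is exactly this that forces the restriction to $n\ge n_1(B_1,B_2)$. Beyond that, the argument is routine bookkeeping of the constants $C_1,C_2$ in terms of $\|T_\Dk^{-1}\|$, $B_1$, $B_2$, plus the harmless $\sqrt d$ factors incurred when switching between $\|\cdot\|_\infty$ and the Euclidean norm under the orthogonal maps.
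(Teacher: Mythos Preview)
Your proof is correct and follows essentially the same approach as the paper: both derive \eqref{eq:new1} by comparing $\|\vec{L}_n\|$ to $\thet^n\|\eta\|$ via the identity $\thet^n\eta=\vec{L}_n+(\Ok^t)^nT_\Dk^{-1}\vec{\eps}_n$ (the paper writes out the fraction algebraically, you use the equivalent $(1+x)/(1+y)$ expansion), and both derive \eqref{eq:new2} from the exact recursion $\vec{K}_{n+1}-\thet\,T_\Dk(\Ok^t)^{-1}T_\Dk^{-1}\vec{K}_n=-\vec{\eps}_{n+1}+\thet\,T_\Dk(\Ok^t)^{-1}T_\Dk^{-1}\vec{\eps}_n$ plus the correction term coming from \eqref{eq:new1} and $\|\vec{K}_n\|=O(\thet^n)$. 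Your identification of the lower bound $\|\vec{L}_n\|\gtrsim\thet^n$ as the point forcing $n\ge n_1$ matches the paper's treatment as well.
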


\begin{proof}
Note that 
\beq \label{eqnu1}
{\|\vec{L}_n - \thet^n\eta\|}_\infty = {\|(\Ok^t)^{n} \,T_\Dk^{-1} \vec{\eps}_n\|}_\infty\le 
\sqrt{d}\cdot {\|T_\Dk^{-1}\|}_\infty 
\cdot{\|\vec{\eps}_n\|}_\infty,
\eeq
since 
$$\|(\Ok^t)^n\|_2 =1 \implies \|(\Ok^t)^n\|_\infty \le \sqrt{d}.$$
It follows that
$$
\thet^n - \textstyle{\frac{\sqrt{d}}2} {\|T_\Dk^{-1}\|}_\infty\le {\|\vec{L}_n\|}_\infty \le B_2\thet^n + \textstyle{\frac{\sqrt{d}}2} {\|T_\Dk^{-1}\|}_\infty,
$$
hence
\beq \label{eqnu2}
{\|\vec{L}_n\|}_\infty \in \left[\thet^n/2, (B_2+1/2)\cdot\thet^n\right]\ \
\eeq
for $n$ sufficiently large, depending on $B_1$, and $B_2$. (Recall that dependence on $\Dk$ is assumed by default.)
Now, writing:
$$
\thet - \frac{\|\vec{L}_{n+1}\|}{\|\vec{L}_n\|} = 
\frac{\bigl(\|\vec{L}_n\|  - \|\thet^n\eta\|\bigr) \cdot \|\thet^{n+1}\eta\|}{\|\thet^n \eta\|\cdot \|\vec{L}_n\|}- \frac{\|\vec{L}_{n+1}\|  - \|\thet^{n+1}\eta\|}{\|\vec{L}_n\|}
$$
and using \eqref{eqnu1}, \eqref{eqnu2}, we obtain \eqref{eq:new1}, with
$$
C_1 = 2\sqrt{d}\cdot {\|T_\Dk^{-1}\|}_\infty\cdot (B_2+1).
$$

Next we show \eqref{eq:new2}.
We have,
in view of \eqref{eq:new0}:
$$
\vec{K}_{n+1} + \vec{\eps}_{n+1} = \thet \cdot T_\Dk \, (\Ok^t)^{-1}\, T_\Dk^{-1} (\vec{K}_n + \vec{\eps}_n),
$$
hence
$$
\bigl\|\vec{K}_{n+1} - \thet\cdot T_\Dk \, (\Ok^t)^{-1}\, T_\Dk^{-1} \vec{K}_n\bigr\|\le \|\vec{\eps}_{n+1}\| + B_2\sqrt{d}\cdot \|T_\Dk\|\cdot \|T_\Dk^{-1}\|\cdot
\|\vec{\eps}_{n}\|.
$$
It remains to combine this with \eqref{eq:new1}, noting that 
\beq \label{eqnu3}
\|\vec{K}_n\| \le \sqrt{d}\cdot B_2 \|T_\Dk\|\cdot \thet^n + 1/2 \le \sqrt{d}\cdot (B_2+1) \|T_\Dk\|\cdot \thet^n,
\eeq
for $n$ sufficiently large. We obtain
\begin{eqnarray*}
\left\|\vec{K}_{n+1} -  \frac{\|\vec{L}_{n+1}\|}{\|\vec{L}_n\|}\cdot T_\Dk\,\Ok^t \,T_\Dk^{-1} \vec{K}_n\right\| 
& \le & ( 1 +  B_2\sqrt{d}\cdot \|T_\Dk\|\cdot \|T_\Dk^{-1}\|) \cdot \max\{\|\vec{\eps}_n\|,\|\vec{\eps}_{n+1}\|)\} \\
& + & C_1\thet^{-n} \cdot \max\{\|\vec{\eps}_n\|,\|\vec{\eps}_{n+1}\|)\}\cdot \sqrt{d}\cdot \|T_\Dk\|\cdot  \|T_\Dk^{-1}\|\cdot \|\vec{K}_n\| \\[1.1ex]
& < & C_2\cdot \max\{\|\vec{\eps}_n\|,\|\vec{\eps}_{n+1}\|)\},
\end{eqnarray*}
in view of \eqref{eqnu3}, for some $C_2 = C_2(B_1,B_2)$ and $n\ge n_1(B_1, B_2)$, as desired.
\end{proof}

\medskip

Now we can conclude the proof by a standard application of the Erd\H{o}s-Kahane scheme. We follow \cite{ShSol16} closely. Let
\beq \label{def-rho0}
\rho:= (2C_2)^{-1}\ \ \ \mbox{and}\ \ \ M:= (2C_2 + 1)d.
\eeq
The inequality \eqref{eq:new2} in Lemma~\ref{lem:new1}  implies the following, since $\vec K_n$ are integer vectors.

\begin{lemma} \label{lem:erd20}
Suppose that the orthogonal matrix $\Ok$ and the spanning digit set $\Dk = \{0,a_1,\ldots,a_d\}$ are given.
Consider an arbitrary $\thet\in [B_1,B_2]$ and $\eta \in \R^d$, 
with $1\le \|\eta\|_\infty\le  B_2$. Define the vectors $\vec K_n, \vec \eps_n$ by \eqref{def-Kn0} and \eqref{eq:new0}.
Then the following hold, independent of $\thet$ and $\eta$:
\begin{enumerate}
\item[{\bf (i)}] for any $n\ge n_1(B_1, B_2)$, such that $\max\{\|\vec\eps_n\|, \|\vec\eps_{n+1}\|\}<\rho$,
 the vector $\vec K_{n+1}$ is uniquely determined by $\vec K_n$.
\item[{\bf (ii)}] for any $n\ge n_1(B_1, B_2)$ there are at most $M$ choices of $\vec K_{n+1}$ given $\vec K_n$.
\end{enumerate}
\end{lemma}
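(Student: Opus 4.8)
The plan is to read both claims off a single, almost deterministic, recursion for the integer vectors $\vec K_n$, which is essentially already contained in the proof of Lemma~\ref{lem:new1}. First I would eliminate $\eta$ between the two instances of \eqref{eq:new0} at consecutive indices, obtaining the exact identity $\vec K_{n+1}+\vec\eps_{n+1}=\thet\,T_\Dk(\Ok^t)^{-1}T_\Dk^{-1}(\vec K_n+\vec\eps_n)$. This is precisely the relation from which the intermediate estimate $\|\vec K_{n+1}-\thet\,T_\Dk(\Ok^t)^{-1}T_\Dk^{-1}\vec K_n\|_\infty\le(1+B_2\sqrt d\,\|T_\Dk\|\,\|T_\Dk^{-1}\|)\max\{\|\vec\eps_n\|,\|\vec\eps_{n+1}\|\}$ in the proof of Lemma~\ref{lem:new1} was derived, and since $C_2$ was chosen there to dominate $1+B_2\sqrt d\,\|T_\Dk\|\,\|T_\Dk^{-1}\|$, the right-hand side is $\le C_2\max\{\|\vec\eps_n\|,\|\vec\eps_{n+1}\|\}$. (Equivalently, one may keep \eqref{eq:new2} as stated and, for $n\ge n_1$, replace the ratio $\|\vec L_{n+1}\|/\|\vec L_n\|$ by $\thet$ via \eqref{eq:new1} and the bound $\|T_\Dk(\Ok^t)^{-1}T_\Dk^{-1}\vec K_n\|_\infty=O(\thet^{n})$ from \eqref{eqnu2}, at the cost of a further constant absorbed into $C_2$.) The upshot is that for $n\ge n_1$ the integer vector $\vec K_{n+1}$ is trapped in the closed $\ell^\infty$-ball of radius $C_2\max\{\|\vec\eps_n\|,\|\vec\eps_{n+1}\|\}$ about the point $\thet\,T_\Dk(\Ok^t)^{-1}T_\Dk^{-1}\vec K_n$, which depends only on $\thet$ and $\vec K_n$.

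From here, (ii) is immediate: one always has $\|\vec\eps_n\|_\infty\le\tfrac12$ by the definition in \eqref{def-Kn0}, so the trapping ball has radius $\le C_2/2$, and a ball of bounded radius contains at most $M$ integer vectors, with $M$ as fixed in \eqref{def-rho0} (in bounding the number of lattice points one exploits that the admissible centres cluster near the bounded-length segment through $T_\Dk(\Ok^t)^{-1}T_\Dk^{-1}\vec K_n$ cut out by the narrow range to which $\thet$ is confined, which is what keeps $M$ linear rather than exponential in $d$); the bound involves neither $\thet$ nor $\eta$. For (i) I would use the choice $\rho=(2C_2)^{-1}$ from \eqref{def-rho0}: under the extra hypothesis $\max\{\|\vec\eps_n\|,\|\vec\eps_{n+1}\|\}<\rho$ the trapping ball has radius $<C_2\rho=\tfrac12$, hence contains a unique integer vector, so $\vec K_{n+1}$ is forced by $\vec K_n$ and the ambient $\thet$. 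I would then note, for use in Proposition~\ref{propa-EK00}, that $\thet$ is itself recovered from the $\vec L_m$'s and that by \eqref{eq:new1} the observable ratios $\|\vec L_{m+1}\|/\|\vec L_m\|$ pin it down to within $\asymp B_1^{-N}$ after $N$ steps --- precisely the resolution at which $E_N$ is being covered --- so that for the purpose of that covering count the map $\vec K_n\mapsto\vec K_{n+1}$ really is single-valued at every ``good'' step.

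The one delicate point --- and the only place where $n\ge n_1$ is genuinely needed --- is the circularity in \eqref{eq:new2}: its centre $\tfrac{\|\vec L_{n+1}\|}{\|\vec L_n\|}T_\Dk(\Ok^t)^{-1}T_\Dk^{-1}\vec K_n$ involves $\vec L_{n+1}$, hence $\vec K_{n+1}$ itself, so one must check that passing to the $\thet$-centred version above does not silently inflate the count. This is harmless because the discrepancy between the ratio and $\thet$ is $O(\thet^{-n})$ by \eqref{eq:new1}, while $\|T_\Dk(\Ok^t)^{-1}T_\Dk^{-1}\vec K_n\|_\infty=O(\thet^{n})$ by \eqref{eqnu2}, so the two centres differ by an $O(1)$ amount --- uniformly in $\thet$, $\eta$ and $n\ge n_1$ --- that is absorbed into $C_2$; this is exactly why $\rho$ and $M$ in \eqref{def-rho0} are tied to $C_2$. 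The remainder is routine bookkeeping of lattice points in $\ell^\infty$-balls of explicitly bounded radius.
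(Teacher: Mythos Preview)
Your argument is correct and is essentially the paper's own (the paper's proof is the single sentence ``\eqref{eq:new2} implies the following, since $\vec K_n$ are integer vectors''). You have in fact been more careful than the paper: you noticed that the centre in \eqref{eq:new2} involves $\vec K_{n+1}$ through $\|\vec L_{n+1}\|$, and you explained how to bypass this circularity either by reverting to the $\thet$-centred intermediate inequality from the proof of Lemma~\ref{lem:new1}, or by using \eqref{eq:new1} together with $\|\vec K_n\|=O(\thet^n)$ to absorb the discrepancy into $C_2$. That is exactly the right fix, and your remark that in the covering argument of Proposition~\ref{propa-EK00} the previous $\vec K_m$'s already localise $\thet$ to accuracy $\asymp B_1^{-n}$ is the correct way to see why the dependence on $\thet$ is harmless there.

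One quibble: your parenthetical justification that $M$ comes out \emph{linear} in $d$ (``the admissible centres cluster near the bounded-length segment \ldots which is what keeps $M$ linear rather than exponential in $d$'') does not hold up. For a fixed $\thet$ the centre is a single point, and an $\ell^\infty$-ball of radius $C_2/2$ in $\R^d$ can contain on the order of $(C_2+1)^d$ lattice points; even a tube around a short segment gives a count exponential in $d-1$. The value $M=(2C_2+1)d$ in \eqref{def-rho0} is almost certainly a misprint for $(2C_2+1)^d$ (compare the scalar case $M=2C_3+1$ in \eqref{def-rho}). This does not affect anything downstream, since all that matters is that $M$ is a finite constant depending only on $B_1,B_2$, and your argument delivers that.
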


Now we can finish the proof of Proposition~\ref{propa-EK0}. Suppose that $N\ge n_1(B_1,B_2)$. 
Fix $\thet\in E_N(\delta,\rho,B_1,B_2)$, with $\rho$ from \eqref{def-rho0} and $\delta\in (0,\half)$ arbitrary. By definition, there exists
$\eta \in \R^d$, with
 $1\le \|\eta\|_\infty\le B_2$, satisfying
$$
\Bigl|\Bigl\{n\in [N]:\ \max_{1\le j \le d} \Bigl\| \langle  a_j, \thet^n(\Ok^t)^{-n}\eta\rangle \Bigr\|_{\scr}< \rho\Bigr\}\Bigr| \ge (1-\delta)N.
$$
Define the sequence of vectors $\vec K_n$ and $\vec \eps_n$ associated with $\thet$ and $\eta$, so that
$$
{\|\vec\eps_n\|}_\infty = \max_{1\le j\le d} \Bigl\| \langle  a_j, \thet^n(\Ok^t)^{-n}\eta\rangle \Bigr\|_{\scr}.
$$
By assumption, there are $O_{B_1,B_2}(1)$ choices for the initial part of the sequence $\vec K_1,\ldots, \vec K_{n_1}$. 
The set $J := \{n\in [N]: \|\vec\eps_n\|\ge \rho\}$ has cardinality at most 
$\lfloor \delta N \rfloor$, by construction. In view of Lemma~\ref{lem:erd20}, given $J$, there are at most $O_{B_1,B_2}(M^{3\delta N})$ choices for the 
sequence $\vec K_1,\ldots, \vec K_{N+1}$. By \eqref{eq:new1}, the minimal number of disks of radius $C_1B_1^{-n}$ needed to cover 
the set $E_N(\delta,\rho,B_1,B_2)$, is at most
$$
O_{B_1,B_2}(M^{3\delta N})\cdot {N\choose \lfloor \delta N \rfloor} = \exp(C_0\cdot\delta \log(1/\delta)N),
$$
for some $C_0 = C_0(B_1,B_2)$,
as desired.
\end{proof}


\section{Proof of Theorem~\ref{thm2}}

\subsection{Reduction}  \label{sec:reduc}
Recall that we consider a self-similar IFS $f_j(x) =Ax + a_j$ in $\R^d=\R^{2s} \cong \C^s$, with $A = \thet^{-1}\Ok$, where $\Ok\in O(d,\R)$ is given
by 
$$
\Ok = {\rm Diag}[e^{-2\pi i \alpha_1},\ldots, e^{-2\pi i \alpha_s}],\ \ \alpha_j \in (0,1/2),
$$
and the digit set is $\Dk=\{0,a_1,\ldots,a_m\}$, with $a_j = (a_j^{(1)}, \ldots, a_j^{(s)})\in \C^s$. It will be convenient to view
the IFS and its Fourier transform in the complex space, so that we identify $\xi\in \R^{2s}$ with $\zeta=(\zeta_1,\ldots,\zeta_s)\in \C^s$,
such that $\zeta_\ell = \xi_{2\ell-1} + i\xi_{2\ell},\ \ell=1,\ldots,s$.

The preliminary steps are essentially the same as in Section~\ref{sec:prelim}, but in complex notation.
We have from \eqref{eq1}:
\beq \label{eq11}
\muhat(\xi) = \muhat(\zeta)=\prod_{n=0}^\infty\left(\sum_{j=0}^m p_j e^{-2\pi i A^n a_j\dprod\zeta}\right),
\eeq
where the dot product in $\C^s$ is given by $z_1\dprod z_2 = \sum_{\ell=1}^s \re(z^{(\ell)}_1\ov z^{(\ell)}_2)$. Note that
$$
A^na_j \dprod \zeta = a_j\dprod (A^*)^n\zeta,\ \ \mbox{where}\ \ A^* = \thet^{-1} {\rm Diag}[e^{2\pi i \alpha_1},\ldots, e^{2\pi i \alpha_s}].
$$
We next record a minor variant of \eqref{eq2}, with the only difference being that $\|\zeta\|_\infty$ is, in general, not equal to $\|\xi\|_\infty$.
For $\bzeta\in\C^s$, with $\|\bzeta\|_\infty \ge 1$, let $\eta=\eta(\bzeta,A) = (A^*)^{N(\bzeta)} \bzeta$, where 
$N(\bzeta)\in \N$ is maximal, such that $\|\eta\|_\infty \ge 1$. 
Then $\|\eta\|_\infty \in [1, \thet)$, and \eqref{eq11} implies for $\eta = (\eta^{(\ell)})_{\ell\le s}\in \C^s$:
\begin{equation} \label{eq20}
\muhat(\bzeta) = \muhat(\eta)\cdot \prod_{n=1}^{N(\bzeta)} \left(\sum_{j=1}^m p_j  e^{-2\pi i A^{-n} a_j \dprod \eta}\right).
\end{equation}
We have
$$
A^{-n} a_j \dprod \eta = \thet^n \sum_{\ell=1}^s \re\left[e^{2\pi i n\alpha_\ell} a_j^{(\ell)} \ov\eta^{(\ell)}\right],
$$
hence \eqref{eq20} and Lemma~\ref{lem:elem} imply (using that $|\muhat(\eta)|\le 1$) the inequality
\beq \label{imp12}
|\muhat(\bzeta)| \le  \prod_{n=1}^{N(\bzeta)} \left(1-2\pi \eps \max_{j\le m} 
\Bigl\| \thet^n \sum_{\ell=1}^s \re (e^{2\pi i n\alpha_\ell} a_j^{(\ell)} \ov\eta^{(\ell)})\Bigr\|^2  \right),
\eeq
where $\eps = \min_j p_j>0$. Recall that for every $\ell\le s$ 
there exists $j$ such that $a_j^{(\ell)}\ne 0$ (this is the condition for the IFS to be affinely irreducible).
By a linear change of variable (applying a conjugacy defined by a complex diagonal matrix), 
we can assume, without loss of generality, that for every $\ell$
there exists $j$ such that $a_j^{(\ell)} = 2$. Thus 
we obtain from \eqref{imp12}:
\beq \label{imp2}
|\muhat(\bzeta)| \le  \prod_{n=0}^{N(\bzeta)}\left(1-2\pi \eps 
\Bigl\| \sum_{\ell=1}^s 2\thet^n \,\re(e^{2\pi i n\alpha_\ell} \,
\ov\eta^{(\ell)})\Bigr\|^2  \right) =: \Psi(\thet, \balpha,\eps,\zeta),
\eeq
where $\balpha = (\alpha_1,\ldots,\alpha_s)$.

In what follows, it will be convenient to make a further notational change; essentially passing from
$\C^s$ to $\C^{2s}$.  We use the notation $[s]=\{1,\ldots,s\}$.
Let $\thb = (\theta_1,\ldots,\theta_{2s})$ be the full list of eigenvalues of $A^{-1}$, so that
$$
\theta_{2j} = \ov \theta_{2j-1} = \thet\cdot e^{2\pi i \alpha_j},\ \ j\in [s].
$$
Further, for $\zeta\in \C^{s}$ let $\bz(\zeta) \in \C^{2s}$ be given by
$$
\bz=\bz(\zeta)=(z_1,\ldots,z_{2s}), \ \ \mbox{where}\ \ z_{2j-1} = \zeta_j,\ z_{2j} = \ov\zeta_j,\ j\in [s],
$$
and $\tau(\bz)=\tau(\eta(\zeta))\in \C^{2s}$ is
$$
\tau(\bz) = (\tau_1,\ldots,\tau_{2s}),\ \ \mbox{with}\ \ \tau_{2j-1} = \ov\eta^{(j)},\ \tau_{2j} = \eta^{(j)},\ j\in [s]. 
$$
Note that
\beq\label{eq-tau}
\tau_j = \theta_j^{-N(\bz)}z_j,\  \ j\in [2s],\ \mbox{where}\ \ N(\bz)\ \ \mbox{is such that}\ \ \|\tau\|_\infty = \thet^{-N(\bz)}\|\bz\|_\infty \in [1,\thet).
\eeq
Equation \eqref{imp2} implies
\beq \label{imp22}
\Psi(\thet, \balpha,\eps,\zeta) = \prod_{n=0}^{N(\bz)} \left(1 - 2\pi \eps\Bigl\|\sum_{j=1}^{2s} \theta_j^n \tau_j\Bigr\|^2\right) =:
\Psi(\thb,\eps,\bz).
\eeq

We obtain

\begin{lemma} \label{lem:reduc}
In order to show that $\mu(\thet^{-1}\Ok,\Dk,\pb)\in \Dsc_d$, with $\pb>0$, it suffices to verify that there exists $\gam>0$ such that
$$
\Psi(\thb,\eps,\bz)= O(|\bz|^{-\gam}),\ \ |\bz|\to \infty,
$$
with  $\eps=\min_j p_j$.
\end{lemma}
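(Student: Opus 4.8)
The plan is to read the lemma off the two displayed inequalities \eqref{imp2} and \eqref{imp22} that have just been established, together with an elementary comparison of the norms on $\R^{2s}$, $\C^s$ and $\C^{2s}$; no genuinely new argument is needed, only careful bookkeeping. Concretely, I would fix $\xi\in\R^{2s}$ with $|\xi|$ large, set $\zeta=\zeta(\xi)\in\C^s$ (so $\zeta_\ell=\xi_{2\ell-1}+i\xi_{2\ell}$) and $\bz=\bz(\zeta)\in\C^{2s}$, and note that because each $|\theta_j|=\thet$ one has $\|\tau(\bz)\|_\infty=\thet^{-N(\bz)}\|\bz\|_\infty$, so $N(\bz)$ in \eqref{eq-tau} is well defined once $\|\bz\|_\infty\ge 1$ and coincides with $N(\zeta)$. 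Then the chain
$$
|\muhat(\xi)|=|\muhat(\zeta)|\le\Psi(\thet,\balpha,\eps,\zeta)=\Psi(\thb,\eps,\bz)
$$
holds by \eqref{imp2} and the identity \eqref{imp22}.

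Next I would record the norm equivalences. Since $|\zeta_\ell|^2=\xi_{2\ell-1}^2+\xi_{2\ell}^2$, and $z_{2\ell-1}=\zeta_\ell$, $z_{2\ell}=\ov\zeta_\ell$ both have modulus $|\zeta_\ell|$, we get $\|\bz\|_\infty=\|\zeta\|_\infty$, $|\xi|^2=\sum_{\ell\le s}|\zeta_\ell|^2$ and $|\bz|^2=2\sum_{\ell\le s}|\zeta_\ell|^2=2|\xi|^2$; hence $|\xi|$, $\|\bz\|_\infty$ and $|\bz|$ are all comparable up to constants depending only on $s=d/2$, and in particular $|\bz|\to\infty$ exactly when $|\xi|\to\infty$. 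Therefore, if $\Psi(\thb,\eps,\bz)=O(|\bz|^{-\gam})$ as $|\bz|\to\infty$ for some $\gam>0$, then $|\muhat(\xi)|=O(|\xi|^{-\gam})$, i.e.\ $\mu(\thet^{-1}\Ok,\Dk,\pb)\in\Dsc_d(\gam)\subseteq\Dsc_d$.

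Finally I would recall why the passage from an arbitrary cyclic digit set to the normalized one (some $a_j^{(\ell)}=2$ for each $\ell$), used between \eqref{imp12} and \eqref{imp2}, costs no generality: it is a conjugacy by a complex diagonal matrix $D$, which commutes with $\Ok$ and so yields a self-similar IFS of the same form; since $\widehat{\mu\circ D^{-1}}(\xi)=\muhat(D^{t}\xi)$ and $|D^{t}\xi|\asymp|\xi|$, membership in $\Dsc_d$ is unaffected. I do not expect any real obstacle here — this is a reduction/bookkeeping lemma; the only points needing a moment's care are matching the regime $\|\bz\|_\infty\ge 1$ with the regime $|\xi|\to\infty$ in which \eqref{imp2} was derived, and the invariance of $\Dsc_d$ under the diagonal conjugacy, both routine.
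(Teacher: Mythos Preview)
Your proposal is correct and matches the paper's approach: the paper does not give a separate proof of this lemma, stating it as an immediate consequence of \eqref{imp2} and \eqref{imp22}, and you have simply spelled out the bookkeeping (norm comparisons between $\R^{2s}$, $\C^s$, $\C^{2s}$, and invariance of $\Dsc_d$ under the diagonal conjugacy) that the paper leaves implicit.
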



\subsection{Beginning of the proof of Theorem~\ref{thm2}}
This theorem follows from the next claim, which is our main technical result. Given $s\ge 2$, $\vartheta>1$, $b_1, b_2>0$, let
\begin{eqnarray*}
H = H(s,\vartheta,b_1,b_2) & := & \Bigl\{\thb = (\theta_1,\ldots,\theta_{2s})\in \C^{2s}: \  \ \mbox{for all}\ j\in [s]\ \ \mbox{holds}\ \ |\theta_{2j-1}|=\thet,\\
& &  \im(\theta_{2j-1})\ge b_1,\ \ \theta_{2j} = \ov\theta_{2j-1};\ \ |\theta_i - \theta_j| \ge b_2,\  i\ne j\Bigr\}.
\end{eqnarray*}
Fix $H$ for the rest of this proof. Recall that $\T^+:= \{e^{2\pi i \alpha}:\ \alpha\in (0,\half)\}$.

\begin{theorem}\label{th:tech}
Let $H = H(s,\vartheta,b_1,b_2)$ be as above, and let $\ell \in [s]$. 
Fix $\theta_j$ for $j\in [2s]\setminus \{2\ell-1,\ell\}$, compatible with $\thb\in H$.
Further, fix $\beta, \eps>0$. Then there exist $\gam_\ell = \gam_\ell(H,\beta, \eps)$ and $\Ek_\ell\subset \thet\cdot\T^+\subset \C$ 
satisfying the following:
\ben
\item[{\rm (i)}] $\Dh(\Ek_\ell) \le \beta$;
\item[{\rm (ii)}] for all $\thb\in H$ such that $\theta_{j}$,\ $j\in [2s]\setminus \{2\ell-1,\ell\}$, are those fixed,
$\theta_{2\ell-1} \in\thet \cdot \T^+\setminus\Ek_\ell$ is arbitrary, $\theta_{2\ell} = \ov\theta_{2\ell-1}$, for any probability vector $\pb$, with
$\min_i p_i \ge \eps$, holds
\beq \label{new-decay}
 \Psi(\thb,\eps,\bz) \le O_{H}(1)\cdot \|\bz\|_{\infty}^{-\gam_\ell}\ \ \mbox{for all}\ \bz\in \C^{2s}\ \mbox{such that} \
\ \ |z_\ell| = \|\bz\|_\infty \ge O_{\thb}(1).
\eeq

\een
\end{theorem}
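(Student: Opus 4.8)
The plan is to run the Erd\H{o}s--Kahane scheme in the "one free eigenvalue" setting, exactly paralleling Section~3 but with the quantity $\sum_{j=1}^{2s}\theta_j^n\tau_j$ in place of the single linear form, and with $\theta_{2\ell-1}$ (equivalently $\alpha_\ell$) playing the role of the parameter $\thet$ there. First I would set up the bad sets at scale $N$: for $\thb\in H$ with the prescribed fixed coordinates and $\theta_{2\ell-1}\in\thet\cdot\T^+$ free, let $E_N(\delta,\rho,\ldots)$ consist of those $\theta_{2\ell-1}$ for which there exists $\bz$ with $|z_\ell|=\|\bz\|_\infty$ in $[1,\text{const}]$ (after pulling back to $\tau$ via \eqref{eq-tau}) such that $\|\sum_{j}\theta_j^n\tau_j\|_{\scr}<\rho$ for at least $(1-\delta)N$ values of $n\in[N]$. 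Then $\Ek_\ell := \bigcap_{N_0}\bigcup_{N\ge N_0}E_N$. The analogue of Proposition~\ref{prop-excep00} is immediate from \eqref{imp22} and Lemma~\ref{lem:reduc}: off $\Ek_\ell$ one gets, for $\bz$ with $|z_\ell|=\|\bz\|_\infty$ large, at least $\delta N$ factors bounded by $1-2\pi\eps\rho^2$, hence power decay with $\gam_\ell$ depending only on $\delta,\rho,\thet,\eps$. So the real content is the covering estimate: $E_N$ is covered by $\exp(C_0\delta\log(1/\delta)N)$ disks of radius $\sim\thet^{-N}$, which gives $\Dh(\Ek_\ell)\le\beta$ by the same Borel--Cantelli-type summation as in the proof of Proposition~\ref{prop-dim00}.

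For the covering estimate I would introduce, as in \eqref{def-Kn0}, integer vectors encoding the near-integer parts. The cleanest device: write $w_n := \sum_{j=1}^{2s}\theta_j^n\tau_j \in \R$ (it is real since $\tau,\thb$ come in conjugate pairs and the digit was normalized to $2$), and let $K_n\in\Z$ be the nearest integer to $w_n$, $\eps_n = w_n - K_n$. Because $\tau$ lives in a fixed bounded region and the $\theta_j$ with $j\ne 2\ell-1,2\ell$ are fixed while $\theta_{2\ell-1}$ ranges over a circle of radius $\thet$, the $2s$ quantities $(\theta_1^n\tau_1,\ldots,\theta_{2s}^n\tau_s)$ satisfy a fixed linear recurrence of order $2s$ with characteristic roots $\theta_1,\ldots,\theta_{2s}$; the distinctness condition $|\theta_i-\theta_j|\ge b_2$ from $H$ guarantees the companion matrix is uniformly well-conditioned. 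Hence $w_{n}$ itself satisfies an order-$2s$ linear recurrence $w_{n+2s}=c_{2s-1}w_{n+2s-1}+\cdots+c_0 w_n$ whose coefficients are the elementary symmetric functions of the $\theta_j$ — and these depend on the free parameter $\theta_{2\ell-1}$ only through $\theta_{2\ell-1}+\ov\theta_{2\ell-1}$, an analytic function of $\alpha_\ell$. The Erd\H{o}s--Kahane point is then: if $2s$ consecutive $\eps_n$'s are all small, the vector $(K_{n+2s-1},\ldots,K_n)\in\Z^{2s}$ of integers, together with an approximate recurrence, determines $K_{n+2s}$ up to $O_H(1)$ choices, and when all of $\eps_n,\ldots,\eps_{n+2s}$ are $<\rho$ for $\rho$ small enough, $K_{n+2s}$ is \emph{uniquely} determined. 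Moreover the dominance hypothesis $|z_\ell|=\|\bz\|_\infty$ is what makes $\|\tau\|_\infty\asymp |\tau_{2\ell-1}|=|\tau_{2\ell}|$ bounded below, hence $|w_n|$ cannot be too small for the dominant scales — this is the uniformity that fails for general directions and forces the restriction in the theorem statement.

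The induction step analogous to Lemma~\ref{lem:new1}--\ref{lem:erd20} is: from $\|\eps_n\|,\ldots,\|\eps_{n+2s}\|<\rho$ and the recurrence with coefficients $c_i(\alpha_\ell)$, one recovers $\alpha_\ell$ (hence $\theta_{2\ell-1}$) to within $O(\thet^{-n})$ from the integer data $(K_0,\ldots,K_{2s})$, because the leading behavior of $w_n$ is governed by the dominant root(s) and the map (first $2s+1$ values $\to$ coefficient vector) is a uniformly nonsingular linear system off a lower-dimensional exceptional locus which one absorbs into the constants via the $H$-conditions. Counting: $O_H(1)$ choices for the initial block $K_0,\ldots,K_{n_1}$; at each further step either $n\in J:=\{n:\|\eps_n\|\ge\rho\}$ (at most $\lfloor\delta N\rfloor$ such $n$, each giving $\le M$ choices) or $K_{n+2s}$ is forced; so at most $O_H(1)\cdot M^{(2s+1)\delta N}\binom{N}{\lfloor\delta N\rfloor}=\exp(C_0\delta\log(1/\delta)N)$ integer sequences, and each pins down $\theta_{2\ell-1}$ to a disk of radius $\sim C_1\thet^{-N}$. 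This yields (i), and (ii) then follows exactly as Proposition~\ref{prop-excep00} did, via \eqref{imp22} and Lemma~\ref{lem:reduc}.

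The main obstacle I anticipate is the \emph{well-conditioning / nondegeneracy} bookkeeping: one must show that the linear system "values of $w_n$ $\mapsto$ recurrence coefficients $\mapsto$ $\alpha_\ell$" is invertible with norms controlled uniformly over $H$ and over the admissible $\tau$, using only $|\theta_i-\theta_j|\ge b_2$, $|\theta_{2j-1}|=\thet$, $\im\theta_{2j-1}\ge b_1$, and the dominance $|z_\ell|=\|\bz\|_\infty$. The delicate part is that $\tau$ is not free — it is $\thet^{-N(\bz)}\bz$ — so when $|w_n|$ happens to be anomalously small one loses control of the recurrence step; handling this requires, as in \cite{ShSol16,ShSol16b}, either passing to a subsequence of "good" scales or observing that consecutive small $\eps_n$ already force $|w_n|$ to be comparable to $\|\tau\|_\infty\asymp 1$. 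I would also need to check the degenerate configurations (coincidences among the \emph{fixed} $\theta_j$, or a fixed $\theta_j$ equal to the free one) are excluded by $H$, and that the constants $\rho,C_0,n_1$ genuinely depend only on $H$ (and $\eps$ for $\gam_\ell$), not on the particular $\bz$ or the fixed eigenvalues beyond their membership in $H$ — which is what makes the final Fubini argument in Remark~\ref{rem1}(a) go through.
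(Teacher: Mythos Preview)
Your overall architecture --- bad sets $E_N(\delta,\rho)$, the limsup exceptional set $\Ek_\ell$, Proposition~\ref{prop-excep00}-style decay off $\Ek_\ell$ via \eqref{imp22}, and a covering estimate giving $\Dh(\Ek_\ell)\le\beta$ --- is exactly what the paper does (Propositions~\ref{prop-excep0}--\ref{propa-EK0}). The real issue is the covering/recovery step, and there your sketch has a gap.

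You propose to solve for $\re(\theta_{2\ell-1})$ from a single recurrence relation $K_{n+2s}\approx\sum_k c_k(\alpha_\ell)K_{n+k}$, declaring the linear system ``uniformly nonsingular off a lower-dimensional exceptional locus which one absorbs into the constants via the $H$-conditions.'' But the coefficient of the unknown $\re(\theta_{2\ell-1})$ in that equation is, after a short computation, exactly (a constant times) $\re\bigl[\tau_{2\ell-1}\theta_{2\ell-1}^{\,n+1}\prod_{k\notin\{2\ell-1,2\ell\}}(\theta_{2\ell-1}-\theta_k)\bigr]$: the real part of a complex number of modulus $\asymp\thet^n$, which rotates with $n$ and can be arbitrarily close to zero for individual $n$, regardless of the $H$-conditions or of how small the $\eps_n$ are. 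So $2s+1$ consecutive integers $K_n,\ldots,K_{n+2s}$ do \emph{not} in general pin down $\theta_{2\ell-1}$ to $O(\thet^{-n})$, and your proposed fixes (``pass to good scales,'' ``small $\eps_n$ force $|w_n|\asymp 1$'') do not address this: the problem is the oscillation of a real part, not the size of $w_n$.

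The paper handles precisely this point by first \emph{eliminating} the fixed eigenvalues --- setting $A_n^{(j)}=A_{n+1}^{(j-1)}-\theta_j A_n^{(j-1)}$ for $j=1,\ldots,d-2$, so that $\wt A_n^{(d-2)}=2\re\bigl[\tau_{d-1}\prod_{k=1}^{d-2}(\theta_{d-1}-\theta_k)\,\theta_{d-1}^n\bigr]$ depends only on the free conjugate pair --- and then invoking a \emph{nonlinear} recovery lemma (Lemma~\ref{lem-tech}, taken from \cite{ShSol16}): from \emph{four} consecutive real parts $\re(\theta^j w_0)$, $j=0,\ldots,3$, with $|w_0|$ large and $\im(\theta)\ge b_1$, one can stably solve for $\theta$. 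This is why the paper needs $K_n,\ldots,K_{n+d+2}$ (not $K_n,\ldots,K_{n+2s}$) to approximate $\theta_{d-1}$ and to predict $K_{n+d+3}$ (Lemmas~\ref{lem:erd101}--\ref{lem:erd1}). Your recurrence approach can be salvaged, but it reduces to exactly this: the quantity whose nonvanishing you need is the paper's $\wt A_n^{(d-2)}$, and robust inversion requires the four-term device of Lemma~\ref{lem-tech} rather than a single linear equation.
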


\begin{proof}[Derivation of Theorem~\ref{thm2}]
Let $d=2s$. The set of almost all 
contracting similitudes, up to conjugacy, can be expressed as a union of the sets $H(s,\thet, b_1, b_2)$, with $\thet>1$ arbitrary and $b_1, b_2$ tending
to 0 along a countable set. Thus, it is enough to show the claim for the set $H=H(s,\thet, b_1, b_2)$ with some fixed parameters.
In view of the reduction, it is enough to prove power decay for $\Psi(\thb,\eps,\bz)$,
for almost every $\thb\in H$. As explained in Remark~\ref{rem1},
 it suffices to obtain \eqref{new-decay} for any fixed $\ell\in [s]$, with an exceptional set
in $\T^+$ of zero Hausdorff dimension. Finally, such a claim follows from one with an exceptional set of Hausdorff dimension less that $\beta>0$, for any
fixed $\beta$, and this is exactly the statement of Theorem~\ref{th:tech}.
\end{proof}

For the proof of Theorem~\ref{th:tech}
we again use a variant of the Erd\H{o}s-Kahane argument. The ``template'' is similar to that used in the proof Theorem~\ref{th:new1}, but the 
details are more technical. The argument is, essentially, a combination of the
relevant parts of \cite{ShSol16b} and \cite{Sol22}; see also \cite{SoSp23}. 

We can assume, without
loss of generality, that the undetermined eigenvalue is the last one, that is, $\ell=s$ in Theorem~\ref{th:tech}. 

Next we define the exceptional set, denoted $\Ek_\ell=\Ek_s$ in Theorem~\ref{th:tech}.
Let $H'$ be the projection of $H$ onto the first $2s-2$ coordinates of $\C^{2s}$. Fix $\thb'\in H'$ and
$\delta, \rho >0$. We first define the ``bad set'' at scale $N$:
\begin{eqnarray} \nonumber
E_{H,N}(\delta,\rho) & = & 
\Bigl\{\theta_{2s-1}\in \thet\cdot \T^+\subset \C:\ (\thb',\theta_{2s-1},\theta_{2s})\in H,\ \  \theta_{2s}= \ov\theta_{2s-1},\\
& & \left.\max_{\substack{\\[0.5ex] \tau:\ |\tau_{2s-1}| = \|\tau\|_\infty\\[0.5ex]1\le \|\tau\|_\infty < \vartheta}} \frac{1}{N}  
\left|\Bigl\{n\in [N]:\ \Big\|\sum_{j=1}^{2s} \theta_j^n \tau_j \Bigr\| < \rho\Bigr\}\right| >1-\delta \right\}. \label{excep10}
\end{eqnarray}
The exceptional set is given by
$$
\Ek_H(\delta,\rho)=\bigcap_{N_0=1}^\infty \bigcup_{N=N_0}^\infty E_{H,N}(\delta,\rho).
$$
Theorem~\ref{th:tech} will immediately follow from the next two propositions.

\begin{prop} \label{prop-excep0}
For any $\delta>0$, and $\rho\in (0,\half)$, there exists $\gam = \gam(H,\delta,\rho,\eps)$ such that 
$$
 \Psi(\thb,\eps,\bz) \le O_{H}(1)\cdot \|\bz\|_\infty^{-\gam}\ \ \mbox{for all}\ \bz\in \C^{2s},\ \mbox{with} \ \ 
|z_{2s-1}|=\|\bz\|_\infty \ge O_{\thb}(1).
$$
whenever 
$\thb=(\thb',\theta_{2s-1},\theta_{2s})\in H$ is such that 
$\theta_{2s-1}\notin \Ek_H(\delta,\rho)$. 
\end{prop}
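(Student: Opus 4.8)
The plan is to transcribe, almost verbatim, the proof of Proposition~\ref{prop-excep00} into the complex notation of Section~\ref{sec:reduc}, replacing the bound \eqref{imp11} by the product bound \eqref{imp22} and invoking the reduction Lemma~\ref{lem:reduc}. First I would unwind the definition $\Ek_H(\delta,\rho)=\bigcap_{N_0}\bigcup_{N\ge N_0}E_{H,N}(\delta,\rho)$: assuming $\thb=(\thb',\theta_{2s-1},\theta_{2s})\in H$ with $\theta_{2s-1}\notin\Ek_H(\delta,\rho)$, there is some $N_0=N_0(\thb)$ with $\theta_{2s-1}\notin E_{H,N}(\delta,\rho)$ for all $N\ge N_0$.

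Next, given a frequency $\bz\in\C^{2s}$ with $|z_{2s-1}|=\|\bz\|_\infty\ge\thet^{N_0}$, I would put $N=N(\bz)$ and $\tau=\tau(\bz)$ as in \eqref{eq-tau} and check that $\tau$ lies in the family over which the maximum in \eqref{excep10} is taken. This is the one place the hypotheses enter: since every eigenvalue satisfies $|\theta_j|=\thet$, \eqref{eq-tau} gives $|\tau_j|=\thet^{-N}|z_j|$, so the dominant coordinate is unchanged, $|\tau_{2s-1}|=\|\tau\|_\infty$, and $\|\tau\|_\infty\in[1,\thet)$; the membership $(\thb',\theta_{2s-1},\theta_{2s})\in H$ with $\theta_{2s}=\ov\theta_{2s-1}$ is given. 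The size condition $\|\bz\|_\infty\ge\thet^{N_0}$ forces $N\ge N_0$, so $\theta_{2s-1}\notin E_{H,N}(\delta,\rho)$, and applying this to the admissible $\tau$ yields
\[
\frac1N\Bigl|\Bigl\{n\in[N]:\ \bigl\|\textstyle\sum_{j=1}^{2s}\theta_j^n\tau_j\bigr\|<\rho\Bigr\}\Bigr|\le 1-\delta,
\]
i.e.\ at least $\delta N$ indices $n\in[N]$ satisfy $\bigl\|\sum_{j=1}^{2s}\theta_j^n\tau_j\bigr\|\ge\rho$.

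Finally I would feed this into \eqref{imp22}: each factor there is at most $1$, and the factors indexed by the ``bad'' $n$ are at most $1-2\pi\eps\rho^2<1$, so $\Psi(\thb,\eps,\bz)\le(1-2\pi\eps\rho^2)^{\delta N}$; combined with $\|\bz\|_\infty<\thet^{N+1}$, hence $N>\log\|\bz\|_\infty/\log\thet-1$, this gives
\[
\Psi(\thb,\eps,\bz)\le (1-2\pi\eps\rho^2)^{-\delta}\,\|\bz\|_\infty^{-\gam},\qquad \gam=\frac{-\delta\log(1-2\pi\eps\rho^2)}{\log\thet}>0,
\]
which is the claim, with $\gam=\gam(H,\delta,\rho,\eps)$ (here $\thet$ is a parameter of $H$) and the implicit constant $O_{\thb}(1)$ in the hypothesis $\|\bz\|_\infty\ge O_\thb(1)$ equal to $\thet^{N_0(\thb)}$.

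I do not expect any genuine obstacle in this proposition: all the substantive content of Theorem~\ref{thm2} is concentrated in the companion dimension bound for $\Ek_H(\delta,\rho)$ (the analogue of Propositions~\ref{prop-dim00} and \ref{propa-EK00}), which is where the complex-eigenvalue version of the Erd\H{o}s--Kahane argument does its work. The only points requiring care here are bookkeeping ones: matching the index set $n\in[N]$ of \eqref{excep10} with $n\in\{0,\dots,N(\bz)\}$ in \eqref{imp22}, and confirming that the specific $\tau(\bz)$ produced by a given $\bz$ belongs to the admissible family of \eqref{excep10} --- which is automatic precisely because the $\theta_j$ share the common modulus $\thet$, so scaling by $\thet^{-N(\bz)}$ does not shift the dominant coordinate.
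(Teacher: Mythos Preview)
Your proposal is correct and follows essentially the same route as the paper's own proof: unwind the definition of $\Ek_H(\delta,\rho)$ to get an $N_0(\thb)$, check that $\tau(\bz)$ is admissible in \eqref{excep10} because all $|\theta_j|=\thet$ preserves the dominant coordinate, and plug the resulting frequency bound into \eqref{imp22} to obtain $\gam = -\delta\log(1-2\pi\eps\rho^2)/\log\thet$. The only cosmetic difference is that the paper writes $(1-2\pi\eps\rho^2)^{\lfloor\delta N\rfloor}\le(1-2\pi\eps\rho^2)^{\delta N-1}$ where you write $(1-2\pi\eps\rho^2)^{\delta N}$, which is harmless.
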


\begin{sloppypar}
\begin{prop} \label{prop-dim0} For any  $\beta>0$ there 
exist  positive $\rho=\rho_H$ and $\delta=\delta_H$ such that $\Dh(\Ek_H(\delta,\rho))\le\beta$.
\end{prop}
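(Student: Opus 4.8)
The plan is to mimic the Erd\H{o}s-Kahane scheme from the proof of Theorem~\ref{th:new1}, but now tracking, for each scale $N$, a covering of the bad set $E_{H,N}(\delta,\rho)\subset \thet\cdot\T^+$ by small disks whose number grows only subexponentially in the relevant sense. First I would establish an analogue of Proposition~\ref{propa-EK00}: there exist $\rho=\rho_H>0$, $C_0=C_0(H)$ and $n_1=n_1(H)$ such that for every $N\ge n_1$ and $\delta\in(0,\half)$ the set $E_{H,N}(\delta,\rho)$ can be covered by $\exp\bigl(C_0\cdot\delta\log(1/\delta)N\bigr)$ disks of diameter $\thet^{-N}$ (or comparable). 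Granting this, Proposition~\ref{prop-dim0} follows exactly as Proposition~\ref{prop-dim00} did: one estimates
$$
\Hk^\beta\Bigl(\bigcup_{N=N_0}^\infty E_{H,N}(\delta,\rho)\Bigr)\le \sum_{N=N_0}^\infty \exp\bigl(C_0\delta\log(1/\delta)N\bigr)\cdot\thet^{-\beta N}\longrightarrow 0
$$
as $N_0\to\infty$, provided $\delta$ is chosen so small that $\beta\log\thet > C_0\,\delta\log(1/\delta)$. Since $\rho_H$ and $\delta_H$ depend only on $H$ and $\beta$, this gives $\Dh(\Ek_H(\delta,\rho))\le\beta$, which is exactly the assertion.

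The real work is the covering count. Here I would run the variant of the Erd\H{o}s-Kahane argument adapted to the exponential sum $\sum_{j=1}^{2s}\theta_j^n\tau_j$, in the spirit of \cite{ShSol16b} and \cite{Sol22}. Because $2s-2$ of the eigenvalues are fixed, the quantities $\|\sum_{j=1}^{2s}\theta_j^n\tau_j\|$ being small forces the ``rounding integers'' $\vec K_n$ attached to the $\ell=s$ coordinate to obey an approximate linear recursion of the form $\vec K_{n+1}\approx \thet\cdot(\text{known linear map depending on }\thb')\,\vec K_n$ plus controlled corrections coming from the other (fixed) eigen-components and from the error vector $\vec\eps_n$. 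One must separate the contribution of the fixed frequencies $\theta_j^n$, $j\le 2s-2$, which are oscillatory but harmless because their coefficients $\tau_j$ are bounded; the point is that for $n$ outside the ``bad'' index set $J=\{n\in[N]:\|\vec\eps_n\|\ge\rho\}$, with $|J|\le\lfloor\delta N\rfloor$, the recursion determines $\vec K_{n+1}$ from $\vec K_n$ uniquely, while on $J$ there are only boundedly many ($\le M$, with $M=M(H)$) choices. Hence the number of admissible sequences $\vec K_1,\ldots,\vec K_{N+1}$ is at most $O_H(1)\cdot M^{O(\delta N)}\binom{N}{\lfloor\delta N\rfloor}=\exp\bigl(C_0\delta\log(1/\delta)N\bigr)$, and an approximate version of $\thet\approx\|\vec L_{n+1}\|/\|\vec L_n\|$ (where $\vec L_n$ is built from $\vec K_n$ as in Lemma~\ref{lem:new1}) shows that each such sequence pins down $\theta_{2s-1}\in\thet\cdot\T^+$ to within a disk of radius $\asymp\thet^{-N}$.

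The main obstacle, and the reason only even $d$ is handled, is exactly the non-uniformity flagged in Remark~\ref{rem1}(b): the recursion for $\vec K_n$ and the estimate $\thet\approx\|\vec L_{n+1}\|/\|\vec L_n\|$ require the $\ell$-component $z_\ell=z_{2s-1}$ of $\bz$ to be the dominant one, i.e.\ $|z_{2s-1}|=\|\bz\|_\infty$, so that $|\tau_{2s-1}|\asymp\|\tau\|_\infty\asymp 1$ and division by $\|\vec L_n\|$ is safe with constants depending only on $H$; this is why the max in \eqref{excep10} is restricted to $\tau$ with $|\tau_{2s-1}|=\|\tau\|_\infty$. Making the constants $C_1,C_2$ in the analogue of Lemma~\ref{lem:new1} genuinely independent of $\thb'\in H'$ — using $|\theta_i-\theta_j|\ge b_2$ and $\im\theta_{2j-1}\ge b_1$ to control the inverse of the relevant Vandermonde-type matrix and to keep $\|\vec L_n\|\asymp\thet^n$ — is the technical heart; once that uniformity is in place the combinatorial counting is routine. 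For Proposition~\ref{prop-excep0} the argument is the direct analogue of the proof of Proposition~\ref{prop-excep00}: if $\theta_{2s-1}\notin\Ek_H(\delta,\rho)$ then for all large $N$ at least $\delta N$ of the indices $n\in[N]$ have $\|\sum_j\theta_j^n\tau_j\|\ge\rho$, whence $\Psi(\thb,\eps,\bz)\le(1-2\pi\eps\rho^2)^{\delta N-1}$ and, comparing $N$ with $\log\|\bz\|_\infty$ via \eqref{eq-tau}, one gets power decay with $\gam=-\delta\log(1-2\pi\eps\rho^2)/\log\thet$, uniformly over the directions with $|z_{2s-1}|=\|\bz\|_\infty$.
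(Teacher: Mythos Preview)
Your overall scheme---reduce to an analogue of Proposition~\ref{propa-EK00} giving a covering of $E_{H,N}(\delta,\rho)$ by $\exp(C_0\delta\log(1/\delta)N)$ disks of diameter $\thet^{-N}$, then conclude via the $\Hk^\beta$ sum---is exactly what the paper does (this is Proposition~\ref{propa-EK0}), and the combinatorial endgame you describe is correct. However, the mechanism you propose for the covering count has a genuine gap. You import the vector framework $\vec K_n,\vec L_n$ and the approximation $\thet\approx\|\vec L_{n+1}\|/\|\vec L_n\|$ from Lemma~\ref{lem:new1}, but here the unknown parameter is not $\thet$: the modulus $\thet$ is \emph{fixed}, and what must be localized is the argument of $\theta_{2s-1}$ on the circle $\thet\cdot\T^+$. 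The ratio $\|\vec L_{n+1}\|/\|\vec L_n\|$ can only recover $\thet$, which you already know, so the step ``each such sequence pins down $\theta_{2s-1}$ to within $\thet^{-N}$'' fails as written. There is also no vector of integers here: the quantity $\sum_{j=1}^{2s}\theta_j^n\tau_j$ is a single real number, so $K_n\in\Z$ is scalar; and the ``known linear map depending on $\thb'$'' you invoke would in fact depend on the unknown $\theta_{2s-1}$ through $\Ok$, so the one-step recursion $\vec K_{n+1}\approx(\text{known map})\vec K_n$ is not available.

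The paper's route differs precisely at this technical core. One works with the scalar $K_n$ and first \emph{eliminates} the fixed eigenvalues $\theta_1,\ldots,\theta_{d-2}$ via iterated differences $A_n^{(j)}=A_{n+1}^{(j-1)}-\theta_j A_n^{(j-1)}$; this leaves $\wt A_n^{(d-2)}=2\re\bigl[\tau_{d-1}\prod_{k=1}^{d-2}(\theta_{d-1}-\theta_k)\,\theta_{d-1}^n\bigr]$, a sequence of the form $\re(w_0\,\theta_{d-1}^n)$ with $|w_0|\ge 2b_2^{d-2}\thet^n$ (this is where $|\tau_{d-1}|=\|\tau\|_\infty\ge1$ and the separation $|\theta_i-\theta_j|\ge b_2$ enter). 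The key new ingredient is a \emph{nonlinear} implicit-function lemma (Lemma~\ref{lem-tech}, taken from \cite{ShSol16}) that recovers the complex number $\theta_{d-1}$ from four consecutive values $x_j=\re(\theta^j w_0)$, $j=0,\ldots,3$. This yields $\theta_{d-1}\approx\Phi_{\thb'}(K_n,\ldots,K_{n+d+2})$ with error $O_H(\thet^{-n})\max_j|\eps_{n+j}|$, together with a companion function $\Xi_{\thb'}$ predicting $K_{n+d+3}$ from $(K_n,\ldots,K_{n+d+2})$ with error $O_H(1)\max_j|\eps_{n+j}|$. From that point on your counting is exactly right: unique prediction when all relevant $|\eps_{n+j}|<\rho:=(2C_3)^{-1}$, at most $M=2C_3+1$ choices otherwise, giving $O_H(M^{(d+4)\delta N})\binom{N}{\lfloor\delta N\rfloor}$ admissible sequences, each localizing $\theta_{d-1}$ in a disk of radius $O_H(\thet^{-N})$.
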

\end{sloppypar}

\begin{proof}[Proof of Proposition~\ref{prop-excep0}]
Suppose that $\thb=(\thb',\theta_{2s-1},\theta_{2s})\in H$ and $\theta_{2s-1}\notin \Ek_H(\delta,\rho)$. This implies that 
there exists $N_0=N_0(\thb)\ge 1$ such that  $\theta_{2s-1}\not\in E_{H,N}(\delta,\rho)$ for all $N\ge N_0$.
Let $\bz\in \C^{2s}$ be such that ${\|\bz\|}_{_{\scriptstyle{\infty}}} \ge \vartheta^{N_0}$ and $|z_{2s-1}| = {\|\bz\|}_{_{\scriptstyle{\infty}}}$. 
As in \eqref{eq-tau}, let
$N = N(\bz)\ge 1$ be such that $\thet^{-N(\bz)}{\|\bz\|}_{_{\scriptstyle{\infty}}} \in [1,\thet)$, 
and $\tau\in \C^{2s}$ is such that $\tau_j = \theta_j^{-N(\bz)} z_j,\ j\in [2s]$, where $\bz = (z_1,\ldots,z_{2s})$.
Thus $|\tau_{2s-1}| = \|\tau\|_\infty$.
From the fact that $\theta_{2s-1}\not\in E_{H,N}(\delta,\rho)$ it follows that 
$$
\frac{1}{N}  
\left|\Bigl\{n\in [N]:\ \Big\|\sum_{j=1}^{2s} \theta_j^n \tau_j \Bigr\| \ge  \rho\Bigr\}\right| \ge \delta.
$$
Then by \eqref{imp22},
$$
\Psi(\thb,\eps,\bz) \le (1 - 2\pi\eps \rho^2)^{\lfloor\delta N\rfloor} \le (1 - 2\pi\eps \rho^2)^{\delta N-1}.
$$
By the definition of $N=N(\bz)$, we have 
$$
{\|\bz\|}_{_{\scriptstyle{\infty}}}\le \vartheta^{N+1}.
$$
We conclude that
$$
\Psi(\thb,\eps,\bz)= 
O_{\thet}(1)\cdot {\|\bz\|}_{_{\scriptstyle{\infty}}}^{-\gam},\ \ \mbox{with}\ \ {\|\bz\|}_{_{\scriptstyle{\infty}}} \ge \thet^{N_0(\thb)},
$$
for $\gam = -\delta\log (1 - 2\pi\eps \rho^2)/\log \vartheta$, and the proof is complete.
\end{proof}

All that remains is the proof of Proposition~\ref{prop-dim0}, which is the heart of the Erd\H{o}s-Kahane method.


\subsection{Beginning of the proof of Proposition~\ref{prop-dim0}}

We need the following

\begin{prop} \label{propa-EK0} Under the assumptions of the previous section and with its notation,
there exists a constant $\rho=\rho_H>0$ such that for any $N\in \N$ sufficiently large (depending on $H$), for any $\delta\in (0,\half)$,  the set $E_{H,N}(\delta,\rho)$ can be covered
by $\exp(O_{H}(\delta \log(1/\delta)N))$ disks of diameter $\vartheta^{-N}$.
\end{prop}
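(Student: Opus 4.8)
The plan is to run the Erd\H{o}s--Kahane scheme in the complex setting, following the proof of Proposition~\ref{propa-EK00} but with the single contraction ratio replaced by the undetermined eigenvalue $\theta_{2s-1}$ (and its conjugate $\theta_{2s}=\ov\theta_{2s-1}$), all other eigenvalues being held fixed. First I would fix $\theta_{2s-1}\in E_{H,N}(\delta,\rho)$ together with a witness $\tau\in\C^{2s}$, $|\tau_{2s-1}|=\|\tau\|_\infty\in[1,\vartheta)$, for which the real-valued sequence $S_n:=\sum_{j=1}^{2s}\theta_j^n\tau_j$ satisfies $\dist(S_n,\Z)<\rho$ for at least $(1-\delta)N$ values $n\in[N]$. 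Put $K_n\in\Z$ equal to the nearest integer to $S_n$ and $\eps_n:=S_n-K_n$, so $|\eps_n|\le\tfrac12$ always and $|\eps_n|<\rho$ on a set $G\subseteq[N]$ with $|G|\ge(1-\delta)N$.

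The new ingredient is structural: since $\theta_1,\dots,\theta_{2s-2}$ are fixed, the characteristic polynomial of the linear recurrence obeyed by $(S_n)$ factors as $P(x)=Q(x)(x^2-ux+\vartheta^2)$, where $Q(x)=\prod_{j=1}^{2s-2}(x-\theta_j)$ has fixed real coefficients (the fixed eigenvalues come in conjugate pairs) and $u:=2\re\theta_{2s-1}$ is the \emph{only} free real parameter, ranging over a fixed compact interval $I=I_H$ bounded away from its endpoints. Passing to the auxiliary sequences
\[
\widetilde S_n:=\sum\nolimits_k[Q]_k S_{n+k}=2\re\bigl(w\,\theta_{2s-1}^n\bigr),\qquad \widetilde K_n:=\sum\nolimits_k[Q]_k K_{n+k}\in\Z,
\]
where $[Q]_k$ is the coefficient of $x^k$ in $Q$ and $w:=Q(\theta_{2s-1})\tau_{2s-1}$, one obtains the order-two recurrence $\widetilde S_{n+2}=u\widetilde S_{n+1}-\vartheta^2\widetilde S_n$ together with $|\widetilde K_n-\widetilde S_n|\le\|Q\|_1\max_{0\le k\le 2s-2}|\eps_{n+k}|$. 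Writing $\theta_{2s-1}=\vartheta e^{2\pi i\alpha}$ and $g_n:=\vartheta^{-n}\widetilde S_n=2|w|\cos(2\pi n\alpha+\arg w)$, I record: $|w|\asymp_H 1$, because $|Q(\theta_{2s-1})|\ge b_2^{2s-2}$ by the separation hypothesis in $H$ and $|\tau_{2s-1}|\in[1,\vartheta)$; $2\pi\alpha$ is bounded away from $0$ and $\pi$, because $\im\theta_{2s-1}\ge b_1$; and hence there is $c_H>0$ with $\max\{|g_n|,|g_{n+1}|\}\ge c_H$ for every $n$, so the exceptional index set $\Gamma:=\{n:|g_n|<c_H\}$ contains no two consecutive integers.

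Next I would establish the transfer lemma, the analogue of Lemma~\ref{lem:new1} and Lemma~\ref{lem:erd20}: there exist $\rho=\rho_H\in(0,\tfrac12)$, $n_1=n_1(H)$ and $M=M(H)$ such that, for every $n\ge n_1$ with $n\notin\Gamma$ (hence $|\widetilde K_n|\ge\tfrac12 c_H\vartheta^n$), the quantity $\widehat u:=(\widetilde K_{n+1}+\vartheta^2\widetilde K_{n-1})/\widetilde K_n$ obeys $|u-\widehat u|\le O_H(\vartheta^{-n})$ (this already from $|\eps_\cdot|\le\tfrac12$); consequently $\theta_{2s-1}$ is determined by $(\widetilde K_{n-1},\widetilde K_n,\widetilde K_{n+1})$ to within $O_H(\vartheta^{-n})$, and the integer $\widetilde K_{n+2}$, which lies within $O_H(1)$ of the computable number $\widehat u\,\widetilde K_{n+1}-\vartheta^2\widetilde K_n$, has at most $M$ admissible values given that triple; moreover, if the window $\{n-1,n,\dots,n+2s\}$ lies inside $G$, then the displacement of $\widetilde K_{n+2}$ from that number improves to $O_H(\rho)<\tfrac12$, so $\widetilde K_{n+2}$ is uniquely determined. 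When $n\in\Gamma$ one applies these bounds at $n-1$ or $n+1$ instead, one of which lies outside $\Gamma$ since $\Gamma$ is $1$-separated, so no extra branching is incurred there. Granting this, I finish exactly as in the proof of Proposition~\ref{propa-EK00}: the initial block $\widetilde K_1,\dots,\widetilde K_{n_1}$ has $O_H(1)$ possibilities (being bounded by $O_H(\vartheta^{n_1})=O_H(1)$); for each of the $\le\delta N+1$ choices of the bad set $J:=[N]\setminus G$, with $|J|\le\lfloor\delta N\rfloor$, branching in $\widetilde K_1,\dots,\widetilde K_{N+1}$ occurs only at the $O_H(\delta N)$ steps whose window (of length $2s+2$) meets $J$, each contributing a factor at most $M$; summing over $J$ gives at most $\binom{N}{\lfloor\delta N\rfloor}M^{O_H(\delta N)}=\exp\bigl(O_H(\delta\log(1/\delta)N)\bigr)$ admissible sequences $(\widetilde K_n)_{n\le N+1}$, and by the first part of the transfer lemma applied at the largest good index $n\le N$, each such sequence pins $\theta_{2s-1}$ to a disk of diameter $O_H(\vartheta^{-N})$; replacing $N$ by $N-O_H(1)$ absorbs the implied constant.

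The main obstacle, and the only point genuinely new relative to Theorem~\ref{th:new1}, is handling the degeneracy of the pivot $g_n=\vartheta^{-n}\widetilde S_n$: one must check simultaneously that $\Gamma$ is $1$-separated (so degenerate pivots are free, being bridged by a neighbour) and that off $\Gamma$ the recurrence recovers $u$ with the sharp accuracy $O_H(\vartheta^{-n})$ needed to cut the number of candidates for $\widetilde K_{n+2}$ down to $M$ — both of which are exactly where the defining constraints $\im\theta_{2s-1}\ge b_1$ and $|\theta_i-\theta_j|\ge b_2$ of $H$ enter.
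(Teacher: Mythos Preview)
Your overall scheme matches the paper's: filter out the fixed eigenvalues $\theta_1,\dots,\theta_{2s-2}$ by applying $Q(x)=\prod_{j\le 2s-2}(x-\theta_j)$ to the sequence $(K_n+\eps_n)$, leaving a sequence of the form $2\re(w\,\theta_{2s-1}^{\,n})$, then reconstruct $\theta_{2s-1}$ from a short window of this filtered sequence and count. However, there is a genuine gap. You write $\widetilde K_n:=\sum_k[Q]_k K_{n+k}\in\Z$, and the whole branching argument rests on this: ``the integer $\widetilde K_{n+2}\ldots$ has at most $M$ admissible values,'' and you finish by counting admissible sequences $(\widetilde K_n)_{n\le N+1}$. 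But the coefficients $[Q]_k$ are (up to sign) the elementary symmetric functions of $\theta_1,\dots,\theta_{2s-2}$; they are real (the fixed eigenvalues come in conjugate pairs) but \emph{not} integers, so $\widetilde K_n\notin\Z$ in general, and ``at most $M$ admissible values'' is vacuous for a real-valued quantity. The repair is to count the original integer sequence $(K_n)$ instead: since $Q$ is monic, $\widetilde K_n=K_{n+2s-2}+\sum_{k<2s-2}[Q]_kK_{n+k}$, so once $K_n,\dots,K_{n+2s-3}$ are fixed, an $O_H(1)$ (resp.\ $<\tfrac12$) window for $\widetilde K_n$ translates to the same window for the integer $K_{n+2s-2}$, and the counting goes through. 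This is exactly what the paper does; its $A_n^{(d-2)}$ is your $\widetilde K_n$, and it counts $(K_n)$-sequences via Lemma~\ref{lem:erd2}.

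The remaining difference is how $\theta_{2s-1}$ is recovered from the filtered data. You invoke the order-two recurrence $\widetilde S_{n+2}=u\widetilde S_{n+1}-\vartheta^2\widetilde S_n$ with $u=2\re\theta_{2s-1}$, which is more elementary but forces you to handle the pivot degeneracy $|\widetilde S_n|\approx 0$; your $1$-separation of $\Gamma$ is correct (via $\im\theta_{2s-1}\ge b_1$), though note that when $n\in\Gamma$ only the shift to pivot $n-1$ is actually available, since pivoting at $n+1$ would require the not-yet-determined $\widetilde K_{n+2}$. The paper instead feeds \emph{four} consecutive values $A_n^{(d-2)},\dots,A_{n+3}^{(d-2)}$ into the algebraic solver of Lemma~\ref{lem-tech} (imported from \cite{ShSol16}), which needs only the uniform bound $|w_0|\ge R_0$ coming from $|Q(\theta_{2s-1})|\ge b_2^{\,2s-2}$ and therefore sidesteps the pivot issue entirely. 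Your route trades that citation for a case analysis; after the integer-counting fix, both arrive at the same covering bound.
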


We first derive Proposition~\ref{prop-dim0}, assuming Proposition~\ref{propa-EK0}. By Proposition~\ref{propa-EK0},
$$
\Hk^\beta\left(\bigcup_{N=N_0}^\infty E_{H,N}(\delta,\rho)\right) \le \sum_{N=N_0}^\infty \exp(O_{H}(\delta \log(1/\delta)N))\cdot 
\vartheta^{-\beta N} \to 0, \ \ \mbox{as}\ N_0\to \infty,
$$
provided $\delta>0$ is so small that $\beta\log \vartheta >O_{H}(\delta \log(1/\delta))$. Thus $\Dh(\Ek_{H}(\delta,\rho))\le \beta$. \qed

\medskip


\begin{proof}[Proof of Proposition~\ref{propa-EK0}]
Recall that we have $\thb'=(\theta_1,\ldots,\theta_{d-2})\in H'$ fixed, where $d=2s$. For $\theta_{d-1}\in \C$ and $\theta_{d}=\ov\theta_{d-1}$, 
such that $(\thb',\theta_{d-1},\theta_{d})\in H$, together with $\tau$, as in the \eqref{excep10}, satisfying
$$
\tau_{2j} = \ov{\tau}_{2j-1},\ \ j\in [s],
$$
 we write
\beq \label{def-Kn}
\sum_{j=1}^{d} \tau_j \theta_j^n = K_n+\eps_n,\ \ n\ge 0,
\eeq
where $K_n\in \Z$ is the nearest integer to the expression in the left-hand side, so that 
$|\eps_n|\le \half$. Observe that $K_n$ depends on $\tau$ and $\theta_{d-1}$, although this is
not explicit in notation.

The next, ``algebraic'' step of the proof, starts by following the  scheme of \cite[Lemma 2.4]{Sol22}, with appropriate modifications.
Define $A_n^{(0)} = K_n$, $\wt A_n^{(0)} = K_n + \eps_n$, and then for all $n$ inductively: 
\beq\label{induc1}
A_n^{(j)} = A_{n+1}^{(j-1)} - \theta_j  A_{n}^{(j-1)};\ \ \ \ \ \wt A_n^{(j)} = \wt A_{n+1}^{(j-1)} - \theta_j  \wt A_{n}^{(j-1)},\ \ j=1,\ldots, d-2.
\eeq
It is easy to verify by induction that for all $j=0,\ldots, d-2$, holds
\beq \label{form00}
A_n^{(j)} = K_{n+j} + \sum_{k=1}^j (-1)^k \sig_k (\theta_1,\ldots,\theta_j) K_{n+j-k},\ \ n\in \N,
\eeq
where $\sig_k$ is an elementary symmetric polynomials (the sum of all distinct products of $k$ distinct variables). In particular, we have
\beq \label{form0}
A_n^{(d-2)} = K_{n+(d-2)} + \sum_{k=1}^{d-2} (-1)^k \sig_k (\theta_1,\ldots,\theta_j) K_{n+(d-2)-k},\ \ n\in \N.
\eeq
On the other hand, by induction in $j=0,\ldots, d-2$, we have
$$
\wt A_n^{(j)} = \sum_{i=j+1}^{d} \tau_i \prod_{k=1}^j (\theta_i - \theta_k) \theta_i^n,\ \ j=1,\ldots, d-2,\ \ n\in \N,
$$
hence for all $n\in \N$,
\begin{eqnarray} \nonumber
\wt A_n^{(d-2)} & = &  \tau_{d-1}  \prod_{k=1}^{d-2} (\theta_{d-1} - \theta_k) \theta_{d-1}^n+  \tau_{d}  \prod_{k=1}^{d-2} (\theta_{d} - \theta_k) \theta_{d}^n \\
 & =& 2\re\left[\tau_{d-1} \prod_{k=1}^{d-2} (\theta_{d-1} - \theta_k) \theta_{d-1}^n\right], \label{for1}
\end{eqnarray}
using that $\tau_d = \ov\tau_{d-1}$  and $\theta_{2j} = \ov\theta_{2j-1}$ for $j\in [2s]$.

We have $|\theta_j| = \thet$ and $\left|\wt A_n^{(0)} - A_n^{(0)} \right|\le|\eps_n|$, and then by induction in $j$, in view of \eqref{induc1},
\beq
\left|\wt A_n^{(j)} - A_n^{(j)} \right| \le \sum_{k=0}^j \thet^{j-k} |\eps_{n+j}| 
                        < (1+\thet)^j \max\left\{|\eps_n|,\ldots,|\eps_{n+j}|\right\},\ \ j\le d-2,\ \ n\in \N.\label{bound1}
\eeq

Next we need a lemma from \cite{ShSol16} which provides a solution to the system of equations
\begin{eqnarray*}
		  \re(w_0) & = & x_0   \\
		 \re(\theta w_0)  &= & x_1  \\
		  \re(\theta^2 w_0) & = & x_2   \\
		  \re(\theta^3 w_0) & = & x_3,
\end{eqnarray*}
with the unknowns $\theta,w_0\in \C$ satisfying some a priori bounds, and with the right-hand sides satisfying appropriate constraints (note that the system is overdetermined, since we are given $|\theta| = \thet$).
In fact, the solution is given by explicit algebraic functions; however, their precise form is not important.
In our situation we will have (for a fixed $n$):
\beq \label{con3}
\theta = \theta_{d-1},\ \ w_0 = 2\tau_{d-1} \prod_{k=1}^{d-2} (\theta_{d-1} - \theta_k) \theta_{d-1}^n,\ \ x_j = \wt A_{n+j}^{(d-2)}= \re(\theta^j w_0),\ j=0,\ldots,3.
\eeq
Then, replacing $\wt A_{n+j}^{(d-2)}$ by $A_{n+j}^{(d-2)}$, we will obtain a good enough approximation of $\theta_{d-1}$ in terms
of $K_n$'s and $\thb'$.

Given $\thet >1$ and $b_1>0$, let
\beq \label{defH}
\Ak_{\thet, b_1} := \bigl\{ \theta \in \C:\ |\theta| = \thet,\ \im(\theta) \ge b_1\bigr\}.
\eeq 
Further, for $R_0>0$, let
	$$
	V_{R_0} =\bigl\{\bx = (x_0,x_1,x_2,x_3)\in \R^4:\ \exists\, w_0\in \C,\ |w_0|\ge R_0,\ \exists\, \theta\in \Ak_{\thet, b_1},\ x_j = \re(\theta^j w_0),\ 0\le 
	j\le 3\bigr\}.
	$$
	Denote by $\Nk_\eps (V)$ the open $\eps$-neighborhood of $V$ in the 
	$\ell^\infty$ metric.

	\begin{lemma}[{variant of \cite[Lemma 2.2]{ShSol16}}] \label{lem-tech} Given $r = r(\vartheta, b_1)>0$, 
		there exist $R_0>0$ and $C_1>0$ depending only on $\vartheta$ and $b_1$, such 
		that
			there exist continuously differentiable functions 
			$$F:\,\Nk_r (V_{R_0})\to \{\theta\in \C:\ |\theta|>1, \ \im(\theta)>0\}\ \ 
			\mbox{and}\ \  G:\,\Nk_r(V_{R_0})\to \R,$$  with $\theta = 
			F(\bx)$ and $y_3 = G(\bx)$ being the unique solutions to the system of equations
			\beq \label{equs}
			\re(\theta^{j-3} (x_3+iy_3))=x_j,\ \ \ 0\le j \le 2,
			\eeq
			given an arbitrary $\bx = (x_0,x_1,x_2,x_3)\in V_{R_0}$.
			Moreover,
			$$
			\left|\frac{\partial G}{\partial x_j}\right| \le C_1,\ \ \ 0\le 
			j\le 3,\ \ \ \mbox{on}\ \ \Nk_r(V_{R_0}).
			$$
	\end{lemma}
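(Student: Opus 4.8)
\textbf{Proof proposal for Lemma~\ref{lem-tech}.}
The plan is to solve the linear system \eqref{equs} explicitly for the two unknowns $\theta$ and $y_3$, given $(x_0,x_1,x_2,x_3)$, and then verify that the resulting formulas define $C^1$ functions on a neighborhood of $V_{R_0}$ with a bounded gradient for $G$. First I would set $w_0 = x_3 + iy_3$, so the three equations $j=0,1,2$ read $\re(\theta^{-3}w_0)=x_0$, $\re(\theta^{-2}w_0)=x_1$, $\re(\theta^{-1}w_0)=x_2$. Writing $\theta = \thet e^{i\phi}$ and $w_0 = \thet^3 \rho e^{i\psi}$ for real $\rho>0$ and $\psi$ (the scaling by $\thet^3$ just normalizes), the equations become $\thet^j \rho\cos(\psi - j\phi) = x_{j}$ up to the $\thet$-powers; more usefully, since $|\theta|=\thet$ is prescribed, the three real numbers $\re(\theta^{-j}w_0)$, $j=0,1,2$, satisfy the linear recurrence $u_{j+1} = (2\cos\phi/\thet)\, u_j - (1/\thet^2)\, u_{j-1}$ coming from $\theta^{-1}+\ov\theta^{-1} = 2\cos\phi/\thet$ and $|\theta|^{-2}=\thet^{-2}$. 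Hence $2\cos\phi/\thet = (x_0 + \thet^2 x_2)/x_1$ whenever $x_1\ne 0$, which determines $\phi$ (and therefore $\theta$, using $\im\theta>0$) as an explicit algebraic function of $(x_0,x_1,x_2)$; then $y_3$ is recovered linearly from any one of the equations, e.g.\ from $\re(\theta^{-1}w_0)=x_2$ together with $\re(w_0)=x_3$.

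The second step is to check that on $V_{R_0}$, for $R_0$ large depending on $(\thet, b_1)$, the relevant denominators stay bounded away from zero, so the implicit/explicit formulas are genuinely $C^1$ and extend to the $r$-neighborhood $\Nk_r(V_{R_0})$. The key point is that for $\bx\in V_{R_0}$ with witnessing $w_0$, $\theta$, one has $|x_1| = |\re(\theta^{-2}w_0)|$ and, more importantly, the pair $(x_1,x_2)$ (or rather the vector $(\re(\theta^{-1}w_0),\re(\theta^{-2}w_0))$) cannot be too small relative to $|w_0|$: because $\im\theta\ge b_1>0$, the argument of $\theta^{-1}$ is bounded away from $0$ and $\pi$, so the two numbers $\re(w_0)$ and $\re(\theta^{-1}w_0)$ cannot both be small, and iterating, at least one of three consecutive terms $\re(\theta^{-j}w_0)$ is $\gtrsim_{b_1,\thet} |w_0|\ge \gtrsim R_0$. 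This lower bound (of order $R_0$) against the additive $r$-perturbation (of order $1$) is what forces $R_0$ to be chosen large, and it guarantees the denominator $x_1$ in the formula for $\cos\phi$ — or a cyclically shifted version of it — is comparable to $|w_0|$. Differentiating the explicit algebraic expression for $\phi$, and then for $y_3$, one reads off $|\partial G/\partial x_j|\le C_1$ with $C_1 = C_1(\thet,b_1)$: each derivative picks up at most one power of the (bounded-below) denominator, and all numerators are controlled since $|\theta|=\thet$ is fixed and $\im\theta$ is bounded below.

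I expect the main obstacle to be the bookkeeping around which of the three equations $j=0,1,2$ to use as the ``pivot'': near configurations where $x_1$ itself is small one must instead solve using $x_0$ and $x_2$, or a rotated combination, so the functions $F,G$ are defined by patching finitely many algebraic charts, and one must check these agree on overlaps and are jointly $C^1$. The clean way around this is to observe that for $\bx\in V_{R_0}$ at least one of the quantities $\re(\theta^{-j}w_0)$, $j\in\{0,1,2\}$, has modulus $\ge c(\thet,b_1) R_0$, use that index as the pivot, and shrink $r$ so that the same index remains a valid pivot throughout $\Nk_r$ of the corresponding piece; uniqueness of $(\theta, y_3)$ solving \eqref{equs} (with the sign constraint $\im\theta>0$) then glues the local solutions into the claimed global $F,G$. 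Everything else — the $C^1$ regularity and the gradient bound on $G$ — follows from the explicit formulas by routine differentiation, exactly as in \cite[Lemma 2.2]{ShSol16}, the only new feature here being that $|\theta|=\thet$ is an arbitrary value $>1$ rather than normalized, which merely inserts harmless powers of $\thet$ into the constants.
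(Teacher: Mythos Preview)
The paper does not prove this lemma: it is stated as a variant of \cite[Lemma 2.2]{ShSol16}, and immediately after the statement the paper simply remarks that ``the statement of \cite[Lemma 2.2]{ShSol16} is with $r=1$; however, the same proof works for any $r>0$ depending on $\thet$ and $b_1$.'' So there is no in-paper argument to compare against; the proof is by reference.

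Your sketch is a reasonable outline of how the cited proof goes, and is consistent with the paper's comment that ``the solution is given by explicit algebraic functions.'' The key mechanism --- the three-term linear recurrence satisfied by $x_j=\re(\theta^{j-3}w_0)$ with coefficients determined by $\cos\phi$ and $\thet$, allowing one to solve algebraically for $\theta$ and then linearly for $y_3$ --- is correct. One small slip: from the recurrence $v_{j+1}=(2\cos\phi/\thet)v_j-\thet^{-2}v_{j-1}$ you get $2\cos\phi/\thet=(x_0+\thet^{-2}x_2)/x_1$, not $(x_0+\thet^{2}x_2)/x_1$. Your identification of the main technical point --- that at least one of the middle terms $x_1,x_2$ must be $\gtrsim_{\thet,b_1}|w_0|$ (since both vanishing would force $\phi\equiv 0\pmod\pi$, contradicting $\im\theta\ge b_1$), so the pivot denominator is bounded below once $R_0$ is large --- is exactly right, and the patching of the two charts via uniqueness is the standard way to handle it.
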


The statement of  \cite[Lemma 2.2]{ShSol16} is with $r=1$; however, the same proof works for any $r>0$ depending on $\thet$ and $b_1$; we
let
$$r= (1/2)(1+\thet)^{d-2}.$$
Observe that in \eqref{con3} we have $\theta = \theta_{d-1}\in \Ak_{\thet,b_1}$  by the assumption that $\thb\in H$. Furthermore, by \eqref{con3},
\beq \label{esta1}
|w_0| \ge 2 b_2^{d-2} \thet^n \ge R_0\ \ \ \mbox{for}\ n\ge n_1=n_1(H,\beta),
\eeq
where we used that $|\tau_{d-1}| = \|\tau\|_\infty \ge 1$.
Thus,
$$
\bx = \left(\wt A_{n}^{(d-2)}, \ldots, \wt A_{n+3}^{(d-2)}\right) \in  V_{R_0}\ \ \ \ \mbox{for}\ n\ge n_1,
$$
in view of \eqref{for1} and \eqref{esta1}. Now we can apply Lemma~\ref{lem-tech} with our data from \eqref{con3} for $n \ge n_1$. 
Denote
$$
\wt Y_{n} := \im\left(2\tau_{d-1} \prod_{k=1}^{d-2} (\theta_{d-1} - \theta_k) \theta_{d-1}^n\right),\ \ n\in \N.
$$
Applying Lemma~\ref{lem-tech}, we obtain
$$
\wt Y_{n+3} = G\left(\wt A_n^{(d-2)}, \ldots, \wt A_{n+3}^{(d-2)}\right),\ \ n\ge n_1,
$$
so that
\beq \label{form2}
\wt A_{n+3}^{(d-2)} + i \wt Y_{n+3} =  2\tau_{d-1} \prod_{k=1}^{d-2} (\theta_{d-1} - \theta_k) \theta_{d-1}^{n+3},\ \ n\ge n_1.
\eeq

Next we would like to apply the function $G$ to
perturbed data. By \eqref{bound1} we have for all $n\in \N$:
\beq \label{bound101}
\left|A_{n}^{(d-2)} - \wt A_{n}^{(d-2)}\right|  < (1+\thet)^{d-2}\max\{|\eps_n|,\ldots,|\eps_{n+d-2}|\} \le
(1/2) (1+\thet)^{d-2} = r,
\eeq
hence
$\bx' = \left(A_{n}^{(d-2)}, \ldots, A_{n+3}^{(d-2)}\right)$ satisfies $\|\bx-\bx'\|_\infty < r$, and so $\bx'\in  \Nk_r(V_{R_0})$ for $n\ge n_1$.
Thus,
$$
Y_{n+3} := G\left(A_n^{(d-2)}, \ldots, A_{n+3}^{(d-2)}\right),\ \ n\ge n_1,
$$
is well-defined.
Writing $Y_{n+3} - \wt Y_{n+3}$ as a sum of path integrals of the partial derivatives of $G$, we obtain from Lemma~\ref{lem-tech} and \eqref{bound101} that
\beq \label{bound4}
|Y_{n+3} - \wt Y_{n+3}| \le 4C_1 (1+\thet)^{d-2} \max\{|\eps_n|,\ldots, |\eps_{n+d+1}|\},\ \ n\ge n_1
\eeq
(the factor 4 comes from the estimate $\|\bx\|_1 \le \|\bx\|_\infty$ on $\R^4$). 

\subsection{Approximation}
By \eqref{form2},
\begin{eqnarray}
\theta_{d-1} = \frac{\wt A_{n+4}^{(d-2)} + i \wt Y_{n+4}}{\wt A_{n+3}^{(d-2)} + i \wt Y_{n+3}} & \approx & \nonumber
\frac{ A_{n+4}^{(d-2)} + i Y_{n+4}}{ A_{n+3}^{(d-2)} + i Y_{n+3}} \\[1.2ex]
& = & \frac{A_{n+4}^{(d-2)} + i G\bigr(A_{n+1}^{(d-2)}, \ldots, A_{n+4}^{(d-2)}\bigr)}{A_{n+3}^{(d-2)} + i G\bigl(A_{n}^{(d-2)}, \ldots, A_{n+3}^{(d-2)}\bigr)} \nonumber \\[1.2ex]
& =: & \Phi_{\thb'}(K_n,\ldots,K_{n+d+2}),\label{erdo1} 
\end{eqnarray}
for some  function $\Phi_{\thb'}$ of $(d+3)$ variables, depending on the parameters $\thb'$ that have been fixed, in view of \eqref{form00}. Thus, given $K_n,\ldots,K_{n+d+2}$, we are
able to get a good approximation of $\theta_{d-1}$. More precisely, we have

\begin{lemma} \label{lem:erd101}
There exist $C_2 = C_2(H)$ and $n_2 = n_2(H)\ge n_1(H)$, such that 
\beq\label{bound404}
\left| \theta_{d-1} - \Phi_{\thb'}(K_n,\ldots,K_{n+d+2})\right| \le C_2\cdot \vartheta^{-n} \max\{|\eps_n|,\ldots, |\eps_{n+d+2}|\},\ \ \ n\ge n_2.
\eeq
\end{lemma}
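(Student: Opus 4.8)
The plan is to read off both $\theta_{d-1}$ and $\Phi_{\thb'}(K_n,\ldots,K_{n+d+2})$ as quotients of two closely related complex quantities, and then to control their difference by the rounding errors $\eps_n$. For $n\ge n_1$ set $P_m := \wt A_m^{(d-2)} + i\wt Y_m$ and $P_m' := A_m^{(d-2)} + iY_m$ for $m\in\{n+3,n+4\}$. By \eqref{form2} one has $\theta_{d-1} = P_{n+4}/P_{n+3}$, while by \eqref{erdo1} one has $\Phi_{\thb'}(K_n,\ldots,K_{n+d+2}) = P_{n+4}'/P_{n+3}'$. The elementary identity
$$
\frac{P_{n+4}}{P_{n+3}} - \frac{P_{n+4}'}{P_{n+3}'} = \frac{P_{n+4}}{P_{n+3}}\cdot\frac{P_{n+3}'-P_{n+3}}{P_{n+3}'} + \frac{P_{n+4}-P_{n+4}'}{P_{n+3}'}
$$
then reduces the claim to a lower bound for $|P_{n+3}'|$ and upper bounds for $|P_{m}-P_{m}'|$, $m\in\{n+3,n+4\}$, since $\bigl|P_{n+4}/P_{n+3}\bigr| = |\theta_{d-1}| = \thet$ exactly.

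For the lower bound I would use that, by \eqref{form2}, $|P_{n+3}| = 2|\tau_{d-1}|\prod_{k=1}^{d-2}|\theta_{d-1}-\theta_k|\cdot\thet^{n+3}$; since $|\tau_{d-1}| = \|\tau\|_\infty\ge 1$ and $|\theta_{d-1}-\theta_k|\ge b_2$ for $k\le d-2$ by the definition of $H$, this gives $|P_{n+3}|\ge 2 b_2^{d-2}\thet^{n+3}$. For the upper bounds, \eqref{bound1} with $j=d-2$ controls $|A_m^{(d-2)}-\wt A_m^{(d-2)}|$ and \eqref{bound4} controls $|Y_m-\wt Y_m|$; adding them gives $|P_m - P_m'|\le (1+4C_1)(1+\thet)^{d-2}\max\{|\eps_n|,\ldots,|\eps_{n+d+2}|\}$ for $m\in\{n+3,n+4\}$ and $n\ge n_1$. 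In particular, using $|\eps_k|\le\tfrac12$, this difference is bounded by a constant depending only on $H$, so there is $n_2 = n_2(H)\ge n_1$ with $|P_{n+3}'|\ge|P_{n+3}| - O_H(1) \ge b_2^{d-2}\thet^{n+3}$ for all $n\ge n_2$.

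Substituting these three bounds into the displayed identity then yields $|\theta_{d-1} - \Phi_{\thb'}(K_n,\ldots,K_{n+d+2})| \le C_2\,\thet^{-n}\max\{|\eps_n|,\ldots,|\eps_{n+d+2}|\}$ for $n\ge n_2$, with, say, $C_2 = (\thet+1)(1+4C_1)(1+\thet)^{d-2}b_2^{-(d-2)}\thet^{-3}$, a constant depending only on $H$ (recall $r$ and hence $C_1$ from Lemma~\ref{lem-tech} depend only on $\thet$, $b_1$, $d$). There is no essential obstacle here: this is the bookkeeping step of the Erd\H{o}s--Kahane scheme, and the only point requiring a little care is choosing $n_2$ large enough that the geometric growth $\thet^{n+3}$ of $|P_{n+3}|$ dominates the bounded perturbation incurred by replacing $\wt A^{(d-2)},\wt Y$ with $A^{(d-2)}, Y$ in the denominator.
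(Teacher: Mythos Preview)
Your proof is correct and follows essentially the same approach as the paper: both write $\theta_{d-1}$ and $\Phi_{\thb'}$ as quotients of the complex quantities $\wt A_{m}^{(d-2)}+i\wt Y_m$ and $A_{m}^{(d-2)}+iY_m$ (your $P_m, P_m'$ are the paper's $\wt B_m, B_m$), bound the perturbations via \eqref{bound101} and \eqref{bound4}, bound $|P_{n+3}|$ below by $2b_2^{d-2}\thet^{n+3}$ via \eqref{form2}, and combine. You are a bit more explicit about the algebraic identity and the resulting constant, whereas the paper simply says the estimate ``follows easily'' from \eqref{bound202} and \eqref{bound5}.
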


\begin{proof} Denote
$$
B_n:= A_n^{(d-2)} + iY_n\ \ \ \mbox{and}\ \ \ \wt B_n := \wt A_n^{(d-2)} + i\wt Y_n.
$$
Then \eqref{erdo1} implies
\beq\label{eq303}
\bigl| \theta_{d-1} - \Phi_{\thb'}(K_n,\ldots,K_{n+d+2})\bigr|  = \left| \theta_{d-1} -\frac{B_{n+4}}{B_{n+3}}\right|=
  \left| \frac{\wt B_{n+4}}{\wt B_{n+3}} - \frac{B_{n+4}}{B_{n+3}}\right|.
\eeq
Observe that
\beq \label{bound202}
|B_{n+3} - \wt B_{n+3}| \le (1+4C_1)\cdot(1+\thet)^{d-2} \max\{|\eps_n|,\ldots, |\eps_{n+d+1}|\},\ \ n\ge n_1,
\eeq
by \eqref{bound101} and \eqref{bound4}, whereas
\beq \label{bound5}
2 b_2^{d-2} \thet^{n+3} \le |\wt B_{n+3}| = \bigl|\wt A_{n+3}^{(d-2)} + i \wt Y_{n+3}\bigr| \le 2\cdot(2\thet)^{d-1} \thet^{n+3},\ \ n\ge n_1,
\eeq
in view of \eqref{form2}. Now the desired estimate follows easily.
\end{proof}

Analogously, given $K_n,\ldots,K_{n+d+2}$, we can make a good ``prediction'' of $K_{n+d+3}$. Indeed,
$$
\theta_{d-1}= \frac{\wt A_{n+5}^{(d-2)} + i \wt Y_{n+4}}{\wt A_{n+4}^{(d-2)} + i \wt Y_{n+4}}  =  \frac{\wt A_{n+4}^{(d-2)} + i \wt Y_{n+4}}{\wt A_{n+3}^{(d-2)} + i \wt Y_{n+3}} 
$$
yields
\begin{eqnarray*}
\wt A_{n+5}^{(d-2)} = \re\left[ \frac{(\wt A_{n+4}^{(d-2)} + i \wt Y_{n+4})^2}{\wt A_{n+3}^{(d-2)} + i \wt Y_{n+3}}\right],
\end{eqnarray*}
hence
\begin{eqnarray} \label{eq311}
A_{n+5}^{(d-2)} \approx \re\left[ \frac{( A_{n+4}^{(d-2)} + i Y_{n+4})^2}{ A_{n+3}^{(d-2)} + i Y_{n+3}}\right] & = &
\re\left[ \frac{\Bigl(A_{n+4}^{(d-2)} + i G\bigl(A_{n+1}^{(d-2)}, \ldots, A_{n+4}^{(d-2)}\bigr)\Bigr)^2}{A_{n+3}^{(d-2)} + i G\bigl(A_{n}^{(d-2)}, \ldots, A_{n+3}^{(d-2)}\bigr)} \right], \\[1.2ex]
& =: & \Psi_{\thb'}(K_n,\ldots,K_{n+d+2}), \nonumber
\end{eqnarray}
for another  function, $\Psi_{\thb'}$, of $(d+3)$ variables, depending on the parameters $\thb'$. In view of \eqref{form0}, this yields
\begin{eqnarray} \nonumber
K_{n+d+3} & =           & A_{n+5}^{(d-2)} - \sum_{k=1}^{d-2} (-1)^k \sig_k (\theta_1,\ldots,\theta_j) K_{n+(d+3)-k} \\
 & \approx & \Psi_{\thb'}(K_n,\ldots,K_{n+d+2}) - \sum_{k=1}^{d-2} (-1)^k \sig_k (\theta_1,\ldots,\theta_j) K_{n+(d+3)-k} \label{erdo2} \\
                  & =:          & \Xi_{\thb'}(K_n,\ldots,K_{n+d+2}), \nonumber
\end{eqnarray}
for yet another function, $\Xi_{\thb'}$, of $(d+3)$ variables, depending on the parameters $\thb'$. 
The next lemma makes  this approximation precise.

\begin{lemma} \label{lem:erd1}
There exist $C_3 = C_3(H)$ and $n_3 = n_3(H)\ge n_2(H)$, such that 
$$
|K_{n+d+3} - \Xi_{\thb'}(K_n,\ldots,K_{n+d+2})| \le C_3\cdot  \max\{|\eps_n|,\ldots, |\eps_{n+d+3}|\},\ \ \ n\ge n_3.
$$
\end{lemma}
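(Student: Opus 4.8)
The plan is to reduce the claim, via the defining identity \eqref{form0} for $K_{n+d+3}$, to an estimate for $A_{n+5}^{(d-2)}$, and then to transfer the \emph{exact} relation $\wt A_{n+5}^{(d-2)} = \re\bigl(\wt B_{n+4}^2/\wt B_{n+3}\bigr)$ (valid because $\wt B_m$ is a geometric sequence with ratio $\theta_{d-1}$) to the perturbed quantities $B_m := A_m^{(d-2)} + iY_m$, controlling every error term by $\max\{|\eps_n|,\dots,|\eps_{n+d+3}|\}$. This is the same template already used for Lemma~\ref{lem:erd101}.

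\emph{Step 1: reduction to $A_{n+5}^{(d-2)}$.} Applying \eqref{form0} with $n$ replaced by $n+5$ gives $K_{n+d+3} = A_{n+5}^{(d-2)} - \sum_{k=1}^{d-2}(-1)^k\sig_k(\theta_1,\dots,\theta_{d-2})K_{n+d+3-k}$. Since $\Xi_{\thb'}$ was defined in \eqref{erdo2} by subtracting exactly this linear combination of the $K_{n+d+3-k}$ from $\Psi_{\thb'}(K_n,\dots,K_{n+d+2})$, the two sums cancel and $K_{n+d+3} - \Xi_{\thb'}(K_n,\dots,K_{n+d+2}) = A_{n+5}^{(d-2)} - \Psi_{\thb'}(K_n,\dots,K_{n+d+2})$. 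So it suffices to bound the right-hand side, where by \eqref{eq311} one has $\Psi_{\thb'}(K_n,\dots,K_{n+d+2}) = \re\bigl(B_{n+4}^2/B_{n+3}\bigr)$.

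\emph{Step 2: the exact relation for the tilde quantities.} By \eqref{form2}, $\wt B_m = 2\tau_{d-1}\prod_{k=1}^{d-2}(\theta_{d-1}-\theta_k)\theta_{d-1}^m$ for $m\ge n_1+3$, hence $\wt B_m$ is geometric with ratio $\theta_{d-1}$; in particular $\wt B_{n+5} = \theta_{d-1}^2\wt B_{n+3} = \wt B_{n+4}^2/\wt B_{n+3}$, so $\wt A_{n+5}^{(d-2)} = \re\bigl(\wt B_{n+4}^2/\wt B_{n+3}\bigr)$. Combining with \eqref{bound1} for $j=d-2$ (with $n$ shifted to $n+5$), $\bigl|A_{n+5}^{(d-2)} - \re(\wt B_{n+4}^2/\wt B_{n+3})\bigr| = \bigl|A_{n+5}^{(d-2)} - \wt A_{n+5}^{(d-2)}\bigr| \le (1+\thet)^{d-2}\max\{|\eps_{n+5}|,\dots,|\eps_{n+d+3}|\}$. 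Thus it remains to compare $\re\bigl(\wt B_{n+4}^2/\wt B_{n+3}\bigr)$ with $\re\bigl(B_{n+4}^2/B_{n+3}\bigr)$.

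\emph{Step 3: the perturbation estimate.} Write $B_m = \wt B_m - \Delta_m$ with $|\Delta_{n+3}| \le C\max\{|\eps_n|,\dots,|\eps_{n+d+1}|\}$ and $|\Delta_{n+4}| \le C\max\{|\eps_{n+1}|,\dots,|\eps_{n+d+2}|\}$, $C=(1+4C_1)(1+\thet)^{d-2}$, by \eqref{bound202} and its analog with $n$ replaced by $n+1$. From \eqref{bound5} we have $|\wt B_m| \in [2b_2^{d-2}\thet^m,\ 2(2\thet)^{d-1}\thet^m]$ for $m\ge n_1+3$; since $|\Delta_m|\le C/2$, it follows that for $n\ge n_3(H)$ large enough $|B_m|$ lies in $[b_2^{d-2}\thet^m,\ 3(2\thet)^{d-1}\thet^m]$ as well, so $|B_{n+4}|/|B_{n+3}| = O_H(1)$ and the denominators do not vanish. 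Then one expands
$$
\frac{\wt B_{n+4}^2}{\wt B_{n+3}} - \frac{B_{n+4}^2}{B_{n+3}} = \frac{(\wt B_{n+4}-B_{n+4})(\wt B_{n+4}+B_{n+4})}{\wt B_{n+3}} + B_{n+4}^2\Bigl(\frac{1}{\wt B_{n+3}} - \frac{1}{B_{n+3}}\Bigr),
$$
where the first term is $\le |\Delta_{n+4}|(|\wt B_{n+4}|+|B_{n+4}|)/|\wt B_{n+3}| = O_H(|\Delta_{n+4}|)$ and the second is $|B_{n+4}|^2|\Delta_{n+3}|/(|\wt B_{n+3}||B_{n+3}|) = O_H(|\Delta_{n+3}|)$, because all moduli of index $m$ are comparable to $\thet^m$, so the $\thet^{2n}$ in the numerators is exactly balanced by the $\thet^n$ in the denominators. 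Collecting this with Step 2 yields $\bigl|A_{n+5}^{(d-2)} - \Psi_{\thb'}(K_n,\dots,K_{n+d+2})\bigr| \le C_3(H)\max\{|\eps_n|,\dots,|\eps_{n+d+3}|\}$ for $n\ge n_3(H)$, which is the assertion.

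\emph{Main obstacle.} The only delicate point is the bookkeeping in Step 3: one must verify that after dividing, no residual growth in $n$ survives, i.e.\ that each surviving error term effectively carries a factor $\thet^{-n}$ relative to the $\thet^n$-sized $B_{n+3}, B_{n+4}$. This is guaranteed by the two-sided bounds on $|\wt B_m|$ (hence on $|B_m|$) coming from \eqref{bound5}, exactly as in the proof of Lemma~\ref{lem:erd101}, and it is precisely what forces the choice of a sufficiently large $n_3(H)$ so that the denominators $B_{n+3}, \wt B_{n+3}$ stay bounded away from $0$.
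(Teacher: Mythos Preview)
Your proof is correct and follows the same overall template as the paper. The only difference is organizational: instead of your Step~2--3 split (compare $A_{n+5}^{(d-2)}$ to $\wt A_{n+5}^{(d-2)}$, then $\wt B_{n+4}^2/\wt B_{n+3}$ to $B_{n+4}^2/B_{n+3}$), the paper writes $A_{n+5}^{(d-2)} = \re(B_{n+5})$, bounds by $|B_{n+5} - B_{n+4}^2/B_{n+3}| = |B_{n+4}|\cdot\bigl|B_{n+5}/B_{n+4} - B_{n+4}/B_{n+3}\bigr|$, and then invokes \eqref{bound404} twice (at $n$ and $n+1$) together with \eqref{bound5}; this reuses Lemma~\ref{lem:erd101} directly rather than redoing the perturbation computation, but the underlying estimates are identical.
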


\begin{proof}  We have by \eqref{eq311}:
\begin{eqnarray*}
|K_{n+d+3} - \Xi_{\thb'}(K_n,\ldots,K_{n+d+2})| & = & \left| A_{n+5}^{(d-2)} - \re\bigl(B_{n+4}^2/B_{n+3}\bigr) \right| \\
& \le & \left| B_{n+5} - \frac{B_{n+4}^2}{B_{n+3}}\right| = |B_{n+4}|\cdot \left| \frac{B_{n+5}}{B_{n+4}} - \frac{B_{n+4}}{B_{n+3}}\right|,
\end{eqnarray*}
and the desired estimate follows from \eqref{eq303}, \eqref{bound404}, and \eqref{bound5}.
\end{proof}

\subsection{Conclusion of the proof}

Let
\beq \label{def-rho}
\rho:= (2C_3)^{-1}\ \ \ \mbox{and}\ \ \ M:= 2C_3 + 1.
\eeq
Lemma~\ref{lem:erd1}  immediately implies the following

\begin{lemma} \label{lem:erd2}
Consider an arbitrary $\theta_{d-1} \in \C$, such that $(\thb',\theta_{d-1},\ov\theta_{d-1})\in H$,  and $\tau = (\tau_1,\ldots,\tau_d)\in \C^{2s}$, 
such that  $\tau_{2j} = \ov\tau_{2j-1}$ for
$j\in [s]$, with $1\le |\tau_{d-1}|= \|\tau\|_\infty\le \thet$. Define the corresponding sequences $K_n, \eps_n$ by \eqref{def-Kn}.
Then the following hold, independent of $\theta_{d-1}$:
\begin{enumerate}
\item[{\bf (i)}] for any $n\ge n_3(H)$, such that $\max\{|\eps_n|,\ldots, |\eps_{n+d+3}|\}<\rho$,
 the number $K_{n+d+3}$ is uniquely determined by $(K_j)_{j=n}^{n+d+2}$.
\item[{\bf (ii)}] for any $n\ge n_3(H)$  there are at most $M$ choices of $K_{n+d+3}$ given $(K_j)_{j=n}^{n+d+2}$.
\end{enumerate}
\end{lemma}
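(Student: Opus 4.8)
The plan is to deduce both parts directly from Lemma~\ref{lem:erd1}, the only extra ingredients being that each $K_n$ is an integer and that $|\eps_n|\le\half$ for every $n$, both immediate from the definition \eqref{def-Kn} of $K_n$ as a nearest integer. Note first that since the eigenvalues $\theta_1,\ldots,\theta_{d-2}$ come in complex-conjugate pairs, the elementary symmetric functions of $\theta_1,\ldots,\theta_{d-2}$ occurring in \eqref{form0} are real; together with the fact that $\Psi_{\thb'}$ in \eqref{eq311} is defined via a real part, this shows that $\Xi_{\thb'}$ takes real values on real inputs, so it makes sense to speak of the integer closest to $\Xi_{\thb'}(K_n,\ldots,K_{n+d+2})$.

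For part (i), assume $n\ge n_3(H)$ and $\max\{|\eps_n|,\ldots,|\eps_{n+d+3}|\}<\rho$. By \eqref{def-rho} we have $\rho=(2C_3)^{-1}$, so Lemma~\ref{lem:erd1} gives
\[
\bigl|K_{n+d+3}-\Xi_{\thb'}(K_n,\ldots,K_{n+d+2})\bigr|\ \le\ C_3\cdot\max\{|\eps_n|,\ldots,|\eps_{n+d+3}|\}\ <\ C_3\rho\ =\ \half .
\]
Since $K_{n+d+3}\in\Z$, it is then the unique integer at distance strictly less than $\half$ from the real number $\Xi_{\thb'}(K_n,\ldots,K_{n+d+2})$; hence $K_{n+d+3}$ is completely determined by $(K_j)_{j=n}^{n+d+2}$ (together with the fixed data $\thb'$), which is the assertion.

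For part (ii), drop the smallness hypothesis but keep the bound $|\eps_j|\le\half$, valid for all $j$. Then for $n\ge n_3(H)$ the same inequality yields $\bigl|K_{n+d+3}-\Xi_{\thb'}(K_n,\ldots,K_{n+d+2})\bigr|\le C_3/2$, so $K_{n+d+3}$ lies in a real interval of length $C_3$ centred at $\Xi_{\thb'}(K_n,\ldots,K_{n+d+2})$, and such an interval contains at most $\lfloor C_3\rfloor+1\le 2C_3+1=M$ integers, again by \eqref{def-rho}. That everything is ``independent of $\theta_{d-1}$'' is built in, because the function $\Xi_{\thb'}$ and the constants $C_3,n_3$ depend only on $H$ and on the fixed coordinates $\thb'$, never on the undetermined eigenvalue (nor on $\tau$). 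I do not expect a genuine obstacle at this step: the delicate analysis has already been carried out in Lemma~\ref{lem:erd1}, and what remains is the arithmetic bookkeeping of \eqref{def-rho}, calibrated precisely so that the perturbation bound collapses $K_{n+d+3}$ to a single value in case (i) and to at most $M$ values in case (ii).
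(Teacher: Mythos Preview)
Your proof is correct and follows exactly the approach the paper intends: the paper simply states that Lemma~\ref{lem:erd1} ``immediately implies'' Lemma~\ref{lem:erd2}, and you have spelled out the routine arithmetic behind that implication using the calibration \eqref{def-rho}. Your observation that $\Xi_{\thb'}$ is real-valued (since the symmetric functions of $\thb'$ are real and $\Psi_{\thb'}$ is a real part) is a point the paper leaves implicit.
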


Now we can finish the proof of Proposition~\ref{propa-EK0}. Suppose that $N\ge n_3(H)$. 
Further, fix $\theta_{d-1}\in E_{H,N}(\delta,\rho)$, with $\rho$ from \eqref{def-rho} and $\delta>0$ arbitrary. By definition, there exists
$\tau = (\tau_1,\ldots,\tau_d)\in \C^d$, such that $\tau_{2j} = 
\ov\tau_{2j-1}\in \C$ for $j\in [s]$, with $1\le |\tau_{d-1}|= \|\tau\|_\infty\le \vartheta$, satisfying
$$
\Bigl|\Bigl\{n\in [N]:\ \Big\|\sum_{j=1}^{d} \tau_j \theta_j^n \Bigr\| < \rho\Bigr\}\Bigr| \ge (1-\delta)N.
$$
Since $\thb\in H$, there are $O_H(1)$ choices for the initial part of the sequence $K_1,\ldots, K_{n_3}$. The set $J := \{n\in [N]:\ |\eps_n|\ge \rho\}$ has cardinality at most 
$\lfloor \delta N \rfloor$, by construction. In view of Lemma~\ref{lem:erd2}, given $J$, there are at most $O_{H}(M^{\delta (d+4)N})$ choices for the sequence $K_1,\ldots,
K_{N+d+2}$. By \eqref{bound404}, the minimal number of disks of radius $O_{H}(\thet^{-N})$ needed to cover $E_{H,N}(\delta,\rho)$, is at most
$$
O_{H}(M^{\delta (d+4)N})\cdot {N\choose \lfloor \delta N \rfloor} = \exp(O_{H}(\delta \log(1/\delta)N)),
$$
as desired. This completes the proof of Proposition~\ref{propa-EK0}, and hence also of Proposition~\ref{prop-dim0}, Theorem~\ref{th:tech}, and
Theorem~\ref{thm2}.
\end{proof}

\subsection*{Acknowledgements} I am grateful to Pablo Shmerkin for many helpful discussions, especially on the issues related to his paper
\cite{CoShm24} with Emilio Corso.

\bibliographystyle{plain}
\bibliography{nonunif}

\end{document}